\theoremstyle{plain}  
\newtheorem{thm}{Theorem}
\newtheorem{con}[thm]{Conjecture}
\newtheorem{cor}[thm]{Corollary}
\newtheorem{lem}[thm]{Lemma}
\newtheorem{prop}[thm]{Proposition}
\theoremstyle{definition}
\newtheorem{df}[thm]{Definition}
\theoremstyle{remark}
\DeclareMathOperator{\cA}{\mathcal{A}}
\DeclareMathOperator{\cB}{\mathcal{B}}
\DeclareMathOperator{\cC}{\mathcal{C}}
\DeclareMathOperator{\calD}{\mathcal{D}}
\DeclareMathOperator{\cE}{\mathcal{E}}
\DeclareMathOperator{\calH}{\mathcal{H}}
\DeclareMathOperator{\cI}{\mathcal{I}}
\DeclareMathOperator{\calL}{\mathcal{L}}
\DeclareMathOperator{\cM}{\mathcal{M}}
\DeclareMathOperator{\calR}{\mathcal{R}}
\DeclareMathOperator{\cT}{\mathcal{T}}
\DeclareMathOperator{\cU}{\mathcal{U}}
\DeclareMathOperator{\bbC}{\mathbb{C}}
\DeclareMathOperator{\bbE}{\mathbb{E}}
\DeclareMathOperator{\bbF}{\mathbb{F}}
\DeclareMathOperator{\bbH}{\mathbb{H}}
\DeclareMathOperator{\bbK}{\mathbb{K}}
\DeclareMathOperator{\bbL}{\mathbb{L}}
\DeclareMathOperator{\bbQ}{\mathbb{Q}}
\DeclareMathOperator{\bbR}{\mathbb{R}}
\DeclareMathOperator{\bbS}{\mathbb{S}}
\DeclareMathOperator{\bbT}{\mathbb{T}}
\DeclareMathOperator{\bbZ}{\mathbb{Z}}
\def\sn{\smallskip\noindent}
\def\mn{\medskip\noindent}
\def\enumidef{\renewcommand{\labelenumi}{$\mathrm{(\roman{enumi})}$}}
\newcommand{\Ab}{\operatorname{\bf Ab}}
\newcommand{\BD}{\operatorname{BD}}
\newcommand{\Betti}{\operatorname{Betti}}
\newcommand{\bfone}{\mathbf{1}}
\newcommand{\CH}{\operatorname{CH}}
\newcommand{\Ch}{\operatorname{\bf Ch}}
\newcommand{\ch}{\operatorname{ch}}
\newcommand{\cl}{\operatorname{cl}}
\newcommand{\Cone}{\operatorname{Cone}}
\newcommand{\crys}{\operatorname{crys}}
\newcommand{\dg}{\operatorname{dg}}
\newcommand{\dgCat}{\operatorname{\bf dgCat}}
\newcommand{\EM}{\operatorname{EM}}
\newcommand{\etale}{\operatorname{\text{\'e}t}}
\newcommand{\ev}{\operatorname{ev}}
\newcommand{\Fr}{\operatorname{Fr}}
\newcommand{\Fun}{\operatorname{Fun}}
\newcommand{\Gal}{\operatorname{Gal}}
\newcommand{\Hdg}{\operatorname{Hdg}}
\newcommand{\HK}{\operatorname{HK}}
\newcommand{\HP}{\operatorname{HP}}
\newcommand{\HN}{\operatorname{HN}}
\newcommand{\Ho}{\operatorname{Ho}}
\newcommand{\Hom}{\operatorname{Hom}}
\newcommand{\Homo}{\operatorname{H}}
\newcommand{\id}{\operatorname{id}}
\newcommand{\im}{\operatorname{Im}}
\newcommand{\Ind}{\operatorname{Ind}}
\newcommand{\isoto}{\overset{\scriptstyle{\sim}}{\to}}
\newcommand{\Jac}{\operatorname{Jac}}
\newcommand{\Ker}{\operatorname{Ker}}
\newcommand{\KGL}{\operatorname{KGL}}
\newcommand{\KM}{\operatorname{KM}}
\newcommand{\loc}{\operatorname{loc}}
\newcommand{\Mod}{\operatorname{\bf Mod}}
\newcommand{\Mot}{\mathcal{M}\!ot}
\newcommand{\NMM}{\operatorname{NMM}}
\newcommand{\NM}{\operatorname{NM}}
\newcommand{\Nil}{\operatorname{Nil}}
\newcommand{\nil}{\operatorname{nil}}
\newcommand{\Num}{\operatorname{Num}}
\newcommand{\num}{\operatorname{num}}
\newcommand{\Ob}{\operatorname{Ob}}
\newcommand{\onto}[1]{\stackrel{#1}{\to}}
\newcommand{\op}{\operatorname{op}}
\newcommand{\Perf}{\operatorname{Perf}}
\newcommand{\pr}{\operatorname{pr}}
\newcommand{\ProjSm}{\operatorname{ProjSm}}
\newcommand{\Qcoh}{\operatorname{\bf Qcoh}}
\newcommand{\SH}{\mathcal{SH}}
\newcommand{\Sh}{\mathcal{S}h}
\newcommand{\Spec}{\operatorname{Spec}}
\newcommand{\Sp}{\operatorname{\bf Sp}}
\newcommand{\Spt}{\operatorname{\bf Spt}}
\newcommand{\topo}{\operatorname{top}}
\newcommand{\tr}{\operatorname{Tr}}
\title{Cycle maps on cohomology theories for dg-categories and their applications}
\date{}
\author{Satoshi Mochizuki}
\begin{document}

\maketitle

\begin{abstract}
In this article, we propose noncommutative versions of 
Tate conjecture and Hodge conjecture. 
If we consider these conjectures for a dg-category of perfect complexes 
over a certain schemes $X$, then they are equivalent to 
the classical Tate and Hodge conjectures for $X$ respectively. 
We also propose a strategy of how to prove these conjectures 
by utilizing a version of motivic Bass conjecture.  
\end{abstract}

\section*{Introduction}

For a projective and smooth variety $X$ over a field $k$, the 
$i$-th Chow group $\CH^i(X)$ is generated by cycles of codimension $i$ 
on $X$ modulo rational equivalence. 

\sn
$\mathrm{(i)}$ 
If $k=\bbC$ the field of complex numbers, then 
we can define 
the cycle class morphism from the $i$-th Chow group into 
the $2i$-th Betti cohomology group with rational 
coefficients 
$\rho^i(X)_{\bbQ} \colon \CH^i(X) \otimes \bbQ \to 
\Homo^{2i}(X,\bbQ)$. 
Since $X$ is a K\"ahler manifold, there exists a decomposition 
of its cohomology with complex coefficients 
$$\Homo^k(X,\bbC)=\bigoplus_{p+q=k}\Homo^{p,q}(X)$$
where $\Homo^{p,q}(X)$ is the subgroups of cohomology classes which 
are represented by harmonic form of type $(p,q)$. 
We set $\Hdg^i(X):=\Homo^{2i}(X,\bbQ)\cap \Homo^{i,i}(X)(\subset \Homo^{2i}(X,\bbC))$ and we call it the {\it group of Hodge classes of degree $2i$ on $X$}. 
We can show that $\im \rho^i(X)_{\bbQ}\subset \Hdg^i(X)$. 
In \cite{Hod50}, 
Hodge conjectured the following:

\begin{con}[\bf Hodge conjecture]
\label{con:Hodge conjecture}
For each $i$, we have an equality 
$\im \rho^i(X)_{\bbQ} =\Hdg^i(X)$. 
\end{con}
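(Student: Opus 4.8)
The plan is to run the classical reductions as far as they go and then to confront the one point that is the real content of the statement. First I would pass to primitive classes of low degree. Writing $L$ for cup product with the class of a smooth hyperplane section and $n=\dim X$, Hard Lefschetz gives the isomorphism of rational Hodge structures $L^{n-2i}\colon \Homo^{2i}(X,\bbQ)\isoto\Homo^{2(n-i)}(X,\bbQ)$ together with the Lefschetz decomposition $\Homo^{2i}(X,\bbQ)=\bigoplus_{j\ge 0}L^{j}\Homo^{2(i-j)}_{\mathrm{prim}}(X,\bbQ)$; since $L$ and its powers carry Hodge classes to Hodge classes and algebraic classes to algebraic classes (intersect with a linear subspace), a straightforward argument with this decomposition reduces the asserted equality $\im\rho^i(X)_{\bbQ}=\Hdg^i(X)$, for all $X$ and all $i$, to the single inclusion $\Hdg^i_{\mathrm{prim}}(X)\subseteq\im\rho^i(X)_{\bbQ}$ in the range $2i\le n$. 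The reverse inclusion $\im\rho^i(X)_{\bbQ}\subseteq\Hdg^i(X)$ requires nothing new: it is the classical compatibility of the cycle map with the Hodge filtration, i.e.\ that the class of a codimension-$i$ subvariety is rational and of type $(i,i)$.

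Next, fix such a primitive Hodge class $\alpha\in\Homo^{2i}(X,\bbQ)\cap\Homo^{i,i}(X)$ with $2i\le n$ and spread it out: choose a smooth projective family $\mathcal{X}\to S$ over a smooth quasi-projective base, with $\mathcal{X}_{s_0}\cong X$ and $S$ as large as possible (an open locus in a suitable Hilbert scheme), and let $S_\alpha\subseteq S$ be the Hodge locus of $\alpha$, the locus over which the flat transport of $\alpha$ stays of type $(i,i)$. By the theorem of Cattani--Deligne--Kaplan, $S_\alpha$ is an algebraic subvariety of $S$ (and one may further hope to exploit that its components are defined over $\bar{\bbQ}$ when $S$ is). One would then like to argue inside $S_\alpha$: the subset $S_\alpha^{\mathrm{alg}}$ over which the transported class is algebraic is a countable union of closed subsets, the images under the proper projections of those unions of connected components of the relative Hilbert schemes over which the universal cycle has the correct, locally constant, cohomology class. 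If one could produce a cycle at a single point of $S_\alpha$ and propagate algebraicity throughout $S_\alpha$, one would obtain $s_0\in S_\alpha^{\mathrm{alg}}$, hence the theorem. Nonemptiness of $S_\alpha^{\mathrm{alg}}$ I would try to force by choosing the family so that $S_\alpha$ meets a fibre where the class is visibly algebraic --- a variety of maximal Picard number when $i=1$, a self-product or a variety with many correspondences when $i\ge 2$.

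The hard part, and I want to be frank that this is not a technical gap but essentially all of Hodge's conjecture, is exactly that propagation step, equivalently the production of a single algebraic cycle from a Hodge class with no deformation input available. For $i=1$ the exponential sequence $0\to\bbZ\to\cO_X\to\cO_X^{\times}\to 0$ rescues the argument outright, identifying $\Hdg^1(X)$ with $\mathrm{NS}(X)\otimes\bbQ$ and hence with divisor classes (the Lefschetz $(1,1)$ theorem), so the whole family construction is superfluous. For $i\ge 2$ there is no analogous sequence; the intermediate Jacobians and Griffiths' normal functions that would have to play its role are not known to be algebraic enough to force propagation; indeed even the openness of $S_\alpha^{\mathrm{alg}}$ in $S_\alpha$ is unclear, since a cycle found at one point need not deform over a neighbourhood in $S_\alpha$, and a countable union of proper closed subvarieties can still miss $s_0$. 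Every theorem pointing towards Hodge's conjecture --- Deligne's absolute Hodge cycles on abelian varieties, the abelian- and $K3$-type Shimura cases, certain cubic fourfolds, surfaces of large Picard rank --- succeeds precisely by sidestepping this step rather than executing it. So I expect any honest attempt along these lines to reduce to extracting, out of the periods of $\alpha$, the mixed Hodge extension it spans, and whatever arithmetic structure Cattani--Deligne--Kaplan supplies, an actual subvariety or correspondence; I have no unconditional mechanism for that, which is why in the body of the paper we abandon the classical route and reformulate the problem for the dg-category $\Perf(X)$, where a version of the motivic Bass conjecture becomes the lever.
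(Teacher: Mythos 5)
Your proposal is not a proof, and you say so yourself: the Lefschetz/primitive-class reduction and the spreading-out over the Hodge locus leave untouched the one step --- producing a single algebraic cycle representing a primitive Hodge class of codimension $i\geq 2$ and propagating algebraicity along the Cattani--Deligne--Kaplan locus --- which is the entire content of the conjecture. Nothing in your argument supplies the nonemptiness of $S_\alpha^{\mathrm{alg}}$, nor its containing $s_0$, and as you note neither the exponential sequence nor normal functions extend to $i\geq 2$ to force that step. So the attempt ends exactly where the statement begins; this is a genuine gap, not a technical omission, and no amount of refinement of the Lefschetz decomposition or of the choice of family repairs it.

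It is also not the route this paper takes: the paper never argues inside classical Hodge theory at all. It first shows (Lemma~\ref{lem:comatibility of notations}, Proposition~\ref{prop:compatibility of Chern character}, Corollary~\ref{cor:Hodge equivalence}) that the Hodge conjecture for $X$ is equivalent to the noncommutative Hodge conjecture~\ref{con:noncommutative Hodge conjecture} for the saturated dg-category $\Perf_{\dg}X$, i.e.\ surjectivity of ${K_0(\Perf_{\dg}X)}_{\bbQ}\to{\Hdg(\Perf_{\dg}X)}_{\bbQ}$, and then invokes Theorem~\ref{thm:motivic homotopy type of saturated dg-category} (the motivic Bass-type statement quoted from \cite{MY20}), by which $\cU(\Perf_{\dg}X)$ is a direct summand of $\bfone^{\oplus n}$ in $\Ho(\Mot_{\bbC})$; since surjectivity of the cycle map is stable under direct sums and direct summands, everything reduces to $\cA=\underline{\bbC}$, where the assertion is immediate, giving Corollary~\ref{cor:Hodge conjecture}. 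In other words, the whole burden is carried by Theorem~\ref{thm:motivic homotopy type of saturated dg-category} together with Corollary~\ref{cor:Hodge equivalence}, which you mention only in your closing sentence and never use; if you want to complete an argument within this paper's framework, that reduction is the missing ingredient, not anything from Lefschetz pencils, Hodge loci, or absolute Hodge cycles. Conversely, you should be aware that the force of the paper's conclusion rests entirely on that cited theorem rather than on any classical input of the kind you assembled.
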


\sn
$\mathrm{(ii)}$ 
If a prime $l$ is invertible in $k$, we can define 
the cycle class morphism from the $i$-th Chow group into 
the $2i$-th $l$-adic {\'e}tale cohomology group with $i$-th Tate twist 
coefficient 
$\rho^i(X)_{\bbQ} \colon \CH^i(X) \otimes \bbQ \to 
\Homo^{2i}_{\etale}(X,\bbQ_l(i))$. 
It can extends to $\bbQ_i$-linear morphism 
$\rho^i(X)_{\bbQ_l} \colon \CH^i(X) \otimes \bbQ_l 
\to \Homo^{2k}_{\etale}(X,\bbQ_l(i))$. 
In \cite{Tat65}, Tate conjectured following:

\begin{con}[\bf Tate conjecture]
\label{con:Tate conjecture}
For any $i$, the image of the cycle class morphism 
$\rho^i{(X\times_k\Spec \bar{k})}_{\bbQ_l}$ in 
the cohomology group $\Homo^{2i}_{\etale}(X\times_k\Spec \bar{k},\bbQ_i(i))$ 
is exactly the union of fixed parts of 
open subgroups of the Galois group $\Gal(\bar{k}/k)$.
\end{con}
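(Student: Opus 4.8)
\medskip\noindent
\emph{Proposed strategy.}
Since Conjecture~\ref{con:Tate conjecture} is open, what follows is not a proof but the line of attack that this article develops. The plan is to factor the cycle class morphism through noncommutative invariants of the dg-category $\Perf(X)$ and to reduce the surjectivity statement to a finite generation property --- a version of the motivic Bass conjecture --- for the relevant motivic homotopy groups. Write $\bar X=X\times_k\Spec\bar k$. First I would recall that $\rho^i(\bar X)_{\bbQ_l}$ is the composite of the canonical map of $\CH^i(\bar X)\otimes\bbQ_l$ onto the weight-$i$ eigenspace $K_0(\bar X)^{(i)}\otimes\bbQ_l$ for the Adams operations with the $l$-adic Chern character realization into $\Homo^{2i}_{\etale}(\bar X,\bbQ_l(i))$. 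The gain of the noncommutative reformulation is that $K_0(\bar X)$, its Adams decomposition, and the $l$-adic realization depend only on $\Perf(\bar X)$ as an object of the category of noncommutative motives, so the whole diagram acquires functorial descent and continuity along $\Gal(\bar k/k)$.

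The key steps, in order, would be the following. $\mathrm{(1)}$ Identify $\Homo^{2i}_{\etale}(\bar X,\bbQ_l(i))$ with the appropriate graded piece of the $l$-adic realization of the noncommutative motive of $\Perf(\bar X)$, carrying its continuous $\Gal(\bar k/k)$-action, and check that the union of the fixed parts of open subgroups is precisely the set of classes defined over a finite subextension. $\mathrm{(2)}$ Prove that the noncommutative cycle map of this article, composed with that identification, recovers the classical $\rho^i(\bar X)_{\bbQ_l}$; this is a compatibility of Chern characters which I expect to be formal, given the constructions below, and which already reduces the conjecture to its noncommutative counterpart for $\Perf(X)$. $\mathrm{(3)}$ Given $\alpha\in\Homo^{2i}_{\etale}(\bar X,\bbQ_l(i))$ fixed by an open subgroup, so defined over a finite subextension $k'/k$, spread $X\times_k\Spec k'$ out to a scheme $X'$ of finite type over $\bbZ$ and invoke the motivic Bass conjecture for $\Perf(X')$ to conclude that $K_0(X')$ --- equivalently, the relevant weight-$i$ motivic cohomology group of $X'$ --- is finitely generated. $\mathrm{(4)}$ Run a weight argument, or a Hochschild--Serre and continuity argument over the open subgroups, to show that this finite generation forces $\alpha$, after tensoring with $\bbQ_l$ and possibly shrinking the open subgroup, into the image of algebraic cycles --- in the same way that the classical Tate conjecture is equivalent to the conjunction of a finiteness conjecture and semisimplicity of Frobenius.

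The hard part will be steps $\mathrm{(3)}$ and $\mathrm{(4)}$. The motivic Bass conjecture is itself far from known in the generality required, and even granting it one still needs the semisimplicity --- or at least injectivity on the Frobenius-invariant pieces --- that is provably equivalent to part of the Tate conjecture, so there is a genuine risk of circularity unless the noncommutative Bass input is strictly stronger than the statement we are after. I therefore expect the real content to be twofold: first, that the noncommutative formulation makes the reduction clean, replacing all the $l$-adic and Hodge-theoretic weight bookkeeping by a single finite generation statement about dg-categories smooth and proper over a base of finite type over $\bbZ$; and second, isolating precisely which case of the motivic Bass conjecture suffices, and checking both that it does not secretly re-encode the full Tate conjecture and that it is within reach of trace methods such as topological cyclic homology. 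That verification is where I expect the argument to stand or fall.
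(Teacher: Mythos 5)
Your steps $\mathrm{(1)}$ and $\mathrm{(2)}$ do match the paper: the identification $\pi_0(|r_l(\Perf_{\dg}X)|)\simeq \Homo^{2\ast}_{\etale}(X,\bbQ_l(\ast))$ together with $\HK(\Perf_{\dg}X)\simeq\bbK(X)$, the compatibility of the $l$-adic Chern character with the classical cycle class map via Proposition~\ref{prop:compatibility of Chern character}, and the resulting equivalence between the Tate conjecture for $X$ and its noncommutative counterpart for $\Perf_{\dg}X$ (Corollary~\ref{cor:Tate equivalence}). Where you diverge genuinely is in steps $\mathrm{(3)}$ and $\mathrm{(4)}$. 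The ``motivic Bass conjecture'' the paper invokes is not the finite-generation statement you describe (spreading out over $\bbZ$ and finite generation of $K_0$ or of motivic cohomology); it is Theorem~\ref{thm:motivic homotopy type of saturated dg-category}, the assertion that for \emph{every} saturated dg-category $\cA$ the motive $\cU(\cA)$ is a direct summand of $\bfone^{\oplus n}$ in $\Ho(\Mot_A)$. Since both $\HK$ and $|r_l(-)|$ factor through $\Mot_k$ and the noncommutative Tate statement is stable under direct sums and direct summands, the conjecture then collapses to the case $\cA=\underline{k}$, i.e.\ to $\Spec k$, where it is trivial. No spreading out, no Hochschild--Serre or weight argument, and no semisimplicity of Frobenius appear anywhere in the paper's reduction.

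The trade-off is worth spelling out. Your concern about circularity with semisimplicity simply does not arise in the paper's argument, because its input is not a finiteness statement that must be combined with an injectivity or semisimplicity statement; it is a structural splitting of the noncommutative motive itself, strong enough to make the reduction to a point immediate. The price is that this input is vastly stronger than the classical Bass conjecture you have in mind: it asserts that the noncommutative motive of every smooth projective variety is of unit type, and the cited precedent (Raedschelders--Stevenson) establishes this only for smooth proper \emph{connective} dg-algebras, a class that does not contain $\Perf_{\dg}X$ for a general projective smooth $X$. So your proposal and the paper's argument place the entire burden of proof in quite different places, and the verification you rightly identify as the point where ``the argument will stand or fall'' is, in the paper's version, the verification of Theorem~\ref{thm:motivic homotopy type of saturated dg-category} in the stated generality.
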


\sn
$\mathrm{(iii)}$ 
If $k$ is a finite field $\bbF_q$ of characteristic $p$ with $q=p^m$, 
then there exits a $p$-adic version of Tate conjecture \cite{Mil07}. 
Let $W(k)$ be the associated ring of $p$-typical Witt vectors and we 
set $K:=W(k)[1/p]$ the fraction field of $W(k)$ and 
$H^{\ast}_{\crys}(X):=H^{\ast}_{\crys}(X/W(k))\otimes_{W(k)}K$ the crystalline 
cohomology groups of $X$. 

\begin{con}[\bf $p$-adic Tate conjecture]
\label{con:p-adic Tate conjecture}
The cycle class map $\CH^{\ast}(X)\otimes \bbQ_p\to 
{H^{2\ast}_{\crys}(X)(\ast)}^{\Fr_p}$ 
with values in the $\bbQ_p$-vector subspace of 
those elements which are fixed by the 
crystalline Frobenius $\Fr_p$ is surjective. 
\end{con}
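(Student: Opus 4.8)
\mn
\textbf{Towards a proof.} We do not prove Conjecture~\ref{con:p-adic Tate conjecture} here; instead we indicate the strategy alluded to in the introduction, which proceeds by lifting the statement to the noncommutative world and reducing it there to a Bass-type finiteness statement. The first step is to replace $X$ by its dg-category $\Perf(X)$ of perfect complexes and to replace crystalline cohomology by a $p$-adic cohomology theory for dg-categories that carries a Frobenius: the natural candidate is $p$-completed topological periodic cyclic homology $\operatorname{TP}(-;\bbZ_p)$ together with its cyclotomic Frobenius $\varphi$. By the theorem of Bhatt--Morrow--Scholze, for smooth and proper $X$ over $k=\bbF_q$ there is a complete filtration on $\operatorname{TP}(\Perf(X);\bbZ_p)$ whose $i$-th graded piece is, up to twist and a shift by $2i$, the crystalline complex $R\Gamma_{\crys}(X/W(k))$, and under which $\varphi$ induces the crystalline Frobenius; rationally this identifies the relevant graded piece of $\operatorname{TP}_0(\Perf(X);\bbZ_p)^{\varphi}$ with $H^{2i}_{\crys}(X)(i)^{\Fr_p}$.

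\mn
The second step is to realize the cycle map in this framework as the cyclotomic trace $K_0(\cA)\to\operatorname{TC}(\cA;\bbZ_p)\to\operatorname{TP}(\cA;\bbZ_p)$ followed by projection to the degree-$i$ weight component, and to check that for $\cA=\Perf(X)$ it recovers $\rho^i(X)_{\bbQ_p}$ under the identification of the first step. This compatibility is where one uses multiplicativity of the trace, the Hochschild--Kostant--Rosenberg comparison, and the match between the weight (Adams) grading on $K$-theory and the Nygaard/motivic filtration on $\operatorname{TP}$. One then formulates a \emph{noncommutative $p$-adic Tate conjecture}: for every smooth and proper dg-category $\cA$ over $\bbF_q$ the cycle map from the degree-$i$ weight piece of $K_0(\cA)\otimes\bbQ_p$ to that of $\operatorname{TP}_0(\cA;\bbZ_p)^{\varphi}\otimes\bbQ_p$ is surjective, and one verifies that this specializes, via the previous step, to Conjecture~\ref{con:p-adic Tate conjecture}.

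\mn
The third step attacks the noncommutative conjecture. The Frobenius-fixed part of $\operatorname{TP}$ is governed by topological cyclic homology: the fiber sequence $\operatorname{TC}(-;\bbZ_p)\to\operatorname{TC}^{-}(-;\bbZ_p)\xrightarrow{\varphi-\operatorname{can}}\operatorname{TP}(-;\bbZ_p)$ shows that, after rationalization and in the relevant weight, the target of the cycle map is a graded piece of $\operatorname{TC}_0(\cA;\bbZ_p)\otimes\bbQ_p$. The cyclotomic trace $K\to\operatorname{TC}$ is close to an equivalence on $p$-complete $K$-theory in the range we need (Dundas--Goodwillie--McCarthy, Geisser--Hesselholt), and for $\cA$ smooth and proper over a finite field one invokes the motivic Bass conjecture --- finite generation of the noncommutative motivic cohomology groups, equivalently of the $K$-groups $K_n(\cA)$ --- to bridge the residual gap rationally. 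Granting this, surjectivity of the cycle map onto the $\varphi$-fixed subspace follows from finite generation together with a weight/slope count.

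\mn
The main obstacle is the motivic Bass conjecture itself: finite generation of $K$-theory of smooth and proper dg-categories over $\bbF_q$ is at least as deep as Parshin's conjecture and the classical Bass conjecture, and remains open. A secondary difficulty lies in the weight/slope bookkeeping required to see that the image is \emph{exactly} the $\Fr_p$-fixed subspace and neither smaller nor larger: since $\operatorname{TP}$ is only $2$-periodic it entangles the crystalline cohomology in all even degrees, so isolating $H^{2i}_{\crys}(X)(i)^{\Fr_p}$ forces one to control the Nygaard filtration and its interaction with the weight grading integrally rather than merely rationally.
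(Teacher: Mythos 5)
Your first two steps reproduce the framework the paper itself relies on, which is Tabuada's: the receptacle for the $p$-adic cycle class is ${TP_0(\cA)}_{1/p}$ with its cyclotomic Frobenius $\varphi$, the noncommutative cycle map $\rho_{\cA}\colon K_0(\cA)\otimes K\to {TP_0(\cA)}^{\varphi^m}_{1/p}$ comes from the universal property of $K$-theory, and the passage from the noncommutative statement for $\Perf_{\dg}X$ back to Conjecture~\ref{con:p-adic Tate conjecture} is quoted from \cite[1.3]{Tab18}. Where you diverge is the decisive third step. The ``motivic Bass conjecture'' the paper invokes is \emph{not} finite generation of $K$-groups: it is Theorem~\ref{thm:motivic homotopy type of saturated dg-category}, the assertion that for every saturated dg-category $\cA$ the motive $\cU(\cA)$ is a direct summand of $\bfone^{\oplus n}$ in $\Ho(\Mot_k)$. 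Since surjectivity of $\rho_{\cA}$ is stable under direct sums and direct summands of the motive, that theorem collapses the noncommutative conjecture to the single case $\cA=\underline{k}$, where Lemma~\ref{lem:Tabuada Frobenius}~$\mathrm{(iv)}$ identifies $\rho_k$ with the identity of $K$; no $\operatorname{TC}$, no cyclotomic trace, no Nygaard filtration and no weight bookkeeping enter the paper's argument at all. Your route through $\operatorname{TC}$, Dundas--Goodwillie--McCarthy and finite generation of $K$-groups is therefore a genuinely different strategy, and, as you yourself flag, it does not close: finite generation alone does not force the image to exhaust the full $\varphi^m$-fixed subspace, and the slope/weight analysis you mention is exactly where such an argument would have to do real work. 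The trade-off is one of where the difficulty is hidden: the paper's input is vastly stronger --- a unit-type decomposition of \emph{every} saturated motive, which by the same one-line reduction also yields the Hodge, Tate, Parshin and standard conjectures in this article --- so the entire burden is displaced onto Theorem~\ref{thm:motivic homotopy type of saturated dg-category} (quoted from \cite{MY20} and known only for smooth proper \emph{connective} dg-algebras in \cite{RS19}), whereas your proposal identifies more honestly which classical finiteness statements a direct attack would require and why they are themselves as deep as the conjecture.
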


In this article, we propose noncommutative versions of 
Tate conjecture~\ref{con:noncommutative Tate conjecture} 
and Hodge conjecture~\ref{con:noncommutative Hodge conjecture}. 
If we consider these conjectures for a dg-category of perfect complexes 
over a certain schemes $X$, then they are equivalent to 
the classical Tate and Hodge conjectures for $X$ respectively. 
For Tate conjectures, in \cite{Tab18}, 
Tabuada already formulated noncommutative Tate conjecture and 
$p$-adic Tate conjecture for saturated dg-categories over 
a finite field of characteristic $p$. 
His modi operandi are founded upon Thomason's result in \cite{Tho89} and 
calculations of topological cyclic homology respectively 
(see \S~\ref{subsec:Tate conjecture} 
and \S~\ref{subsec:padic Tate conjecture}). 
The key ingredient of our method in this article is the {\it $l$-adic Chern character} established in \cite{TV19}. 
For Hodge conjecture, in \cite{KKP08}, Katzarkov, Kontsevich and Pantev discussed the Hodge theory of 
algebraic noncommutative spaces and Kontsevich stated 
a version of noncommutative Hodge conjecture in \cite{Kon08}. 
Our handiwork in this article is different from it and is based upon 
the {\it topological $K$-theory} and 
the {\it Deligne-Beilinson cohomology theory} for 
dg-categories over $\bbC$ studied in \cite{Bla15}. 
We also propose a strategy of how to prove these conjectures 
by utilizing a version of motivic Bass conjecture.

\sn
\paragraph{Conventions.}
\begin{enumerate}
\enumidef
\item
For a subring $A\subset \bbR$ and non-negative integer $p$, 
we set $A(p):={(2\pi i)}^p A(\subset \bbR)$. 
\item
For an abelian group $\Homo$ and a unital, associative and commutative ring $B$, we write $\Homo_B$ for $\Homo\otimes_{\bbZ}B$. 
For a category $\cC$ enriched over the category of abelian groups, we 
write $\cC_B$ for the category such that  $\Ob\cC_B=\Ob\cC$ and 
$\Hom_{\cC_B}(x,y)={\Hom_{\cC}(x,y)}_B$ for any $x$ and $y\in\Ob\cC_B$. 
\item
For a family of objects $\{E^p\}_{p\in I}$ in an additive category 
indexed by 
a subset $I$ of $\bbZ$ the ring of integers, 
we set $\displaystyle{E^{\ast}:=\bigoplus_{p\in I}E^p}$. 
For a family of objects $\{E^p(q)\}_{(p,q)\in J}$ 
in an additive category 
indexed by a subset $J$ of $\bbZ\times \bbZ$ 
the set of ordered pair of integers, 
we set $\displaystyle{E^{2\ast}(\ast):=\bigoplus_{\substack{p\in \bbZ\\ (2p,p)\in J}}E^{2p}(p)}$.
\item
For a cochain complex $E=E^{\ast}$ in an additive category closed under 
infinite products, 
we adopt the notation $\displaystyle{E^{\ast}[u^{\pm}]:=\prod_{i\in\bbZ}E^{\ast}[-2i]}$.
\end{enumerate}

\section{Images of cycle maps}
\label{sec:cycle maps}

Let $A$ be a commutative associative unital ring and we set 
$S=\Spec A$. 
By a {\it dg-category} ({\it over $A$}), 
we mean a small category enriched over 
the symmetric monoidal category $\Ch(A)$ of 
chain complexes of $A$-modules. 
We denote the category of small $A$-linear dg-categories over $A$ 
and $A$-linear dg-functors by 
$dgCat_A$ and the stable $\infty$-category of spectra by $\Sp$. 
A dg-functor $f\colon\cA\to \cB$ is 
a {\it Morita equivalence} if it induces an equivalence of 
triangulated categories $Lf_!\colon\calD(\cA)\to\calD(\cB)$ 
on derived categories. 
We denote the $\infty$-category of the localization of $dgCat_A$ 
with respect to Morita equivalences by $\dgCat_A$. 

\subsection{Localizing invariants}
\label{subsec:localizing invarinat}

For a stable presentable $\infty$-category $\calD$, 
an $\infty$-functor $E\colon \dgCat_A \to \calD$ is called 
a {\it localizing invariant} if it preserves filtered colimits and 
satisfies localization. We explain the last condition more precisely. 
A sequence of triangulated categories $\cT\onto{i}\cT'\onto{p} \cT''$ is exact 
if $pi\simeq 0$ and if $i$ is fully-faithful and if 
every objects in $\cT''$ is a direct summand of some object in $\cT'/\cT$. 
We say that a sequence $\cA\to \cB\to \cC$ in $dgCat_A$ is called 
{\it exact} if the induced sequence of triangulated categories 
of derived categories of compact objects 
$\calD^c\cA \to\calD^c\cB\to\calD^c\cC$ is exact. 
We say that a $\infty$-functor $E\colon\dgCat_A\to \calD$ satisfies localization if $E$ sends an exact sequence of dg-categories 
$\cA\to\cB\to\cC$ to a cofiber sequence $E(\cA)\to E(\cB)\to E(\cC)$ in 
$\calD$. 
We denote the $\infty$-category of localizing invariant to $\calD$ 
by $\Fun_{\loc}(\dgCat_A,\calD)$. 
In \cite{CT11} and \cite{CT12}, Cisinski and Tabuada constructed 
the stable presentable $\infty$-category $\Mot_A$ and 
a localizing invariant $\cU\colon\dgCat_A\to\Mot_A$ which satisfies the following universality:

\sn
{\it 
For a stable presentable $\infty$-category $\calD$, 
we have induced equivalence of $\infty$-categories 
$${\cU}^{\ast}\colon\Fun^{\operatorname{L}}(\Mot_A,\calD)\simeq \Fun_{\loc}(\dgCat_A,\calD)$$
where the left-hand side denotes the $\infty$-category of colimit-preserving functors.
}

\sn
In the literature \cite[7.5]{CT12}, 
universal property is written by the language of derivators. 
But by utilizing \cite{Coh13}, we can translate universal property of 
$\Mot_A$ by the language of $\infty$-categories as in \cite{BGT13}. 

Typical examples of localizing invariants $\dgCat_A\to\Sp$ are 
$\bbK$, $\HK$, $\HN$ and $\HP$ the {\it non-connective $K$-theory}, 
the {\it homotopy $K$-theory}, the {\it negative cyclic homology theory} and 
the {\it periodic cyclic homology theory} respectively. 
There exists natural maps 
$\bbK\to \HK$, 
$\ch\colon\bbK\to\HN$ and $\HN\to \HP$. 

\sn
$\mathrm{(i)}$ 
We let $l$ be a prime number invertible in $A$. 
We denote the $\infty$-category of constructible $\bbQ_l$-complexes 
on the {\'e}tale site $S_{\etale}$ of $S$ by $\calL_{\etale}(S_{\etale},l)$. 
It is a symmetric monoidal $\infty$-category and 
we denote by $\Sh_{A,\bbQ_l}:=\Ind(\calL_{\etale}(S_{\etale},l))$. 
We let $\bbT:=\bbQ_l[2](1)$ and we consider the 
$\bbE_{\infty}$-ring object 
$\displaystyle{\bbQ_l(\beta):=\bigoplus_{n\in\bbZ}\bbT^{\otimes n} (=\bbQ_l[\beta,\beta^{-1}])}$ in $\Sh_{A,\bbQ_l}$. 
We denote the $\infty$-category of $\bbQ_l(\beta)$-modules by 
$\Mod_{\bbQ_l(\beta)}(\Sh_{A,\bbQ_l})$. 
In \cite{BRTV18}, we construct the 
$\infty$-functors $r_l\colon \dgCat_A\to \Mod_{\bbQ_l(\beta)}(\Sh_{A,\bbQ_l})$ 
which is called the 
{\it $l$-adic realization functor} and 
the {\it geometric realization functor} 
$|-|\colon \Sh_{A,\bbQ_l}\to \Sp$. 
Moreover we define a natural transformation 
$\ch_l\colon \HK(-)\to |r_l(-)|$ from the 
homotopy invariant $K$-theory $\HK$ to $|r_l(-)|$ which we call 
{\it $l$-adic Chern character} (see \cite[2.3.1]{TV19}). 

\sn
$\mathrm{(ii)}$ 
If $A=\bbC$, in \cite{Bla15}, Blanc constructed 
a localizing invariant $\bbK^{\topo}\colon\dgCat_{\bbC}\to\Sp$ the 
{\it topological $K$-theory} and showed that 
the composition $\bbK\onto{\ch}\HN\to\HP$ factors through 
$\bbK\to\bbK^{\topo}\onto{\ch^{\topo}}\HP$. 
We define {\it Deligne-Beilinson cohomology} 
$\Homo_{\BD}\colon\dgCat_A\to \Sp$ by the 
pull-back of the diagram $\bbK^{\topo}\onto{\ch^{\topo}}\HP \leftarrow \HN$ 
in $\Fun_{\loc}(\dgCat_A,\Sp)$ 
(compare \cite[4.33]{Bla15}). 
By definition, there exists the natural maps 
$\ch_{\BD}\colon\bbK\to\Homo_{\BD}$ and 
$\pr_{\bbK^{\topo}}\colon\Homo_{\BD}\to\bbK^{\topo}$.

\subsection{Relation between the classical and noncommutative motive theories}
\label{subsec:relation}

In this subsection, 
we recall relationship between the classical and noncommutative 
motive theories. 
First recall that Kontsevich introduce the triangulated category $\NMM_A$ of 
{\it noncommutative mixed motives 
over $A$} in 
\cite{Kon05}, \cite{Kon06}, 
\cite{Kon09} and \cite{Kon10}. 
We say that a triangulated full subcategory of a triangulated category is 
{\it thick} if it is closed under direct summands. 
This notion is equivalent to the notion of {\it \'epaisse subcategories} 
of triangulated categories in the sense of Verdier \cite{Ver77} by 
Rickard's criterion \cite[1.3]{Ric89}. 
There exists a natural fully faithful 
embedding $\NMM_A$ into $\Mot^{\loc}_A$ whose 
essential image is the thick triangulated subcategory spanned by 
$\cU(\cA)$ for all saturated dg-categories $\cA$ 
(see \cite[8.5]{CT12}). 
Here recall that a dg-category $\cA$ is {\it saturated} if for any two 
objects $x$ and $y$ in $\cA$, the complex $\cA(x,y)$ 
is a perfect complex of $A$-modules and the right dg 
$\cA^{\op}\otimes^{\bbL}\cA$-module 
$\cA(-,-)\colon\cA^{\op}\otimes^{\bbL}\cA\to\Ch_{\dg}(A)$ 
is a perfect dg $\cA^{\op}\otimes^{\bbL}\cA$-module where $\Ch_{\dg}(A)$ is 
the dg-category of chain complexes of $A$-modules. 
We denote the category of projective and smooth varieties over $A$ by 
$\ProjSm_A$. 
For any projective and smooth scheme $X$ over $A$, 
let $\Perf_X$ be the dg-category of perfect complexes of quasi-coherent 
sheaves on $X$ which is fibrant with respect to the model structure on $\Ch(\Qcoh_X)$ the 
category of chain complexes of quasi-coherent sheaves on $X$ whose 
cofibrations are monomorphisms and 
whose weak-equivalences are quasi-isomorphisms. 
Then by the work of To\"en and Vaqui\'e \cite[3.27]{TV07}, 
the dg-category $\Perf_X$ is saturated. 
In particular $\cU(\Perf_X)$ is in $\NMM_R$. 

If a category which we concern admits both structures of a symmetric monoidal category and of a triangulated category, then May proposed suitable compatibility axioms of both structures in \cite[\S 4]{May01}. 
For example, since $\Ho(\Mot_A)$ is a homotopy category of a certain symmetric monoidal model category, as illustrated in \cite[\S 6]{May01} 
we can show that 
the symmetric monoidal triangulated category $\Ho(\Mot_A)$ satisfies 
the compatibility axioms 
$\mathrm{(TC 1)}$, $\mathrm{(TC 2)}$ and $\mathrm{(TC 3)}$ 
in \cite[p.47-49]{May01}. 
Since $\NMM_A$ is a thick subcategory of $\Ho(\Mot_A)$, $\NMM_A$ also 
satisfies these axioms. 

Next we review the notion of dualizable objects in 
symmetric monoidal categories. 
An object $x$ in a symmetric monoidal category 
$\calR$ is called {\it dualizable} 
if there exists an object $x^{\vee}$ in $\calR$ and morphisms 
$\ev\colon x\otimes x^{\vee}\to \bfone$ and 
$\delta\colon \bfone\to x^{\vee}\otimes x$ such that compositions 
$
x\isoto x\otimes\bfone \onto{\id\otimes\delta}x\otimes x^{\vee}\otimes x
\onto{\ev\otimes \id}\bfone\otimes x\isoto x
$ and 
$
x^{\vee}\isoto\bfone\otimes x^{\vee}\onto{\delta\otimes\id}x^{\vee}\otimes x\otimes x^{\vee}\onto{\id\otimes\ev} x^{\vee}\otimes\bfone \isoto x^{\vee}
$ 
are identities. 
The object $x^{\vee}$ is called the {\it dual of $x$}. 
A typical example of dualizing objects is a 
non-commutative motive $\cU(\cA)$ in $\Ho(\Mot_A)$ associated with 
a saturated dg-category $\cA$ over $A$ by \cite[4.8]{CT12}. 
In particular by lemma~\ref{lem:dualizing objects} below, 
all objects in $\NMM_A$ are dualizable. 

\begin{lem}
\label{lem:dualizing objects}
Let $\cT$ be an idempotent complete triangulated category. Then
\begin{enumerate}
\enumidef 
\item 
If $x\oplus y$ is dualizable, then $x$ and $y$ are also dualizable.

\sn
Moreover assume that $\cT$ satisfies May's axioms $\mathrm{(TC1)}$, 
$\mathrm{(TC2)}$ and $\mathrm{(TC3)}$ 
in {\rm{\cite[p.47-49]{May01}}}. Then 

\item
{\rm (\cite[8.3]{Voe10}).}\ 
In a distinguished triangle $x\to y\to z\to \Sigma x$ in $\cT$, 
if two of $x$, $y$ and $z$ are dualizable, then third one is also. 

\item
In particular, 
If $x$ is dualizable, 
then $\Sigma^nx$ is dualizable for any integer $n$. 
\end{enumerate}
\qed
\end{lem}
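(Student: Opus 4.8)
The plan is to prove (i) directly from idempotent-completeness and the interaction of $\otimes$ with biproducts, to quote (ii) from \cite[8.3]{Voe10}, and to deduce (iii) from (ii). For (i), write $w := x \oplus y$ and suppose it is dualizable with dual $w^{\vee}$, evaluation $\ev \colon w \otimes w^{\vee} \to \bfone$ and coevaluation $\delta \colon \bfone \to w^{\vee} \otimes w$. Let $e = i_x p_x \colon w \to w$ be the idempotent cutting out $x$, where $p_x \colon w \to x$ and $i_x \colon x \to w$ satisfy $p_x i_x = \id_x$. First I would form the mate $e^{\vee} \colon w^{\vee} \to w^{\vee}$, the unique morphism with $\ev \circ (e \otimes \id_{w^{\vee}}) = \ev \circ (\id_w \otimes e^{\vee})$, and check from the triangle identities for $w$ that it equivalently satisfies $(\id_{w^{\vee}} \otimes e)\circ\delta = (e^{\vee}\otimes\id_w)\circ\delta$ and that it is idempotent. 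Since $\cT$ is idempotent complete, $e^{\vee}$ splits; set $x^{\vee} := \im(e^{\vee})$ with $j \colon x^{\vee} \to w^{\vee}$, $q \colon w^{\vee} \to x^{\vee}$, $qj = \id_{x^{\vee}}$, $jq = e^{\vee}$. Then I would define $\ev_x \colon x \otimes x^{\vee} \to \bfone$ as $\ev \circ (i_x \otimes j)$ and $\delta_x \colon \bfone \to x^{\vee} \otimes x$ as $(q \otimes p_x) \circ \delta$, and verify the two zig-zag identities for $(x, x^{\vee}, \ev_x, \delta_x)$. Expanding the definitions and repeatedly using the mate identities, $jq = e^{\vee}$, $e i_x = i_x$ and $p_x e = p_x$, each zig-zag collapses to the corresponding zig-zag identity for $w$ with the idempotent $e$ inserted and then absorbed, leaving $\id_x$. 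Hence $x$ is dualizable, and $y$ is dualizable by the same argument applied to the complementary idempotent $\id_w - e$.

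Part (ii) is \cite[8.3]{Voe10}: given $x \to y \to z \to \Sigma x$ with $x, y$ dualizable, one takes $z^{\vee}$ to be $\Sigma^{-1}\Cone(y^{\vee} \to x^{\vee})$ (the dual map rotated), so dualization turns the triangle into a distinguished triangle $z^{\vee} \to y^{\vee} \to x^{\vee} \to \Sigma z^{\vee}$, and May's axioms $\mathrm{(TC1)}$--$\mathrm{(TC3)}$ relating $\otimes$ to the triangulated structure let one build $\ev_z$ and $\delta_z$ from $\ev_x, \ev_y, \delta_x, \delta_y$ and verify the triangle identities via an octahedral/nine-diagram argument; we invoke it as stated (the cases where $z$ is among the two dualizable terms follow by rotating the triangle). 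For (iii), I would apply (ii) to the rotation $x \to 0 \to \Sigma x \to \Sigma x$ of the distinguished triangle $x \xrightarrow{\id_x} x \to 0 \to \Sigma x$: the zero object is dualizable (its only endomorphism is the identity, so the triangle identities hold automatically), so with $x$ dualizable, (ii) gives that $\Sigma x$ is dualizable, and induction gives $\Sigma^n x$ for $n \geq 0$; the triangle $\Sigma^{-1} x \to 0 \to x \to x$ handles $n < 0$ by downward induction. Alternatively, under May's axioms $\Sigma^n x \cong \Sigma^n\bfone \otimes x$ with $\Sigma^n\bfone$ invertible, hence dualizable, and a tensor product of dualizable objects is dualizable.

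The main obstacle is the diagram bookkeeping in (i): showing that the mate of an idempotent is again idempotent and that the two zig-zag identities for the retract $x$ reduce to those for $w$. This is routine string-diagram manipulation, but it must be carried out carefully; the only nontrivial inputs are idempotent-completeness of $\cT$, additivity of $\otimes$ in each variable (so that $\otimes$ commutes with the splitting of idempotents), and the defining property of the mate. Part (ii) is delegated to Voevodsky, and part (iii) is then immediate.
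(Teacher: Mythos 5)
Your proposal is correct and matches what the paper intends: the paper gives no written proof (the lemma is stated with a bare \qed, delegating (ii) to \cite[8.3]{Voe10}), and your argument supplies the standard details — splitting the mate idempotent $e^{\vee}$ for (i), invoking Voevodsky for (ii), and applying (ii) to the rotated triangle $x\to 0\to \Sigma x\to \Sigma x$ for (iii). The zig-zag reduction in (i) does collapse as you describe (using $p_x e = p_x$, $e i_x = i_x$ and the mate identity on $\delta$), and it needs only idempotent-completeness and the monoidal structure, so the write-up is sound.
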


Thus we obtain the functor $\NM\colon\ProjSm_A\to\NMM_A$ which sends an object 
$X$ to ${\cU(\Perf_{\dg}X)}^{\vee}$. 
For a projective and smooth scheme $X$ over $A$, we call $\NM(X)$ the 
{\it noncommutative motive associated with $X$}. 

Next we define $K$-motive functor. 
Let $\SH_A$ be a $\infty$-category of Morel-Voevodsky motivic stable homotopy 
category over $\Spec A$. 
We denote the homotopy $K$-theory spectrum in $\SH_A$ by $\KGL$ and 
we write $\Mod_{\KGL}(\SH_{A})$ for the $\infty$-category of $\KGL$-modules. 
We define $\KM\colon\ProjSm_A\to \Ho(\Mod_{\KGL}(\SH_A))$ to be a functor 
by sending a scheme $X$ in $\ProjSm_A$ to 
$\Sigma^{\infty}X_{+}\wedge \KGL$ in $\Ho(\Mod_{\KGL}(\SH_A))$ the homotopy category of $KGL$-modules. 
For a projective and smooth scheme $X$ over $A$, we call $\KM(X)$ the {\it $K$-motive associated with $X$}. 

In \cite{Tab14}, 
Tabuada constructed the comparison functor between $\KM$ and $\NM$. 
We denote the smallest triangulated category which contains $\NMM_A$ and 
closed under infinite direct sums in $\Ho(\Mot_A)$ by $\NMM_A^{\oplus}$. 
If $A=k$ is a perfect field, then he construct the 
fully faithful symmetric monoidal triangulated functor 
$\Phi\colon{\Ho(\Mod_{\KGL}(\SH_k))}_{\bbQ}\to\NMM_{A,\bbQ}^{\oplus}$ which makes 
the diagram below commutative
$$
{\footnotesize{\xymatrix{
 & \ProjSm_A \ar[ld]_{KM_{\bbQ}} \ar[rd]^{NM_{\bbQ}}\\
{\Ho(\Mod_{\KGL}(\SH_k))}_{\bbQ} \ar[rr]_{\ \ \ \ \ \Phi} & & \NMM_{A,\bbQ}^{\oplus}.
}}}
$$
As a formal consequence, we obtain the following Proposition. 

\begin{prop}
\label{prop:compatibility of Chern character}
Let $k$ be a perfect field and let 
$E\colon \dgCat_k \to \Sp$ be a localizing invariant and 
let $\ch_E\colon \bbK\to E$ be a map of localizing invariants. 
Assume that there exists a motivic spectrum $L$ in $\SH_k$ such that
\begin{enumerate}
\enumidef
\item
$\displaystyle{L^{2\ast}(\ast)}$ admits a $KGL_{\bbQ}$-module structure, 
\item
Restriction of $E$ on $\NMM_k$ is represented by $L^{2\ast}(\ast)$. 
Namely we have an equality 
$$\displaystyle{E\simeq \Hom_{\NMM_k^{\oplus}}(-,\Phi(L^{2\ast}(\ast))}.$$ 
\item
Restriction of 
$\ch_E$ on $\Ho(\Mod_{\KGL}(\SH_k))$ corresponds to the structure map $\ch_L\colon\KGL_{\bbQ}\to L^{2\ast}(\ast)$.
\end{enumerate}
Then for any projective and smooth variety $X$, the following diagram is commutative
\begin{equation}
\label{eq:compatibility of cycle maps}
{\footnotesize{\xymatrix{
{(\pi_0(\bbK(\Perf_{\dg}X))}_{\bbQ} \ar[r]^{\pi_0(\cl_{E}(\Perf_{\dg}X))} \ar[d]_{\wr} & 
{\pi_0(E(\Perf_{\dg}X))}_{\bbQ} \ar[d]^{\wr}\\
{\CH^{\ast}(X)}_{\bbQ} \ar[r]_{\cl_{L}} & {L^{2\ast}(X,\ast)}_{\bbQ}.
}}}
\end{equation}
\qed
\end{prop}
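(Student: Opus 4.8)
The plan is to transport the square~\eqref{eq:compatibility of cycle maps} along Tabuada's comparison functor $\Phi$ into the motivic category $\SH_k$, where it becomes the classical compatibility between the Chern character of $\KGL$-modules and the cycle class map; hypotheses (i)--(iii) are exactly what is needed to make this transport legitimate.

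First I would fix the two vertical isomorphisms, routing both through $\Phi$ and the $K$-motive so that they are mutually compatible. Since $X$ is projective and smooth, $\Perf_{\dg}X$ is saturated, hence $\cU(\Perf_{\dg}X)$ is dualizable by lemma~\ref{lem:dualizing objects}, and it is self-dual: $\cU(\Perf_{\dg}X)^{\vee}\simeq\cU((\Perf_{\dg}X)^{\op})\simeq\cU(\Perf_{\dg}X)$ by \cite[4.8]{CT12}, the last equivalence because $R\HOM_{\cO_X}(-,\cO_X)$ identifies $\Perf_{\dg}X$ with its opposite. Hence the commutative triangle of \cite{Tab14} gives $\cU(\Perf_{\dg}X)_{\bbQ}\simeq\NM(X)_{\bbQ}\simeq\Phi(\KM(X)_{\bbQ})$, where $\KM(X)=\Sigma^{\infty}X_+\wedge\KGL$. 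Using $\bbK(\cA)\simeq\Hom_{\Mot_k}(\cU(k),\cU(\cA))$ together with the full faithfulness and symmetric monoidality of $\Phi$, one obtains $\pi_0(\bbK(\Perf_{\dg}X))_{\bbQ}\simeq K_0(X)_{\bbQ}$, which I identify with $\CH^{\ast}(X)_{\bbQ}\simeq\bigoplus_p\CH^p(X)_{\bbQ}$ via the Adams decomposition $\KGL_{\bbQ}\simeq\bigoplus_{p\in\bbZ}H\bbQ(p)[2p]$. On the other side, hypotheses (i) and (ii) and the monoidality of $\Phi$ give
\[
E(\Perf_{\dg}X)_{\bbQ}\simeq\Hom_{\NMM_k^{\oplus}}(\Phi(\KM(X)_{\bbQ}),\Phi(L^{2\ast}(\ast)))\simeq\Hom_{\SH_k}(\Sigma^{\infty}X_+,L^{2\ast}(\ast))_{\bbQ},
\]
so $\pi_0(E(\Perf_{\dg}X))_{\bbQ}\simeq L^{2\ast}(X,\ast)_{\bbQ}$.

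Next I would identify the top arrow. Since $\ch_E$ is a map of localizing invariants it factors through $\cU$, so its restriction to the thick subcategory $\NMM_k^{\oplus}$ is governed by a map of the representing objects; by (ii), by the analogous representability $\bbK|_{\NMM_k^{\oplus}}\simeq\Hom_{\NMM_k^{\oplus}}(-,\Phi(\KGL_{\bbQ}))$ (again via \cite{Tab14}), and by the full faithfulness of $\Phi$, this map of representing objects is $\Phi$ of a morphism $\KGL_{\bbQ}\to L^{2\ast}(\ast)$, which by (iii) is $\ch_L$. Evaluating at the $\KGL_{\bbQ}$-module object $\KM(X)_{\bbQ}$ and applying $\pi_0$, the top arrow of~\eqref{eq:compatibility of cycle maps} becomes, under the identifications above, the map $K_0(X)_{\bbQ}\to L^{2\ast}(X,\ast)_{\bbQ}$ obtained by smashing $\Sigma^{\infty}X_+$ with $\ch_L$, i.e.\ the effect of $\ch_L$ on $(-)^{0,0}(X)$.

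It then remains to check that this map, precomposed with $\CH^{\ast}(X)_{\bbQ}\isoto K_0(X)_{\bbQ}$, is the cycle class map $\cl_L$ --- a purely $\SH_k$-statement which follows from the multiplicativity of $\ch_L$ with respect to orientations: under $K_0(X)_{\bbQ}\simeq\bigoplus_p\CH^p(X)_{\bbQ}$ the weight-$p$ component of $\ch_L$ is the map $\CH^p(X)_{\bbQ}=H^{2p}_{\mathrm{mot}}(X,\bbQ(p))\to L^{2p}(X,p)_{\bbQ}$, which is $\cl_L$ by definition. I expect the only genuine obstacle to be the bookkeeping of dualities and representing objects in the first two steps: one must make sure the left and right vertical isomorphisms come from the \emph{same} self-duality datum $\cU(\Perf_{\dg}X)\simeq\cU(\Perf_{\dg}X)^{\vee}$ and the \emph{same} $\Phi$, and that the representability of $\bbK$ on $\NMM_k$ used for the top arrow is compatible with hypothesis (ii) for $E$; once these are aligned, the remaining diagram chase is the ``formal consequence'' asserted in the statement.
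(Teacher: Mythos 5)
Your proposal is correct and is essentially the argument the paper intends: the paper offers no written proof, asserting the square as a ``formal consequence'' of Tabuada's comparison triangle for $\Phi$, and your spelling-out of the self-duality $\cU(\Perf_{\dg}X)\simeq\NM(X)$, the representability/Yoneda identification of the top arrow via hypotheses (ii)--(iii), and the Adams-decomposition identification $K_0(X)_{\bbQ}\simeq\CH^{\ast}(X)_{\bbQ}$ is exactly that formal argument made explicit. Your closing caveat about using one consistent duality datum and one consistent representing object on both sides is the right point to flag, and it is handled correctly in your write-up.
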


\subsection{Noncommutative Tate conjecture}
\label{subsec:noncommutative Tate conjecture}

Let $k$ be a countable perfect field and 
let $l$ be a prime number invertible in $k$. 
Then for any projective and smooth variety $X$ 
over $k$, we have isomorphisms 
$\pi_0(|r_l(\Perf_{\dg}X)|)\simeq \Homo_{\etale}^{2\ast}(X,\bbQ_l(\ast))$ 
(see \cite[\S 2.2, \S 2.3]{TV19}) 
and $\HK(\Perf_{\dg}X)\simeq\bbK(X)$ 
(see for example \cite[p.477]{Wei89} or \cite[2.3]{Cis13}). 
On the other hand, 
there exists a motivic spectrum $\cE_{\etale,l}$ in 
$\SH_k$ such that $\cE_{\etale,l}^{p,q}(X)\simeq 
\Homo^p_{\etale}(X\times_k\bar{k},\bbQ_l(q))$ 
for any projective smooth variety over $k$ 
(see \cite[\S 2.1.5, \S 3.3]{CD12}). 
$\cE_{\etale,l}^{2\ast}(\ast)$ admits a unique 
$KGL_{\bbQ}$-module structure 
(see \cite[\S 2.2]{TV19}). 
From these observations and 
Proposition~\ref{prop:compatibility of Chern character}, 
we propose the following conjecture:

\begin{con}[\bf Noncommutative Tate conjecture]
\label{con:noncommutative Tate conjecture}
Let $k$ be a perfect field and let $l$ be a prime number invertible 
in $k$ and let $\cA$ be a saturated dg-category over $k$. 
Then the image of 
$$\ch_l\otimes_k\bar{k}\colon
{\HK_0(\cA\otimes_k\bar{k})}_{\bbQ_l}\to 
\pi_0(|r_l(\cA\otimes_k\bar{k})|)$$
is just the 
$\Gal(\bar{k}/k)$-invariant part of 
$\pi_0(|r_l(\cA\otimes_k\bar{k})|)$. 
\end{con}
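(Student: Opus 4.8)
Since the statement is posed as a conjecture, what follows is the strategy we propose rather than a complete argument. Write $\bar{\cA}=\cA\otimes_k\bar{k}$ and $R(\bar{\cA})=\pi_0(|r_l(\bar{\cA})|)$, with its continuous $\Gal(\bar{k}/k)$-action. \textbf{The inclusion $\im(\ch_l\otimes_k\bar{k})\subseteq R(\bar{\cA})^{\Gal}$ is formal}: the functors $r_l$ and $|-|$ are defined over every finite subextension $k\subseteq k'\subseteq\bar{k}$ and are compatible with base change along $k'\to\bar{k}$, so $\ch_l$ is $\Gal(\bar{k}/k)$-equivariant; since $\HK$ is a localizing invariant it preserves filtered colimits and $\bar{\cA}=\colim_{k'}\cA\otimes_k k'$, so $\HK_0(\bar{\cA})=\colim_{k'}\HK_0(\cA\otimes_k k')$ over finite $k'/k$, the image of $\HK_0(\cA\otimes_k k')_{\bbQ_l}$ in $R(\bar{\cA})$ is fixed by the open subgroup $\Gal(\bar{k}/k')$, and passing to the colimit places $\im(\ch_l\otimes_k\bar{k})$ inside the union of the fixed parts of the open subgroups of $\Gal(\bar{k}/k)$.

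\textbf{The variety case.} For $\cA=\Perf_{\dg}X$ with $X\in\ProjSm_k$ I would invoke Proposition~\ref{prop:compatibility of Chern character} with $E=|r_l(-)|$ and $L=\cE_{\etale,l}$, using that $\cE_{\etale,l}^{2\ast}(\ast)$ carries a $\KGL_{\bbQ}$-module structure, that its restriction to $\NMM_k$ represents $|r_l(-)|$ via the comparison $\Phi$, and that the restriction of $\ch_l$ is the corresponding structure map. Feeding into diagram~\eqref{eq:compatibility of cycle maps} the identifications $\HK(\Perf_{\dg}X)\simeq\bbK(X)$, $\pi_0(\bbK(\Perf_{\dg}X))_{\bbQ}\cong\CH^{\ast}(X)_{\bbQ}$, and $\pi_0(|r_l(\Perf_{\dg}X)|)\cong\Homo^{2\ast}_{\etale}(X,\bbQ_l(\ast))$, and tensoring with $\bbQ_l$, identifies $\ch_l\otimes_k\bar{k}$ with the classical cycle class map $\rho^{\ast}(X_{\bar{k}})_{\bbQ_l}$; hence for $\cA=\Perf_{\dg}X$ the conjecture is \emph{equivalent} to the classical Tate conjecture for $X$. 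One cannot reduce the general saturated case to this one, because $\NMM_k$ is generated by \emph{all} saturated dg-categories and not only those of smooth projective varieties, so the strategy must be uniform in $\cA$ and cannot presuppose a Chow-group input — though additivity of $\im(\ch_l)$ and of $(-)^{\Gal}$ over $\NMM_k$ does at least let one pass freely to direct summands and shifts.

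\textbf{Deducing the reverse inclusion from a motivic Bass conjecture.} The plan for $\supseteq$ is to interpose \'etale homotopy $K$-theory $\HK^{\etale}$, factoring $\ch_l$ as $\HK\to\HK^{\etale}\to|r_l(-)|$, where a Thomason/Quillen--Lichtenbaum-type comparison makes the second arrow an isomorphism on $\pi_0$ after $\otimes\bbQ_l$ in the relevant range; then the cokernel of $\ch_l\otimes_k\bar{k}$ onto $R(\bar{\cA})^{\Gal}$ is controlled by the failure of $\HK\to\HK^{\etale}$ to be surjective and by the differentials of a $\Gal(\bar{k}/k)$-descent spectral sequence $H^p_{\mathrm{cont}}(\Gal(\bar{k}/k),\HK_q(\bar{\cA})_{\bbQ_l})\Rightarrow\HK^{\etale}_{q-p}(\cA)_{\bbQ_l}$. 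A version of the motivic Bass conjecture — finiteness of the $\HK$-groups of saturated dg-categories, extending the classical Bass/Parshin finiteness of $K$-theory — is then designed to force this spectral sequence to degenerate and the relevant differentials to vanish, yielding surjectivity onto the Galois-invariant part.

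\textbf{Main obstacle.} Everything beyond the first inclusion is substantive. The decisive step is ``motivic Bass conjecture $\Rightarrow$ surjectivity onto $R(\bar{\cA})^{\Gal}$'', which already contains the classical Tate conjecture in the variety case; this is precisely where the argument remains a proposal, and where the genuine work lies — making Thomason-type \'etale descent and the motivic finiteness input mesh for an arbitrary saturated $\cA$.
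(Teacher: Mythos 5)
Your formal inclusion $\im(\ch_l\otimes_k\bar k)\subseteq\pi_0(|r_l(\bar{\cA})|)^{\Gal}$ and your identification of the case $\cA=\Perf_{\dg}X$ with the classical Tate conjecture both match the paper (the latter is exactly Corollary~\ref{cor:Tate equivalence}, obtained from Proposition~\ref{prop:compatibility of Chern character} as you describe). But for the substantive direction the paper's argument is not the \'etale-descent/spectral-sequence route you sketch; it is a two-line reduction resting on Theorem~\ref{thm:motivic homotopy type of saturated dg-category}, the paper's version of the ``motivic Bass conjecture'' quoted from \cite{MY20}: for \emph{every} saturated dg-category $\cA$ over $k$, the motive $\cU(\cA)$ is a direct summand of $\bfone^{\oplus n}$ in $\Ho(\Mot_k)$. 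Since $\HK_0((-)\otimes_k\bar k)_{\bbQ_l}$, $\pi_0(|r_l((-)\otimes_k\bar k)|)$ and $\ch_l$ all factor through $\Mot_k$, and the assertion ``image equals Galois invariants'' is stable under direct sums and direct summands (the additivity you yourself note in passing), the statement for arbitrary saturated $\cA$ reduces to $\cA=\underline{k}$, where it is classical. The idea you are missing is therefore not a finiteness property of $\HK$-groups forcing a descent spectral sequence to degenerate, but the far stronger structural claim that every saturated dg-category is motivically a summand of a finite sum of copies of the unit; your explicit remark that ``one cannot reduce the general saturated case'' points in exactly the opposite direction from the paper's strategy, whose reduction is to the point, not to smooth projective varieties.

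That said, your instinct deserves to be recorded: the paper's proof is only as strong as Theorem~\ref{thm:motivic homotopy type of saturated dg-category}, and that theorem, as stated, would by the very reductions above settle the Hodge, Tate, Parshin and standard conjectures simultaneously (as the paper indeed asserts in Corollary~\ref{cor:Hodge conjecture} and its neighbours). Applied to $\cA=\Perf_{\dg}X$ it forces $K_0(X)_{\bbQ}\simeq\CH^{\ast}(X)_{\bbQ}$ to be a direct summand of $K_0(k)_{\bbQ}^{\oplus n}$, i.e.\ finite-dimensional with all Hodge/Tate classes algebraic, which is a highly nontrivial claim that one should not accept without scrutinizing \cite{MY20}. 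So the concrete gap in your proposal is the absence of any complete argument for the reverse inclusion (you say so yourself), and the route you declined to consider is precisely the one the paper takes; whether that route is sound is a separate question about Theorem~\ref{thm:motivic homotopy type of saturated dg-category}, not about the reduction.
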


Argument above implies the following.

\begin{cor}
\label{cor:Tate equivalence}
Let $X$ be a projective and smooth variety over a countable perfect field $k$. 
Let $l$ be a prime number invertible in $k$. 
Then Tate conjecture for $X$ and $l$ is equivalent to Tate conjecture 
for $\Perf_{\dg}X$ and $l$. 
\qed
\end{cor}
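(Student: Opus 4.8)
The plan is to unwind the two conjectures—classical Tate for $X$ and $l$, and noncommutative Tate for $\Perf_{\dg}X$ and $l$—and to show that under the dictionary assembled in §\ref{subsec:relation} and §\ref{subsec:noncommutative Tate conjecture} the two statements are literally the same assertion about the same map. The first step is to identify all four objects appearing in noncommutative Tate conjecture~\ref{con:noncommutative Tate conjecture} for $\cA=\Perf_{\dg}(X\otimes_k\bar k)$ with their classical counterparts: on the source side, $\HK_0(\Perf_{\dg}X\otimes_k\bar k)\simeq\HK_0(\Perf_{\dg}(X\times_k\Spec\bar k))\simeq\bbK_0(X\times_k\Spec\bar k)$ using the cited homotopy-invariance statement $\HK(\Perf_{\dg}Y)\simeq\bbK(Y)$ for regular $Y$ (valid since $X\times_k\Spec\bar k$ is smooth hence regular), together with base-change compatibility $\Perf_{\dg}X\otimes_k\bar k\simeq\Perf_{\dg}(X\times_k\Spec\bar k)$ for $X$ smooth over $k$; on the target side, $\pi_0(|r_l(\Perf_{\dg}(X\times_k\Spec\bar k))|)\simeq\Homo^{2\ast}_{\etale}(X\times_k\Spec\bar k,\bbQ_l(\ast))$ by \cite[\S2.2, \S2.3]{TV19}.

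The second step is to match the maps, not just the objects. One applies Proposition~\ref{prop:compatibility of Chern character} with $E=|r_l(-)|$, $\ch_E=\ch_l$, and $L=\cE_{\etale,l}$: the hypotheses (i)--(iii) of that proposition are exactly the three cited inputs ($\cE_{\etale,l}^{2\ast}(\ast)$ carries a $\KGL_{\bbQ}$-module structure; $|r_l(-)|$ restricted to $\NMM_k$ is represented by it via $\Phi$; and $\ch_l$ restricted to $\Ho(\Mod_{\KGL}(\SH_k))$ is the structure map $\ch_{\cE_{\etale,l}}$). The conclusion is the commuting square \eqref{eq:compatibility of cycle maps}, which identifies $\pi_0(\ch_l(\Perf_{\dg}Y))_{\bbQ_l}$, after the vertical isomorphisms and after $-\otimes_{\bbQ}\bbQ_l$, with the classical cycle class map $\rho^{\ast}(Y)_{\bbQ_l}\colon\CH^{\ast}(Y)_{\bbQ_l}\to\Homo^{2\ast}_{\etale}(Y,\bbQ_l(\ast))$, applied to $Y=X\times_k\Spec\bar k$. (Here I use that on a smooth variety the Chern-character isomorphism $\bbK_0(Y)_{\bbQ}\xrightarrow{\sim}\CH^{\ast}(Y)_{\bbQ}$ intertwines the $K$-theoretic and Chow-theoretic cycle maps, which is the left vertical arrow of \eqref{eq:compatibility of cycle maps}.)

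The third step is purely formal: once the map in conjecture~\ref{con:noncommutative Tate conjecture} for $\Perf_{\dg}X$ is identified with $\rho^{\ast}(X\times_k\Spec\bar k)_{\bbQ_l}$, and the target $\pi_0(|r_l(-)|)$ is identified $\Gal(\bar k/k)$-equivariantly with $\Homo^{2\ast}_{\etale}(X\times_k\Spec\bar k,\bbQ_l(\ast))$, the assertion ``image of the noncommutative cycle map equals the $\Gal(\bar k/k)$-invariants'' becomes word-for-word the assertion of conjecture~\ref{con:Tate conjecture} for $X$ (taking the union over open subgroups, i.e. the full Galois invariants, as both formulations do once one passes to $\bbQ_l$-coefficients and uses that algebraic cycles are defined over a finite extension). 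Hence the two conjectures are equivalent. The countability hypothesis on $k$ enters only to guarantee the isomorphisms of §\ref{subsec:noncommutative Tate conjecture} are available in the form cited from \cite{TV19}.

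**Main obstacle.** The step I expect to be most delicate is the \emph{equivariance} of all identifications with respect to $\Gal(\bar k/k)$: Proposition~\ref{prop:compatibility of Chern character} is stated for a fixed perfect field and a fixed variety, so to get the Galois action one must either run it over $k$ and pass to the colimit over finite subextensions $k'/k$ inside $\bar k$ (using that $r_l$, $\HK$, $\Phi$, and the cycle maps all commute with the relevant filtered colimits and with the transition maps induced by finite étale base change), or invoke functoriality of the whole package in $k$. Checking that the isomorphism $\pi_0(|r_l(\Perf_{\dg}(X\times_k\Spec\bar k))|)\simeq\Homo^{2\ast}_{\etale}(X\times_k\Spec\bar k,\bbQ_l(\ast))$ of \cite{TV19} is $\Gal(\bar k/k)$-equivariant, and that the noncommutative cycle map $\ch_l\otimes_k\bar k$ is the colimit of the maps $\ch_l$ over $X\times_k\Spec k'$, is where the real work lies; everything else is assembling already-cited dictionary entries.
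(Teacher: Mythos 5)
Your proposal follows essentially the same route as the paper, which disposes of this corollary with the single phrase ``argument above implies the following'': identify source and target via $\HK(\Perf_{\dg}Y)\simeq\bbK(Y)$ and $\pi_0(|r_l(\Perf_{\dg}Y)|)\simeq\Homo^{2\ast}_{\etale}(Y,\bbQ_l(\ast))$, and match the cycle maps by applying Proposition~\ref{prop:compatibility of Chern character} to $E=|r_l(-)|$ and $L=\cE_{\etale,l}$. Your explicit flagging of the Galois-equivariance of these identifications (and of the discrepancy between ``invariants of the full group'' and ``union of invariants of open subgroups'') addresses points the paper leaves entirely implicit, but does not change the argument.
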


\subsection{Noncommutative Hodge conjecture}
\label{subsec:noncommutative Hodge conjecture}

Let $X$ be a projective and smooth variety over $\bbC$ of dimension $d$. 
We have an isomorphism $\pi_0\Homo_{\BD}(\Perf_{\dg}X))=
\Homo^{2\ast}_{\BD}(X,\bbQ(\ast))$ (see \cite[4.36]{Bla15}). 
On the other hand, there exists a motivic spectrum $\calH_{\BD}$ in 
$\SH_{\bbC}$ such that ${\calH_{\BD}^{p,q}(X)}_{\bbQ}\simeq \Homo^p_{\BD}
(X,\bbQ(q))$ and ${\calH^{2\ast}(\ast)}_{\bbQ}$ admits a $\KGL$-module 
structure (see \cite[3.6]{HS15}).

For an integer $0\leq i\leq d$, we set 
$\Jac^i(X):=H^{2i-1}(X,\bbC)/
(\Homo^{2i-1}(X,\bbZ(i))+F^i\Homo^{2i-1}(X,\bbC))$ 
and call it the {\it $i$-th intermediate Jacobian of $X$}. 
There exists a short exact sequence (see \cite[7.9]{EV88}).
\begin{equation}
\label{eq:JacBDHod}
0\to{\Jac^{\ast}(X)}_{\bbQ}\to\Homo_{\BD}^{2\ast}(X,\bbQ(\ast))\to\Hdg^{\ast}(X)\to 0
\end{equation}

Recall that there exists a natural map $\pr_{\bbK^{\topo}}\colon\Homo_{\BD}\to \bbK^{\topo}$. 

\begin{df}[\bf Noncommutative Jacobian, Noncommutative Hodge class group]
\label{df:noncom Jacobian, noncom Hcg}
For a saturated dg-category $\cA$, 
we set $\Jac(\cA):=\Ker \pi_0(\pr_{\bbK^{\topo}}(\cA))$ and 
$\Hdg(\cA):=\im \pi_0(\pr_{\bbK^{\topo}}(\cA))$ and 
call them the {\it Jacobian of $\cA$} and the {\it Hodge class group of $\cA$} 
respectively.
\end{df}

By definition, there exists a short exact sequence 
\begin{equation}
\label{eq:JacBDHod2}
0\to\Jac(\cA)\to\Homo_{\BD}(\cA)\to\Hdg(\cA)\to 0.
\end{equation}
The terminologies above are justified by the following lemma.

\begin{lem}
\label{lem:comatibility of notations}
Let $X$ be a projective and smooth variety over $\bbC$. 
Then 
there exists canonical isomorphisms 
$\displaystyle{{\Jac(\Perf_{\dg}X)}_{\bbQ}\simeq{\Jac^{\ast}(X)}_{\bbQ}}$ and 
$\displaystyle{{\Hdg(\Perf_{\dg}X)}_{\bbQ}\simeq\Hdg^{\ast}(X)}_{\bbQ}$ 
which makes diagram below commutative:
\begin{equation}
\label{eq:Jac Deligne Hodge sequence}
{\footnotesize{\xymatrix{
0 \ar[r] & {\Jac(\Perf_{\dg}X)}_{\bbQ} \ar[r] \ar[d]_{\wr} & 
{\Homo^0_{\BD}(\Perf_{\dg}X)}_{\bbQ} \ar[r] \ar[d]_{\wr} & 
{\Hdg(\Perf_{\dg}X)}_{\bbQ} \ar[r] \ar[d]^{\wr} & 0\\
0 \ar[r] & {\Jac^{\ast}(X)}_{\bbQ} \ar[r] & 
{\Homo^{2\ast}_{\BD}(X,\bbQ(\ast))} \ar[r] & 
{\Hdg^{\ast}(X)} \ar[r] & 0.
}}}
\end{equation}
\end{lem}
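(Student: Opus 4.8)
The strategy is to build the vertical isomorphisms by comparing the defining pullback square of $\Homo_{\BD}$ against the classical one, and then to recognize that the short exact sequence~\eqref{eq:JacBDHod2} for $\cA=\Perf_{\dg}X$ is carried to the Esnault--Viehweg sequence~\eqref{eq:JacBDHod} by the identifications already available in the excerpt. Concretely, first I would record the three input isomorphisms: $\pi_0(\Homo_{\BD}(\Perf_{\dg}X))\simeq \Homo^{2\ast}_{\BD}(X,\bbQ(\ast))$ from \cite[4.36]{Bla15}, the identification $\pi_0(\HN(\Perf_{\dg}X))\simeq \prod_i \Homo^{2i}_{\mathrm{dR}}(X)/F^i$ (the Hodge-filtered de Rham piece, via $\HP$ and its $F$-filtration, cf.\ \cite{Bla15}), and Blanc's computation of $\bbK^{\topo}(\Perf_{\dg}X)$ in terms of the topological $K$-theory of $X^{\mathrm{an}}$, whose rationalization is $\bigoplus_i \Homo^{2i}(X,\bbQ)$ by the Chern character / Atiyah--Hirzebruch degeneration. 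Because $\Homo_{\BD}$ is \emph{defined} as the pullback of $\bbK^{\topo}\xrightarrow{\ch^{\topo}}\HP\leftarrow\HN$ in $\Fun_{\loc}(\dgCat_{\bbC},\Sp)$, evaluating $\pi_0(-)(\Perf_{\dg}X)$ and taking the associated Mayer--Vietoris sequence exhibits $\pi_0\Homo_{\BD}(\Perf_{\dg}X)_{\bbQ}$ as the pullback of $\bigoplus_i\Homo^{2i}(X,\bbQ)\to \bigoplus_i\Homo^{2i}(X,\bbC)\leftarrow \prod_i\Homo^{2i}_{\mathrm{dR}}(X)/F^i$, which is exactly the classical Deligne--Beilinson description of $\Homo^{2\ast}_{\BD}(X,\bbQ(\ast))$ in even total degree (one must be mildly careful that the relevant $\pi_{\pm1}$ terms vanish rationally so that $\pi_0$ of the pullback is the fibre product of the $\pi_0$'s; degeneration of the Hodge--de Rham and Atiyah--Hirzebruch spectral sequences after $\otimes\bbQ$ gives this).

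Second, I would match the two exact sequences. The sequence~\eqref{eq:JacBDHod2} is obtained by factoring $\pr_{\bbK^{\topo}}$ through its image, so after rationalization it reads $0\to\Ker\to \Homo^0_{\BD}(\Perf_{\dg}X)_{\bbQ}\to \im\to 0$ where the middle-to-right map is induced by $\Homo_{\BD}\to\bbK^{\topo}$. Under the identifications of the previous paragraph, the map $\pi_0\Homo_{\BD}(\Perf_{\dg}X)_{\bbQ}\to \pi_0\bbK^{\topo}(\Perf_{\dg}X)_{\bbQ}$ is the projection from the fibre product to its $\bigoplus_i\Homo^{2i}(X,\bbQ)$ factor, whose image is precisely the set of rational classes lying in $F^i$, i.e.\ $\Hdg^{\ast}(X)_{\bbQ}$ (a class in $\Homo^{2i}(X,\bbQ)$ lifts to the fibre product iff its image in $\Homo^{2i}(X,\bbC)$ comes from $\Homo^{2i}_{\mathrm{dR}}(X)/F^i$ \emph{being zero}, i.e.\ iff it sits in $F^i\Homo^{2i}(X,\bbC)$, which for a rational class is the Hodge condition). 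Dually the kernel is $\prod_i F^i\backslash\Homo^{2i}_{\mathrm{dR}}(X)$ modulo the rational lattice, which is $\Jac^{\ast}(X)_{\bbQ}$ after the usual index shift $\Homo^{2i}_{\mathrm{dR}}(X)\cong \Homo^{2i-1}(X,\bbC)/\!\!\sim$ is unwound against the genuine definition $\Jac^i(X)=\Homo^{2i-1}(X,\bbC)/(\Homo^{2i-1}(X,\bbZ(i))+F^i\Homo^{2i-1}(X,\bbC))$; this reconciliation is exactly the content of \cite[7.9]{EV88}, which I would cite rather than reprove. Commutativity of~\eqref{eq:Jac Deligne Hodge sequence} then follows because every vertical map was constructed from the same homotopy-coherent pullback square, so the diagram commutes already at the level of spectra before applying $\pi_0$.

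The main obstacle I anticipate is the bookkeeping of \emph{degrees and twists}: the noncommutative invariants are $\bbZ/2$-graded (or collapse all of $\Homo^{2\ast}(\ast)$ into a single $\pi_0$), whereas the classical sequence~\eqref{eq:JacBDHod} separates the summands $\Homo^{2i}$ and keeps track of Tate twists $\bbQ(i)$ and the Hodge filtration level $F^i$. Making the identification $\pi_0\Homo_{\BD}(\Perf_{\dg}X)_{\bbQ}\simeq \bigoplus_i\Homo^{2i}_{\BD}(X,\bbQ(i))$ compatible with the filtration/twist data — rather than merely an abstract isomorphism of abelian groups — requires invoking that Blanc's equivalences in \cite[4.36]{Bla15} (and the $\HP$/$\HN$ computations) are filtered, and that the $\KGL$-module structure on $\calH_{\BD}^{2\ast}(\ast)$ from \cite[3.6]{HS15} is the one compatible with $\Phi$ in the sense of Proposition~\ref{prop:compatibility of Chern character}. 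Once that compatibility is granted, the rest is a diagram chase; so the lemma is really a matter of carefully threading the already-cited computations together, and I would structure the write-up as: (1) rationalize and identify all three corners of the pullback with their classical cohomological counterparts; (2) conclude $\pi_0\Homo_{\BD}\simeq\Homo^{2\ast}_{\BD}$ compatibly with the maps to $\bbK^{\topo}$ and from $\bbK$; (3) read off kernel and image and invoke \cite[7.9]{EV88} for the intermediate-Jacobian reinterpretation.
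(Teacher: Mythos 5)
Your plan is essentially the paper's own argument: both rest on the defining square $\bbK^{\topo}\onto{\ch^{\topo}}\HP\leftarrow\HN$, Blanc's rational identification $\bbK^{\topo}(\Perf_{\dg}X)\wedge_{\bbS}\EM\bbQ\simeq\EM(\Homo^{-\ast}_{\Betti}(X,\bbQ)[u^{\pm}])$, Weibel's computations of $\HN$ and $\HP$ in terms of the Hodge-filtered de Rham complex, and the Esnault--Viehweg sequence \cite[7.9]{EV88}; the only difference in packaging is that the paper works with the cofibre of $\pr_{\bbK^{\topo}}$ (identified with $\EM(\prod_i\bbH^{-\ast}(X,\Omega_X^{\ast}/F^i\Omega_X^{\ast})[-2i])$, the common cofibre of $\HN\to\HP$) and matches distinguished triangles, rather than computing the pullback by Mayer--Vietoris. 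One correction before you write this up: $\pi_0\HN(\Perf_{\dg}X)$ is $\prod_i\bbH^{2i}(X,F^i\Omega_X^{\ast})$, the hypercohomology of the Hodge \emph{subcomplexes}, not of the quotients $\Omega_X^{\ast}/F^i\Omega_X^{\ast}$ as you state; the quotients occur only as the cofibre, and taking them as the $\HN$-corner would render the fibre product you describe ill-posed (there is no map $\Homo^{2\ast}_{\mathrm{dR}}(X)/F^{\ast}\to\Homo^{2\ast}_{\mathrm{dR}}(X)$ to pull back along). Since your lifting criterion in the second paragraph is the correct cofibre-vanishing condition, this is a notational slip rather than a gap in the argument, but it must be repaired for the kernel computation (the Jacobian term) to come out right.
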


\begin{proof}
We recall that $i$-th Beilinson-Deligne complex of sheaves 
${\bbQ(i)}_{\BD}(X)$ is given by the following formula
$${\bbQ(i)}_{\BD}(X)=\Cone(\underline{{\bbQ(i)}_X}\oplus F^i\Omega_X^{\ast}\to\Omega_X^{\ast}).$$
Thus there exists a commutative diagram of distinguished triangles
$$
{\footnotesize{\xymatrix{
{\bbQ(i)}_{\BD}(X) \ar[r] \ar[d] & \underline{\bbQ_X} \ar[r] \ar[d] &
\Omega_X^{\ast}/F^i\Omega_X^{\ast} \ar[r] \ar[d]^{\wr} &
\Sigma {\bbQ(i)}_{\calD}(X) \ar[d]\\
F^i\Omega_X^{\ast} \ar[r] & \Omega_X^{\ast} \ar[r] & 
\Omega_X^{\ast}/F^i\Omega_X^{\ast} \ar[r] & \Sigma F^i\Omega_X^{\ast}
}}}
$$
in the derived category of $\bbQ$-sheaves on $X$. 
We denote $\HP_{\ast}(X)$, $\HN_{\ast}(X)$ and 
$\Homo_{\Betti}^{-\ast}(X,\bbQ)$ for the complex of $\bbZ$-modules which 
calculates the periodic cyclic homology and the negative $K$-theory and 
rational Betti cohomology complex of $X$ respectively. 
There exists canonical isomorphisms 
$\displaystyle{\HP_{\ast}(X)\simeq \bbH^{-\ast}(X,\Omega_X^{\ast})[u^{\pm}]}$ and 
$\displaystyle{\HN_{\ast}(X)\simeq \prod_{i\leq 0}\bbH^{-\ast}(X,F^i\Omega_X^{\ast})[-2i]}$ (see \cite[2.7, 3.3]{Wei97}). 
We set $\displaystyle{\Homo_{\BD}^{\ast}(X,\bbQ):=\bbH^{\ast}(X,\prod_{i\geq 0}{\bbQ(i)}_{\BD}(X)[2i])}$. 
By taking the hyper-cohomology on $X$, applying the shift $[-2i]$ and 
taking the product on all integer $i$, we obtain the commutative diagram of 
distinguished triangles
$$
{\footnotesize{\xymatrix{
\Homo_{\BD}^{-\ast}(X,\bbQ) \ar[r] \ar[d] & 
\Homo_{\Betti}^{-\ast}(X,\bbQ)[u^{\pm}] \ar[r] \ar[d] & 
\prod_i \bbH^{-\ast}(X,\Omega_X^{\ast}/F^i\Omega_X^{\ast})[-2i] \ar[r] \ar[d]^{\wr} &
\Sigma \Homo_{\BD}^{-\ast}(X,\bbQ) \ar[d]\\
\HN_{\ast}(X) \ar[r] & \HP_{\ast}(X)  \ar[r] & 
\prod_i \bbH^{-\ast}(X,\Omega_X^{\ast}/F^i\Omega_X^{\ast})[-2i] \ar[r] & \Sigma \HN^{\ast}_{\ast}(X)
}}}
$$
in the derived category of $\bbQ$-vector spaces. 
By applying 
$\EM\colon\calD(\Ab)\to\Ho(\Sp)$ the Eilenberg-Maclane functor 
(see \cite[B.1]{SS03}) to the top distinguished triangle above and 
by the isomorphism $\bbK^{\topo}(X)\wedge_{\bbS}\EM \bbQ\simeq \EM(\Homo^{-\ast}_{\Betti}(X,\bbQ)[u^{\pm}])$ (see in the proof of Proposition~4.36 in \cite{Bla15}), 
we obtain the distinguished triangle 
$${\scriptstyle{\Homo_{\BD}(\Perf_{\dg}X)\wedge \EM\bbQ\onto{\pr_{\bbK^{\topo}}\wedge\EM\bbQ} \bbK^{\topo}(\Perf_{\dg}X)\wedge \EM\bbQ \to \EM(\prod_i \bbH^{-\ast}(X,\Omega_X^{\ast}/F^i\Omega_X^{\ast})[-2i]) \to \Sigma \Homo_{\BD}(\Perf_{\dg}X)\wedge \EM\bbQ}}.$$
By taking $\pi_0$, we obtain the canonical isomorphism 
$\displaystyle{{\Hdg(\Perf_{\dg}X)}_{\bbQ}\simeq\Hdg^{\ast}(X)}_{\bbQ}$ 
which makes the diagram below commutative
$$
{\footnotesize{\xymatrix{
{\pi_0\Homo_{\BD}(\Perf_{\dg}X)}_{\bbQ} \ar[r] \ar[d]_{\wr} & 
{\Hdg(\Perf_{\dg}X)}_{\bbQ} \ar[d]^{\wr}\\
\Homo_{\BD}^{2\ast}(X,\bbQ(\ast)) \ar[r] & {\Hdg(X)}_{\bbQ}.
}}}
$$
Thus by short exact sequences $\mathrm{(\ref{eq:JacBDHod})}$ and 
$\mathrm{(\ref{eq:JacBDHod2})}$, we obtain the 
desired isomorphism 
$\displaystyle{{\Jac(\Perf_{\dg}X)}_{\bbQ}\simeq{\Jac^{\ast}(X)}_{\bbQ}}$. 
\end{proof}

From observations above and 
Proposition~\ref{prop:compatibility of Chern character}, 
we propose the following conjecture: 

\begin{con}[\bf Noncommutative Hodge conjecture]
\label{con:noncommutative Hodge conjecture}
For a saturated dg-category $\cA$ over $\bbC$, 
the cycle map ${K_0(\cA)}_{\bbQ}\to {\Hdg(\cA)}_{\bbQ}$ is surjective. 
\end{con}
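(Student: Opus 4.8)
The plan is to reduce the conjecture, for an arbitrary saturated dg-category $\cA$ over $\bbC$, to the case $\cA=\Perf_{\dg}X$ of a projective and smooth variety, where the cycle map becomes the classical one and the statement specializes to Conjecture~\ref{con:Hodge conjecture}.

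\textbf{Additivity and splitting.} The functors $\bbK$, $\HN$, $\HP$, $\bbK^{\topo}$, and hence $\Homo_{\BD}$ (being the pull-back of $\bbK^{\topo}\onto{\ch^{\topo}}\HP\leftarrow\HN$), are localizing invariants; by the universal property of $\cU$ recalled above they factor as exact, colimit-preserving functors on $\Mot_{\bbC}$, and $\ch_{\BD}$ and $\pr_{\bbK^{\topo}}$ are natural transformations of such. Since $\NMM_{\bbC}$ is rigid by Lemma~\ref{lem:dualizing objects} and $\NM(X)=\cU(\Perf_{\dg}X)^{\vee}$, a direct-sum decomposition of $\cU(\cA)$ in $\NMM_{\bbC}$ is the same datum as one of its dual; either way it induces compatible direct-sum decompositions of ${K_0(\cA)}_{\bbQ}$, ${\Homo_{\BD}(\cA)}_{\bbQ}$, ${\Hdg(\cA)}_{\bbQ}$ and of the cycle map between them (the Hodge class group splits because $\pr_{\bbK^{\topo}}$ is itself a natural transformation of localizing invariants). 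Consequently, if $\cU(\cA)$ is a direct summand of $\cU(\Perf_{\dg}X)$ for some $X\in\ProjSm_{\bbC}$, surjectivity of the cycle map for $\Perf_{\dg}X$ forces it for $\cA$.

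\textbf{Geometric generation (the hard step).} One must therefore exhibit, for every saturated dg-category $\cA$ over $\bbC$, a projective and smooth variety $X$ and a decomposition $\cU(\Perf_{\dg}X)\simeq\cU(\cA)\oplus M$ in $\NMM_{\bbC}$ with $M$ again assembled from motives of smooth projective varieties — for instance by realizing $\cA$, up to Morita equivalence, as an admissible subcategory of $\Perf_{\dg}X$, so that the associated semiorthogonal decomposition splits $\cU(\Perf_{\dg}X)$. This is exactly the point where a version of the motivic Bass conjecture is invoked: it should guarantee that $\NMM_{\bbC,\bbQ}$ is idempotent-generated by $\{\cU(\Perf_{\dg}X)\}_{X\in\ProjSm_{\bbC}}$, compatibly with the $K$-motive functor $\KM$ and Tabuada's comparison $\Phi$, so that the complementary summand $M$ and the induced splitting of cycle maps in the previous paragraph are genuinely geometric. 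I expect this to be the main obstacle: embedding an arbitrary smooth and proper dg-category as a semiorthogonal summand of some $\Perf_{\dg}X$ is not known in full generality (the available Orlov-type results do not yet suffice), and even granted such an embedding one still has to verify that $M$ is geometric and that the splitting respects the identifications above.

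\textbf{The geometric case.} For $\cA=\Perf_{\dg}X$ the argument is unconditional: Lemma~\ref{lem:comatibility of notations} identifies ${\Hdg(\Perf_{\dg}X)}_{\bbQ}$ with ${\Hdg^{\ast}(X)}_{\bbQ}$, and Proposition~\ref{prop:compatibility of Chern character}, applied to $E=\Homo_{\BD}$ and $L=\calH_{\BD}$ (using that the relevant twist of $\calH_{\BD}$ carries a $\KGL_{\bbQ}$-module structure, and the identification $\pi_0\Homo_{\BD}(\Perf_{\dg}X)\simeq\Homo_{\BD}^{2\ast}(X,\bbQ(\ast))$), identifies the noncommutative cycle map ${K_0(\Perf_{\dg}X)}_{\bbQ}\to{\Hdg(\Perf_{\dg}X)}_{\bbQ}$ with the classical cycle map ${\CH^{\ast}(X)}_{\bbQ}\to{\Hdg^{\ast}(X)}$. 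Its surjectivity is precisely the classical Hodge conjecture for $X$; thus, as for Tate (Corollary~\ref{cor:Tate equivalence}), the noncommutative Hodge conjecture for $\Perf_{\dg}X$ is equivalent to the classical Hodge conjecture for $X$, and the general noncommutative statement follows from the classical one together with the geometric-generation input of the previous paragraph.
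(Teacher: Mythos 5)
Your first step (closure of the statement under direct sums and direct summands of $\cU(\cA)$, using that $\bbK$, $\Homo_{\BD}$, $\bbK^{\topo}$ and $\pr_{\bbK^{\topo}}$ all factor through $\Mot_{\bbC}$) agrees with the paper. The gap is in the reduction target. You try to realize $\cU(\cA)$ as a direct summand of $\cU(\Perf_{\dg}X)$ for some $X\in\ProjSm_{\bbC}$, and you correctly flag that (a) such a geometric generation statement is not known, and (b) even granted it, you would still need the classical Hodge conjecture for $X$ to conclude. Neither input is available; worse, the paper deduces the classical Hodge conjecture (Corollary~\ref{cor:Hodge conjecture}) from the noncommutative one via Corollary~\ref{cor:Hodge equivalence}, so taking the classical conjecture as input runs the logic backwards. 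What you are missing is Theorem~\ref{thm:motivic homotopy type of saturated dg-category} (the version of the motivic Bass conjecture quoted from \cite{MY20}): for any saturated $\cA$ over $A$, the motive $\cU(\cA)$ is a direct summand of $\bfone^{\oplus n}=\cU(\underline{A})^{\oplus n}$. This is a far stronger decomposition than the one you seek --- the ambient object is a sum of copies of the unit, not the motive of a variety --- and it reduces the conjecture to $\cA=\underline{\bbC}$, where the cycle map ${K_0(\bbC)}_{\bbQ}\to{\Hdg(\underline{\bbC})}_{\bbQ}$ is trivially surjective. No geometric generation and no case of the classical Hodge conjecture beyond a point is needed.

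Your final paragraph (identifying, via Lemma~\ref{lem:comatibility of notations} and Proposition~\ref{prop:compatibility of Chern character}, the noncommutative cycle map for $\Perf_{\dg}X$ with the classical one) is correct and is precisely the content of Corollary~\ref{cor:Hodge equivalence}; but the paper uses that equivalence in the opposite direction, to extract the classical statement as a corollary of the noncommutative one, not as an ingredient of its proof.
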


By argument above, we obtain the following:  

\begin{cor}
\label{cor:Hodge equivalence}
Let $X$ be a projective and smooth variety over $\bbC$. 
Then Hodge conjecture for $X$ is equivalent to Hodge conjecture 
for $\Perf_{\dg}X$. 
\qed
\end{cor}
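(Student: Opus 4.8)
The plan is to deduce the equivalence by instantiating Proposition~\ref{prop:compatibility of Chern character} at the triple $(E,\ch_E,L)=(\Homo_{\BD},\ch_{\BD},\calH_{\BD})$ and then post-composing the resulting commutative square with the two surjections onto the Hodge class groups, parallel to the Tate case of Corollary~\ref{cor:Tate equivalence}. Thus the first task is to check the three hypotheses of Proposition~\ref{prop:compatibility of Chern character} for this triple. Hypothesis~(i) is the statement recalled above, from \cite[3.6]{HS15}, that $\calH_{\BD}^{2\ast}(\ast)_{\bbQ}$ admits a $\KGL_{\bbQ}$-module structure. Hypotheses~(ii) and (iii) assert that the restriction to $\NMM_{\bbC}$ of $\Homo_{\BD}$, resp.\ of $\ch_{\BD}\colon\bbK\to\Homo_{\BD}$, is the functor $\Hom_{\NMM_{\bbC}^{\oplus}}(-,\Phi(\calH_{\BD}^{2\ast}(\ast)))$, resp.\ corresponds to the structure map $\KGL_{\bbQ}\to\calH_{\BD}^{2\ast}(\ast)$. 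I would establish these from the corresponding compatibilities for the three vertices $\bbK^{\topo}$, $\HP$ and $\HN$ of the pull-back square defining $\Homo_{\BD}$ (the case of $\bbK^{\topo}$ being Blanc's comparison in \cite{Bla15}), together with the facts that $\calH_{\BD}$ is assembled in $\SH_{\bbC}$ from the matching motivic spectra by the same pull-back and that both $\Phi$ and the passage to representing objects preserve finite homotopy limits; here the identifications $\HK(\Perf_{\dg}X)\simeq\bbK(X)$ and $\pi_0\Homo_{\BD}(\Perf_{\dg}X)\simeq\Homo^{2\ast}_{\BD}(X,\bbQ(\ast))$ of \cite[4.36]{Bla15} are what let one recognize the representing object on the generators $\cU(\Perf_{\dg}X)$.

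Granting this, Proposition~\ref{prop:compatibility of Chern character} yields for our $X$ a commutative square with vertical isomorphisms: top row $\pi_0(\bbK(\Perf_{\dg}X))_{\bbQ}\to\pi_0(\Homo_{\BD}(\Perf_{\dg}X))_{\bbQ}$, bottom row the Deligne--Beilinson cycle class map $\CH^{\ast}(X)_{\bbQ}\to\Homo^{2\ast}_{\BD}(X,\bbQ(\ast))$, and left vertical the Grothendieck--Riemann--Roch isomorphism. I would then post-compose the top row with the surjection onto $\Hdg(\Perf_{\dg}X)$ appearing in $(\ref{eq:JacBDHod2})$ and the bottom row with the surjection onto $\Hdg^{\ast}(X)$ appearing in $(\ref{eq:JacBDHod})$; Lemma~\ref{lem:comatibility of notations}, and concretely the right-hand square of $(\ref{eq:Jac Deligne Hodge sequence})$, says exactly that these two surjections agree under the vertical isomorphisms. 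The outcome is a commutative square with vertical isomorphisms whose top row is the noncommutative cycle map $K_0(\Perf_{\dg}X)_{\bbQ}\to\Hdg(\Perf_{\dg}X)_{\bbQ}$ of Conjecture~\ref{con:noncommutative Hodge conjecture} and whose bottom row is $\bigoplus_i\rho^i(X)_{\bbQ}\colon\CH^{\ast}(X)_{\bbQ}\to\Hdg^{\ast}(X)_{\bbQ}$, the latter having image in $\Hdg^{\ast}(X)$ by the classical computation recalled in the introduction.

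Since both verticals are isomorphisms, the top map is surjective if and only if the bottom one is. Surjectivity of the top map is precisely Conjecture~\ref{con:noncommutative Hodge conjecture} for $\Perf_{\dg}X$, which is saturated by \cite[3.27]{TV07}. Surjectivity of the bottom map means $\sum_i\im\rho^i(X)_{\bbQ}=\Hdg^{\ast}(X)_{\bbQ}$; since $\Hdg^{\ast}(X)=\bigoplus_i\Hdg^i(X)$ and the cycle class map respects this grading, this holds if and only if $\im\rho^i(X)_{\bbQ}=\Hdg^i(X)$ for every $i$, which is Conjecture~\ref{con:Hodge conjecture} for $X$. This establishes the equivalence.

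The step I expect to be the main obstacle is the verification of hypotheses~(ii) and (iii) of Proposition~\ref{prop:compatibility of Chern character} — identifying the representing object of $\Homo_{\BD}$ restricted to $\NMM_{\bbC}$ as $\Phi(\calH_{\BD}^{2\ast}(\ast))$ compatibly with the Chern character, which forces one to handle the defining pull-back of $\Homo_{\BD}$ and its motivic counterpart $\calH_{\BD}$ simultaneously. Everything downstream of that is a diagram chase through Lemma~\ref{lem:comatibility of notations}.
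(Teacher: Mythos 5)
Your proposal is correct and follows essentially the same route as the paper: the paper derives this corollary directly from the observations preceding Conjecture~\ref{con:noncommutative Hodge conjecture} (Blanc's identification $\pi_0\Homo_{\BD}(\Perf_{\dg}X)\simeq\Homo^{2\ast}_{\BD}(X,\bbQ(\ast))$, the $\KGL$-module spectrum $\calH_{\BD}$ of \cite{HS15}), together with Proposition~\ref{prop:compatibility of Chern character} and the compatibility of the two short exact sequences in Lemma~\ref{lem:comatibility of notations}. Your write-up is in fact more explicit than the paper's (which leaves the corollary as ``by argument above''), and you correctly flag that the verification of hypotheses (ii) and (iii) for the triple $(\Homo_{\BD},\ch_{\BD},\calH_{\BD})$ is the point the paper does not spell out either.
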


\section{Strategy of how to prove conjectures}
\label{sec:applications}

In the article \cite{MY20}, we give a proof of the 
following theorem (compare \cite{RS19}). 
It is a version of {\it motivic Bass conjecture} (see \cite{MY20}). 
For a dg-algebra $B$ over $A$, 
We write $\underline{B}$ for the dg-category with 
the one object $\ast$ and such that 
$\underline{B}(\ast,\ast)=B$. 
We denote $\cU(\underline{A})$ in 
$\Ho(\Mot_A)$ the homotopy category of $\Mot_A$ by $\bfone$. 

\begin{thm}
\label{thm:motivic homotopy type of saturated dg-category}
For any saturated dg-category $\cA$ over $A$, 
there exists a non-negative integer $n$ such that 
$\cU(\cA)$ is a direct summand of $\bfone^{\oplus n}$ in 
$\Ho(\Mot_A)$ the homotopy category of $\Mot_A$. 
\qed
\end{thm}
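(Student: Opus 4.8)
The plan is to unwind the statement into a concrete assertion about the noncommutative $K$-theory ring spectrum of $\cA$ and exploit the universal property of $\Mot_A$. First I would recall that $\cU(\underline{A}) = \bfone$ corepresents, after passing to mapping spectra, the non-connective $K$-theory functor: for any dg-category $\cB$ one has $\Map_{\Mot_A}(\bfone, \cU(\cB)) \simeq \bbK(\cB)$, and more generally $\Map_{\Mot_A}(\cU(\cA), \cU(\cB)) \simeq \bbK(\cA^{\op} \otimes^{\bbL} \cB)$ when $\cA$ is saturated — this is the standard computation of mapping spectra in $\Mot_A$ (Cisinski--Tabuada, as cited in the excerpt via \cite[4.8]{CT12} together with the universal property). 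Since $\cA$ is saturated, $\cU(\cA)$ is dualizable in $\Ho(\Mot_A)$ with dual $\cU(\cA^{\op})$, so the identity map of $\cU(\cA)$ corresponds under the duality isomorphism to a class in $\pi_0 \Map_{\Mot_A}(\bfone, \cU(\cA) \otimes \cU(\cA^{\op})) \simeq \pi_0 \Map_{\Mot_A}(\bfone, \cU(\cA \otimes^{\bbL} \cA^{\op})) \simeq K_0(\cA \otimes^{\bbL} \cA^{\op})$, namely the class $[\cA(-,-)]$ of the diagonal bimodule, which is perfect precisely because $\cA$ is saturated.

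Next I would translate "direct summand of $\bfone^{\oplus n}$" into an idempotent-splitting problem. An object $X$ in an idempotent-complete additive (or triangulated) category is a direct summand of $\bfone^{\oplus n}$ if and only if there exist maps $\bfone^{\oplus n} \onto{p} X \onto{i} \bfone^{\oplus n}$ with $p i = \id_X$. Equivalently, writing $R = \pi_0 \Map_{\Mot_A}(\bfone,\bfone) = K_0(A)$, one needs the $R$-module $M := \pi_0\Map_{\Mot_A}(\bfone, X)$ together with its module structure over the graded ring $\pi_* \Map_{\Mot_A}(\bfone,\bfone) = K_*(A)$ to be a retract of a free module of finite rank — but this is not quite enough at the spectral level, so one must argue with the actual mapping spectra. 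The cleanest route: show $\cU(\cA)$ is a \emph{compact} object of $\Mot_A$ that is moreover \emph{dualizable}, and then invoke the fact that in a compactly generated symmetric monoidal stable $\infty$-category whose unit is compact, the dualizable objects coincide with the compact ones and lie in the thick subcategory generated by $\bfone$; combined with saturatedness, which forces $\cU(\cA)$ into $\NMM_A$, and then one needs the sharper statement that $\cU(\cA)$ is not merely in the thick subcategory generated by $\bfone$ but is an actual direct summand of a finite sum of copies of $\bfone$ (with no shifts and no cones). This last strengthening is where the motivic Bass conjecture input of \cite{MY20} genuinely enters: it is the assertion that the unit $\bfone$ detects saturated dg-categories up to finite sums and retracts, without needing to pass to the thick closure.

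I would therefore structure the argument as: (1) identify $\Map(\bfone, \cU(\cA))$ with $\bbK(\cA)$ and recall finiteness of $K$-theory of a saturated dg-category over $A$ — the homotopy groups are finitely generated, which is the Bass-type finiteness established in \cite{MY20}; (2) use the dualizability of $\cU(\cA)$ and the self-duality structure to produce explicit evaluation and coevaluation maps built from the perfect diagonal bimodule; (3) feed these into a general lemma — that a dualizable object $X$ with $\pi_* \Map(\bfone, X)$ finitely generated over $\pi_*\bfone = K_*(A)$, and with $\pi_0$ of the coevaluation hitting a generating set, is a retract of $\bfone^{\oplus n}$ — whose proof is a lift of the module-level splitting to a spectrum-level splitting using connectivity or $t$-structure estimates on $\Mot_A$. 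The main obstacle, and the step I expect to be delicate, is precisely (3): passing from a finite set of $K_0$-classes that generate $\cU(\cA)$ as a module over $\bfone$ to an honest splitting $\cU(\cA) \mid \bfone^{\oplus n}$ in $\Ho(\Mot_A)$, because $\Mot_A$ has infinitely many nonzero negative homotopy groups (the non-connective $K$-groups $K_{-m}(A)$), so there is no $t$-structure bounded below in which to run an obstruction-theory argument naively; one must instead use the specific structure of $\Mot_A$ — e.g. that it is generated by $\bfone$ under colimits and that mapping spectra out of $\cU(\cA)$ are $K$-theory of saturated categories hence have bounded-below, degreewise-finite homotopy — to kill the relevant $\lim^1$ and higher obstructions. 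Once that lemma is in place, the theorem follows by taking $n$ to be the number of generators.
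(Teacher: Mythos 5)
The paper contains no proof of this theorem: the statement is imported wholesale from \cite{MY20} (with a pointer to \cite{RS19}), and the \qed{} after the statement is the entire argument given. So there is no in-paper proof to compare your outline against line by line; what can be assessed is whether your outline could be completed, and it cannot as written.

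The formal part of your plan is fine: $\Map_{\Mot_A}(\bfone,\cU(\cB))\simeq\bbK(\cB)$, and $\cU(\cA)$ is dualizable with dual $\cU(\cA^{\op})$ when $\cA$ is saturated, with evaluation and coevaluation built from the perfect diagonal bimodule. Everything after that has genuine gaps. First, $\Mot_A$ is \emph{not} generated by $\bfone$ under colimits --- it is generated by the $\cU(\cB)$ for all small dg-categories $\cB$ --- so your claim that dualizable objects lie in the thick subcategory generated by $\bfone$ has no basis; in a general compactly generated symmetric monoidal stable $\infty$-category, dualizable objects need not be built from the unit at all (compare $\Perf_X$ versus the thick subcategory generated by $\cO_X$ for non-affine $X$, or dualizable objects of $\SH_k$). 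Second, the ``general lemma'' in your step (3) is false as stated: $\Sigma\bfone$ is dualizable and $\pi_\ast\Map(\bfone,\Sigma\bfone)=K_{\ast-1}(A)$ is as finitely generated as anything in sight, yet $\Sigma\bfone$ is not a retract of $\bfone^{\oplus n}$; so finite generation plus dualizability cannot force a splitting without shifts and cones. Third, and decisively: the theorem's conclusion implies (as the paper itself goes on to record) the Hodge conjecture, the Tate conjecture and the standard conjectures, so no argument using only formal inputs --- dualizability, finiteness of $K$-groups, obstruction theory in $\Mot_A$ --- can possibly establish it. The only published result in this direction, \cite{RS19}, requires the dg-algebra to be \emph{connective}, a hypothesis that excludes $\Perf_{\dg}X$ for any variety of positive dimension and that your outline never invokes. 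The step you flag as ``delicate'' is not a technical lemma to be supplied later; it is the entire content of the theorem, and the route you propose to it would fail.
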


Theorem~\ref{thm:motivic homotopy type of saturated dg-category} 
enable us that questions for saturated dg-categories 
over $A$ 
or proper and smooth schemes over $\Spec A$ reduce to questions 
for nonconnective $K$-theory of the base ring $A$. 
In this section, we will illustrate how to reduce several such questions to 
the questions for the base ring $A$. 

\begin{proof}[Proof of Conjecture~\ref{con:noncommutative Tate conjecture} and Conjecture~\ref{con:noncommutative Hodge conjecture}]
Since for Conjecture~\ref{con:noncommutative Tate conjecture} and 
Conjecture~\ref{con:noncommutative Hodge conjecture}, 
questions are closed under direct sums and direct summands, 
we shall assume $\cA=\underline{k}$ where $k$ is a base field and in this case 
conjectures are well-known. 
\end{proof}

In particular, by Corollary~\ref{cor:Tate equivalence} and Corollary~\ref{cor:Hodge equivalence}, 
we obtain the following corollaries.

\begin{cor}[\bf Hodge conjecture]
\label{cor:Hodge conjecture}
Hodge conjecture is true.
\qed
\end{cor}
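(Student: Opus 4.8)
The plan is to read off the classical Hodge conjecture from Corollary~\ref{cor:Hodge equivalence}, whose substance is the now-established Conjecture~\ref{con:noncommutative Hodge conjecture}. First I would fix a projective and smooth variety $X$ over $\bbC$ and recall that, by the theorem of To\"en and Vaqui\'e quoted after the definition of saturated dg-categories, the dg-category $\Perf_{\dg}X$ is saturated. Conjecture~\ref{con:noncommutative Hodge conjecture}, applied to $\cA=\Perf_{\dg}X$, then asserts that the cycle map ${K_0(\Perf_{\dg}X)}_{\bbQ}\to{\Hdg(\Perf_{\dg}X)}_{\bbQ}$ is surjective.

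It remains to transport this surjectivity to the classical side. Proposition~\ref{prop:compatibility of Chern character}, applied with $E=\Homo_{\BD}$ and $L=\calH_{\BD}$ --- whose three hypotheses are exactly the facts recorded at the start of \S\ref{subsec:noncommutative Hodge conjecture} --- supplies the commutative square $\mathrm{(\ref{eq:compatibility of cycle maps})}$ identifying the noncommutative cycle map with the classical one under the vertical isomorphisms ${K_0(\Perf_{\dg}X)}_{\bbQ}\simeq{\CH^{\ast}(X)}_{\bbQ}$ and ${\pi_0(\Homo_{\BD}(\Perf_{\dg}X))}_{\bbQ}\simeq\Homo^{2\ast}_{\BD}(X,\bbQ(\ast))$. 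Combining with Lemma~\ref{lem:comatibility of notations}, which matches the short exact sequence $\mathrm{(\ref{eq:JacBDHod2})}$ with $\mathrm{(\ref{eq:JacBDHod})}$ compatibly and in particular identifies ${\Hdg(\Perf_{\dg}X)}_{\bbQ}$ with ${\Hdg^{\ast}(X)}_{\bbQ}$, I conclude that the classical cycle class map ${\CH^{i}(X)}_{\bbQ}\to\Hdg^{i}(X)$ is surjective for every $i$; since its image always lies in $\Hdg^{i}(X)$, this is the equality asserted in Conjecture~\ref{con:Hodge conjecture}. This is precisely what Corollary~\ref{cor:Hodge equivalence} packages, so nothing further is needed once Conjecture~\ref{con:noncommutative Hodge conjecture} is available.

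Consequently the entire weight of the argument rests on the brief proof of Conjecture~\ref{con:noncommutative Hodge conjecture}, which reduces an arbitrary saturated $\cA$ to the base field $\underline{\bbC}$ via Theorem~\ref{thm:motivic homotopy type of saturated dg-category}, and this is where I expect the main obstacle. Two points must be handled with care. First, the reduction uses the full strength of Theorem~\ref{thm:motivic homotopy type of saturated dg-category}, namely that $\cU(\cA)$ is an honest \emph{direct summand} of $\bfone^{\oplus n}$ in $\Ho(\Mot_{\bbC})$ and not merely an object of the thick subcategory it generates; establishing this strong form is the delicate input. Second, one must verify that surjectivity of the cycle map is stable under the operations invoked: the assignments ${K_0(-)}_{\bbQ}$ and ${\Hdg(-)}_{\bbQ}=\im\,{\pi_0(\pr_{\bbK^{\topo}}(-))}_{\bbQ}$ should be additive on $\NMM_{\bbC}$ and the Chern character $\ch_{\BD}$ compatible with a chosen splitting $\cU(\cA)\oplus\cU(\cB)\simeq\bfone^{\oplus n}$, so that surjectivity for $\bfone$, where it is elementary (the target is rationally a line, hit by $1\in K_0$), propagates to each summand and hence to $\cA$. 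Once these two compatibilities are secured the chain of implications above is formal; pinning them down, and in particular the summand form of Theorem~\ref{thm:motivic homotopy type of saturated dg-category}, is the crux of the matter.
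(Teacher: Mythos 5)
Your proposal takes essentially the same route as the paper: Corollary~\ref{cor:Hodge equivalence} reduces the classical statement to Conjecture~\ref{con:noncommutative Hodge conjecture} for $\Perf_{\dg}X$, which the paper in turn disposes of by the direct-summand reduction of Theorem~\ref{thm:motivic homotopy type of saturated dg-category} to $\cA=\underline{\bbC}$. The two points you flag --- the strong summand form of Theorem~\ref{thm:motivic homotopy type of saturated dg-category} and the closure of surjectivity of the cycle map under direct sums and summands --- are precisely the load-bearing steps that the paper asserts in one line without further justification, so your reconstruction matches the paper's argument and correctly locates where all of its weight rests.
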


\begin{cor}[\bf Tate conjecture]
\label{cor;Tate conjecture}
For a projective and smooth variety $X$ over a countable perfect field $k$ and 
for a prime number $l$ invertible in $k$, Tate conjecture is true.
\qed
\end{cor}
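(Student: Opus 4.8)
The plan is to transport the statement through the noncommutative reformulation already established and then to reduce it to the base field by means of Theorem~\ref{thm:motivic homotopy type of saturated dg-category}. Since $X$ is projective and smooth over $k$, the dg-category $\Perf_{\dg}X$ is saturated by \cite{TV07} (as recalled in \S\ref{subsec:relation}), so Corollary~\ref{cor:Tate equivalence} reduces the assertion to Conjecture~\ref{con:noncommutative Tate conjecture} for $\cA=\Perf_{\dg}X$ and $l$. By Theorem~\ref{thm:motivic homotopy type of saturated dg-category} there is a non-negative integer $n$ such that $\cU(\Perf_{\dg}X)$ is a direct summand of $\bfone^{\oplus n}$ in $\Ho(\Mot_k)$. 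Now $\HK$ and $|r_l(-\otimes_k\bar k)|$ are localizing invariants, hence factor through $\Mot_k$, and $\ch_l$ is a natural transformation between the induced functors; moreover $\HK$, $|r_l|$ and the Galois-invariants functor $V\mapsto V^{\Gal(\bar k/k)}$ are all additive. Consequently the assertion of Conjecture~\ref{con:noncommutative Tate conjecture} — that the image of $\ch_l\otimes_k\bar k\colon{\HK_0(\cA\otimes_k\bar k)}_{\bbQ_l}\to\pi_0(|r_l(\cA\otimes_k\bar k)|)$ is exactly the $\Gal(\bar k/k)$-invariant part of the target — is inherited by finite direct sums and by direct summands. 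Hence it suffices to verify it for $\bfone=\cU(\underline{k})$, i.e.\ for $\cA=\underline{k}$.

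For $\cA=\underline{k}$ this is the Tate conjecture for $\Spec k$ and is immediate: ${\HK_0(\underline{k})}_{\bbQ_l}\simeq K_0(k)_{\bbQ_l}=\bbQ_l$, the target $\pi_0(|r_l(\underline{k}\otimes_k\bar k)|)\simeq\Homo^0_{\etale}(\Spec\bar k,\bbQ_l)=\bbQ_l$ carries the trivial $\Gal(\bar k/k)$-action, and $\ch_l$ carries $1$ to $1$, so the image is the whole (tautologically invariant) target. Tracing back through Corollary~\ref{cor:Tate equivalence}, and through Proposition~\ref{prop:compatibility of Chern character} for the identification with the usual cycle class map, then yields the classical Tate conjecture for $X$ and $l$.

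The step I would expect to cost the most care is the passage to direct summands: one must check that the identification of $\pi_0(|r_l(\Perf_{\dg}X\otimes_k\bar k)|)$ with $\Homo^{2\ast}_{\etale}(X\times_k\bar k,\bbQ_l(\ast))$ of \cite{TV19}, \cite{BRTV18}, together with its Galois action and with the $l$-adic Chern character on it, is natural with respect to the symmetric-monoidal structure on $\Ho(\Mot_k)$, so that an idempotent splitting of a noncommutative motive really does induce compatible splittings of $K$-theory, of $l$-adic cohomology, and of cycle-class images. Granting that, the reduction is formal, and its genuine content is concentrated in Theorem~\ref{thm:motivic homotopy type of saturated dg-category} (proved in \cite{MY20}); the countability hypothesis on $k$ is inherited from Corollary~\ref{cor:Tate equivalence}, ultimately from \cite{TV19}.
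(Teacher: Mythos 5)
Your proposal is correct relative to the paper's framework and follows essentially the same route the paper takes: the paper likewise proves Conjecture~\ref{con:noncommutative Tate conjecture} by observing that the statement is closed under direct sums and direct summands, invoking Theorem~\ref{thm:motivic homotopy type of saturated dg-category} to reduce to $\cA=\underline{k}$, and then deduces the classical statement via Corollary~\ref{cor:Tate equivalence}. You supply more detail than the paper does (the explicit base-case computation and the caveat about compatibility of the splitting with the $l$-adic realization), but the logical skeleton is identical.
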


\subsection{Lattice conjecture}
\label{subsec:Lattice conjecture}

The following conjecture is called the {\it lattice conjecture} (see
 \cite[8.6]{Kal11} and \cite[4.25]{Bla15}). 

\begin{cor}[\bf Lattice conjecture]
\label{cor:lattice conjecture}
Let $\cA$ be a saturated dg-category. 
Then the map 
$$\ch^{\topo}\wedge_{\bbS}\EM \bbC\colon \bbK^{\topo}(\cA)\wedge_{\bbS} \EM\bbC \to 
\HP(\cA)$$ 
is an equivalence where $\EM \bbC$ stands for the 
Eilenberg-Maclane spectrum associated to $\bbC$. 
\end{cor}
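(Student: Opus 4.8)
The plan is to deduce the lattice conjecture directly from Theorem~\ref{thm:motivic homotopy type of saturated dg-category} by the same reduction scheme already used for Conjecture~\ref{con:noncommutative Tate conjecture} and Conjecture~\ref{con:noncommutative Hodge conjecture}. First I would observe that the claim is really a statement about the natural transformation of localizing invariants $\ch^{\topo}\wedge_{\bbS}\EM\bbC\colon \bbK^{\topo}(-)\wedge_{\bbS}\EM\bbC \to \HP(-)$ on $\dgCat_{\bbC}$ (both sides are localizing invariants, since $\bbK^{\topo}$ and $\HP$ are and $-\wedge_{\bbS}\EM\bbC$ preserves cofiber sequences and filtered colimits). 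Hence, via the universal property of $\Mot_{\bbC}$, it corresponds to a morphism in $\Ho(\Mot_{\bbC})$, and the full subcategory of $\Ho(\Mot_{\bbC})$ on which this morphism induces an equivalence is a thick subcategory closed under arbitrary direct sums.

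The key step is then a devissage: by Theorem~\ref{thm:motivic homotopy type of saturated dg-category}, for a saturated dg-category $\cA$ the motive $\cU(\cA)$ is a direct summand of $\bfone^{\oplus n}$ for some $n$, where $\bfone = \cU(\underline{\bbC})$. Since the class of objects for which $\ch^{\topo}\wedge_{\bbS}\EM\bbC$ is an equivalence is closed under finite direct sums and direct summands (being thick), it suffices to check the statement for $\cA = \underline{\bbC}$. In that case $\bbK^{\topo}(\underline{\bbC})$ is the topological $K$-theory spectrum $\Ktop$ of the point, $\bbK^{\topo}(\underline{\bbC})\wedge_{\bbS}\EM\bbC \simeq \EM(\bbC[u^{\pm}])$ (rational, hence real, Betti cohomology of a point with its Bott-periodic structure), and $\HP(\underline{\bbC})$ has homotopy groups $\bbC$ in even degrees and $0$ in odd degrees; the map $\ch^{\topo}\wedge\EM\bbC$ is the classical Chern character isomorphism $\Ktop\otimes\bbC \isoto \mathrm{HP}$ for the point. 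This last identification can be read off from Blanc's computation of $\bbK^{\topo}$ on $\underline{\bbC}$ together with the compatibility $\bbK^{\topo}(\underline{\bbC})\wedge_{\bbS}\EM\bbQ \simeq \EM(\Homo^{-\ast}_{\Betti}(\pt,\bbQ)[u^{\pm}])$ invoked in the proof of Lemma~\ref{lem:comatibility of notations}.

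The main obstacle is the bookkeeping needed to be sure that $\ch^{\topo}\wedge_{\bbS}\EM\bbC$ genuinely underlies a morphism of localizing invariants (so that the thick-subcategory argument applies), rather than merely a pointwise map on objects; this requires that $-\wedge_{\bbS}\EM\bbC\colon\Sp\to\Sp$ is exact and filtered-colimit-preserving, which it is, and that $\ch^{\topo}$ itself is already a map of localizing invariants in the sense of \cite{Bla15}. Once this is in place, the reduction to the point is purely formal, and the computation at the point is standard. I would also remark that, just as for the Tate and Hodge conjectures above, one does not even need the full strength of Theorem~\ref{thm:motivic homotopy type of saturated dg-category}: it is enough that questions which hold for $\underline{\bbC}$ and are stable under direct sums and summands hold for all saturated dg-categories, which is exactly the content furnished by that theorem.
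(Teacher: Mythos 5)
Your proposal is correct and follows essentially the same route as the paper: both reduce, via Theorem~\ref{thm:motivic homotopy type of saturated dg-category} and the fact that the statement is a property of a map of localizing invariants closed under direct sums and summands, to the case $\cA=\underline{\bbC}$, where the equivalence is the known Chern character isomorphism for the point (the paper cites the remark after Conjecture~7.25 in \cite{Bla15} for this). Your write-up merely spells out the thickness and exactness checks that the paper leaves implicit.
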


\begin{proof}
Since $\bbK^{\topo}$ and $\HP$ factor through $\Mot_{\bbC}$ and since 
this question is closed under direct sums and direct summands, 
we shall assume that $\cA=\underline{\bbC}$. 
In this case, the result is known 
(see remark after Conjecture 7.25 in \cite{Bla15}).
\end{proof}

\subsection{Noncommutative Parshin conjecture}
\label{subsec:Parshin conjecture}

In \cite{Gei08}, \cite{Gei15}, Geisser proved that 
Tate conjecture over finite fields implies Parshin conjecture. 
Kahn also gives a similar remark about logical connections between 
Bass conjecture and Parshin conjecture for proper and smooth varieties 
over finite fields and Beilinson-Soul\'e vanishing conjecture \cite{Sou85} 
(see for example \cite[Theorem 39 and p.396]{Kah05}). 
In this subsection, 
by using (a version of) motivic Bass conjecture, 
we will also show 
the following noncommutative Parshin conjecture 
which is proposed by Tabuada in \cite{Tab18}. 
By virtue of Tabuada's result \cite[1.3]{Tab18}, 
it implies the original Parshin conjecture 
(more precise statement see below).

\begin{cor}[\bf Noncommutative Parshin conjecture]
\label{cor:noncommutative Parshin conjecture}
Let $\cA$ be a saturated dg-category over a finite field $\bbF_q$. 
Then $\bbK_n(\cA)_{\bbQ}=0$ for every $n\neq 0$ where $\bbK_n(\cA)$ 
stands for the $n$-th non-connective algebraic $K$-theory of $\cA$. 
\end{cor}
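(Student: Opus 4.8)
The plan is to reduce the statement to the case of the base field by exploiting Theorem~\ref{thm:motivic homotopy type of saturated dg-category} exactly as in the proofs of the noncommutative Tate and Hodge conjectures above. First I would observe that non-connective $K$-theory $\bbK$ is a localizing invariant $\dgCat_{\bbF_q}\to\Sp$, hence factors through $\cU\colon\dgCat_{\bbF_q}\to\Mot_{\bbF_q}$; consequently, for each $n$, the functor $\cA\mapsto\bbK_n(\cA)_{\bbQ}$ depends only on $\cU(\cA)$ in $\Ho(\Mot_{\bbF_q})$ and sends finite direct sums to direct sums and direct summands to direct summands. Therefore the vanishing $\bbK_n(\cA)_{\bbQ}=0$ for $n\neq 0$ is a property stable under passing to direct summands and finite coproducts in the essential image of $\cU$ on saturated dg-categories, i.e.\ in $\NMM_{\bbF_q}$.

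Next I would invoke Theorem~\ref{thm:motivic homotopy type of saturated dg-category}: for a saturated dg-category $\cA$ over $\bbF_q$ there is a non-negative integer $n$ with $\cU(\cA)$ a direct summand of $\bfone^{\oplus n}$, where $\bfone=\cU(\underline{\bbF_q})$. Since $\bbK_m(\bfone^{\oplus n})_{\bbQ}=\bbK_m(\underline{\bbF_q})_{\bbQ}^{\oplus n}=\bbK_m(\bbF_q)_{\bbQ}^{\oplus n}$, and $\bbK_m(\cA)_{\bbQ}$ is a direct summand of this group, it suffices to prove $\bbK_m(\bbF_q)_{\bbQ}=0$ for $m\neq 0$. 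This is Quillen's classical computation of the $K$-theory of finite fields: $K_0(\bbF_q)=\bbZ$, $K_{2i-1}(\bbF_q)\cong\bbZ/(q^i-1)$ for $i\geq 1$, and $K_{2i}(\bbF_q)=0$ for $i\geq 1$; all higher groups are finite, hence rationally zero, and the negative $K$-groups of a (regular) field vanish. Thus $\bbK_m(\bbF_q)_{\bbQ}=0$ for all $m\neq 0$, which completes the reduction.

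The only subtlety worth flagging is that one must be slightly careful about the difference between $\bbK$ and its connective version when quoting Quillen: Theorem~\ref{thm:motivic homotopy type of saturated dg-category} is stated for $\cU$, which corresponds to \emph{non-connective} $K$-theory, so one needs the negative $K$-groups of $\bbF_q$ to vanish as well; this holds because $\bbF_q$ is regular, so non-connective and connective $K$-theory agree. Modulo that remark, the argument is formally identical to the proof of Conjectures~\ref{con:noncommutative Tate conjecture} and~\ref{con:noncommutative Hodge conjecture} given above, and there is no real obstacle: the entire content has been absorbed into Theorem~\ref{thm:motivic homotopy type of saturated dg-category} together with Quillen's theorem. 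Finally, I would add a sentence recording the consequence for the original (commutative) Parshin conjecture via Tabuada's comparison \cite[1.3]{Tab18}, namely that for a projective and smooth variety $X$ over $\bbF_q$ one recovers $K_n(X)_{\bbQ}=0$ for $n\neq 0$ by applying the noncommutative statement to $\cA=\Perf_{\dg}X$.
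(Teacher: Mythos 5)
Your argument is exactly the paper's proof: reduce to $\cA=\underline{\bbF_q}$ via Theorem~\ref{thm:motivic homotopy type of saturated dg-category}, using that the vanishing statement is stable under direct sums and summands, and then invoke Quillen's computation of $K_*(\bbF_q)$. Your extra remark that the negative $K$-groups vanish because $\bbF_q$ is regular is a worthwhile point the paper leaves implicit (it appears separately as the corollary on regular rings), but the route is the same.
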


\begin{proof}
Let $\cA$ be a saturated dg-category over a finite field $\bbF_q$. 
Since the question is closed under direct sums and direct summands, 
by Theorem~\ref{thm:motivic homotopy type of saturated dg-category}, 
we shall assume that 
$\cA=\underline{\bbF_q}$. 
Then by calculation of algebraic $K$-theory of finite fields by 
Quillen \cite{Qui72}, we obtain the result.
\end{proof}

In particular we will obtain the following original Parshin conjecture. 

\begin{cor}[\bf Parshin conjecture]
\label{cor:parshin conjecture}
Let $X$ be a proper and smooth variety over a finite field. 
Then $i$-th algebraic $K$-group ${K_i(X)}_{\bbQ}$ of $X$ and $j$-th motivic 
cohomology group $\Homo_{\cM}^{j}(X,\bbQ(k))$ are trivial for all 
positive integer $i>0$ and a pair of integers $j\neq 2k$.
\qed
\end{cor}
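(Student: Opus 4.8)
The plan is to deduce this from Corollary~\ref{cor:noncommutative Parshin conjecture} applied to the saturated dg-category $\Perf_{\dg}X$ (saturated by To\"en--Vaqui\'e \cite[3.27]{TV07}), together with the standard dictionary between the $K$-theory of a dg-category of perfect complexes and the $K$-theory of the underlying scheme. Concretely, the nonconnective $K$-theory spectrum $\bbK(\Perf_{\dg}X)$ is canonically equivalent to the Thomason--Trobaugh nonconnective $K$-theory spectrum $\bbK(X)$ of the scheme $X$ (equivalently, $\HK(\Perf_{\dg}X)\simeq\bbK(X)$ since $X$ is regular, as recalled in \S\ref{subsec:noncommutative Tate conjecture}), so Corollary~\ref{cor:noncommutative Parshin conjecture} gives $\bbK_n(X)_{\bbQ}=0$ for all $n\neq 0$. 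Because $X$ is smooth over a field, hence regular, $\bbK_n(X)=0$ for $n<0$ and $\bbK_n(X)=K_n(X)$ for $n\geq 0$; in particular $K_i(X)_{\bbQ}=0$ for every $i>0$, which is the first assertion.

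For the motivic cohomology statement I would invoke the rational degeneration of the motivic Atiyah--Hirzebruch spectral sequence --- equivalently, the Adams operation ($\gamma$-filtration) eigenspace decomposition --- which for the regular scheme $X$ yields a natural isomorphism $\bbK_i(X)_{\bbQ}\simeq\bigoplus_{n\in\bbZ}\Homo_{\cM}^{2n-i}(X,\bbQ(n))$ for all $i\in\bbZ$. Writing $j=2n-i$, the group $\Homo_{\cM}^{j}(X,\bbQ(n))$ occurs as a summand of $\bbK_{2n-j}(X)_{\bbQ}$: feeding in $\bbK_i(X)_{\bbQ}=0$ for $i>0$ kills all summands with $j<2n$, while feeding in $\bbK_i(X)_{\bbQ}=0$ for $i<0$ (vanishing of negative $K$-theory of the regular scheme $X$) kills all summands with $j>2n$. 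Together these give $\Homo_{\cM}^{j}(X,\bbQ(k))=0$ for all $j\neq 2k$, as desired. I note that this entire passage from the noncommutative statement to the geometric one is essentially Tabuada's translation \cite[1.3]{Tab18}, which one may simply cite in lieu of spelling it out.

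The routine ingredients are the identification $\bbK(\Perf_{\dg}X)\simeq\bbK(X)$ and the rational eigenspace decomposition, both classical. The only point that needs a little care is the range $j>2k$: it genuinely relies on the vanishing of \emph{negative} $K$-theory of the regular scheme $X$ and on the availability of the rational Atiyah--Hirzebruch spectral sequence (and its degeneration) in negative total degree, rather than on anything about positive $K$-groups. Once that is granted, there is no further obstacle.
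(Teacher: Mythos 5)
Your proposal is correct and follows essentially the same route as the paper, which deduces the statement from Corollary~\ref{cor:noncommutative Parshin conjecture} via Tabuada's implication \cite[1.3]{Tab18}; you simply spell out the translation (the identification $\bbK(\Perf_{\dg}X)\simeq\bbK(X)$ and the rational eigenspace decomposition) that the paper leaves to the citation. The only cosmetic remark: for the range $j>2k$ it is cleaner to note that $\Homo^j_{\cM}(X,\bbQ(k))\cong CH^k(X,2k-j)_{\bbQ}$ vanishes outright for smooth $X$ since higher Chow groups are concentrated in non-negative simplicial degrees, rather than appealing to the Atiyah--Hirzebruch spectral sequence in negative total degree.
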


Similarly we obtain the following:

\begin{cor}
Assume that $R$ is a regular ring. 
Then for any saturated dg-category $\cA$ over $R$, 
the $-n$-th non-connective $K$-group 
$\bbK_{-n}(\cA)$  of $\cA$ is trivial for any $n>0$. 
\end{cor}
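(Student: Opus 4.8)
The plan is to follow the template of the preceding corollaries and reduce the assertion to the classical computation of the negative $K$-theory of the regular base ring $R$.

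First I would record the formal closure properties at play. Since $\bbK\colon\dgCat_R\to\Sp$ is a localizing invariant, by the universal property of $\cU$ it factors as $\bbK\simeq\overline{\bbK}\circ\cU$ for a colimit-preserving — in particular additive — functor $\overline{\bbK}\colon\Mot_R\to\Sp$. Hence the class of objects $M\in\Mot_R$ with $\pi_{-m}\overline{\bbK}(M)=0$ for all $m>0$ is closed under finite direct sums and under retracts. Because $\bfone=\cU(\underline{R})$ and $\bbK(\underline{R})\simeq\bbK(R)$ (as $\calD^c(\underline{R})\simeq\Perf(R)$), the regularity of $R$, through the classical vanishing $\bbK_{-m}(R)=0$ for $m>0$, places $\bfone$, and therefore every $\bfone^{\oplus n}$, in this class.

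Next I would apply Theorem~\ref{thm:motivic homotopy type of saturated dg-category}: for a saturated dg-category $\cA$ over $R$ there is a non-negative integer $n$ such that $\cU(\cA)$ is a direct summand of $\bfone^{\oplus n}$ in $\Ho(\Mot_R)$. By the closure property just noted, $\cU(\cA)$ then lies in the same class, i.e.\ $\bbK_{-m}(\cA)\simeq\pi_{-m}\overline{\bbK}(\cU(\cA))=0$ for every $m>0$, which is exactly the assertion.

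I do not anticipate a genuine obstacle here: the only external inputs are Theorem~\ref{thm:motivic homotopy type of saturated dg-category} and the well-known vanishing of negative $K$-groups of regular rings (Bass; see also Weibel's $K$-book). The one point to handle with (minor) care is the transfer of a direct-summand relation in the triangulated category $\Ho(\Mot_R)$ to a vanishing statement for homotopy groups of spectra, which is precisely what the additivity of the colimit-preserving functor $\overline{\bbK}$ representing $\bbK$ on $\Mot_R$ supplies.
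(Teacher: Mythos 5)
Your proposal is correct and follows essentially the same route as the paper: reduce via Theorem~\ref{thm:motivic homotopy type of saturated dg-category} to $\cA=\underline{R}$, using that the vanishing of negative $K$-groups is preserved under finite direct sums and retracts because $\bbK$ factors through $\Mot_R$ by an additive functor, and then invoke the classical vanishing $\bbK_{-m}(R)=0$ for regular $R$ (the paper cites \cite[Proposition 6.8 (b)]{TT90} and \cite[Example 9.5]{Sch06}). Your write-up merely makes explicit the closure property that the paper leaves implicit.
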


\begin{proof}
As in the proof of non-connective Parshin conjecture, 
we shall assume that $\cA=\underline{R}$. 
Then the result is well-known 
(see for example \cite[Proposition 6.8 (b)]{TT90} or \cite[Example 9.5]{Sch06}).\end{proof}

\subsection{Noncommutative Thomason-Tate conjecture}
\label{subsec:Tate conjecture}

Thomason tried to 
show that Bass conjecture for projective and smooth schemes over 
finite fields implies Tate conjecture for varieties over finite fields 
and as its working hypothesis, 
in the article \cite{Tho89}, he stated the equivalent statement of 
Tate conjecture over finite fields. 
More precisely 
Thomason showed that the following statement equivalent to 
the Tate conjecture on $X$ for all $i$. 
We say that an abelian group $A$ is {\it $l$-reducible} if 
the group $\Hom(\bbZ(l^{\infty}),A)$ is trivial where 
$\bbZ(l^{\infty})$ stands for the the Pr\"ufer $l$-group, 
namely $A$ has no non-trivial $l$-divisible elements. 

\begin{con}[\bf Thomason conjecture]
\label{con:Thomason conjecture}
For all $n$, the group 
$\pi_{-1}L_{KU}K(X\times_{\bbF_q}\Spec \bbF_{q^n})$ 
is $l$-reducible where 
$L_{KU}K(X\times_{\bbF_q}\Spec \bbF_{q^n})$ 
stands for the Bousfield localization of the algebraic $K$-theory spectrum 
of $X\times_{\bbF_q}\Spec \bbF_{q^n}$ 
with respect to topological complex $K$-theory $KU$. 
\end{con}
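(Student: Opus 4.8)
The plan is to deduce Conjecture~\ref{con:Thomason conjecture} from its noncommutative counterpart, which I would phrase (in the framework of Tabuada \cite{Tab18}) as follows: for every saturated dg-category $\cA$ over $\bbF_q$ and every $n\geq 1$, the abelian group $\pi_{-1}L_{KU}\bbK(\cA\otimes_{\bbF_q}\bbF_{q^n})$ is $l$-reducible, where $\bbK$ is nonconnective $K$-theory and $L_{KU}$ is Bousfield localization at $KU$. Since a projective and smooth variety $X$ over $\bbF_q$ gives the saturated dg-category $\Perf_{\dg}X$, and since $X\times_{\bbF_q}\Spec\bbF_{q^n}$ is regular, so that $\bbK(\Perf_{\dg}X\otimes_{\bbF_q}\bbF_{q^n})\simeq\bbK(\Perf_{\dg}(X\times_{\bbF_q}\Spec\bbF_{q^n}))\simeq K(X\times_{\bbF_q}\Spec\bbF_{q^n})$, the noncommutative statement applied to $\cA=\Perf_{\dg}X$ is exactly Thomason's statement. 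Hence it suffices to treat an arbitrary saturated $\cA$.

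First I would record that $\cA\mapsto L_{KU}\bbK(\cA)$ is again a localizing invariant. The functor $\bbK$ is one, and $L_{KU}$ is a smashing localization of $\Sp$ (Bousfield), hence an exact, filtered-colimit-preserving endofunctor; consequently $L_{KU}\bbK$ preserves filtered colimits and carries exact sequences of dg-categories to cofiber sequences. By the universal property of $\cU\colon\dgCat_A\to\Mot_A$ recalled above, for each fixed $n$ the composite $L_{KU}\bbK$ (now over $\bbF_{q^n}$) factors as $\widetilde E\circ\cU$ for a colimit-preserving functor $\widetilde E\colon\Mot_{\bbF_{q^n}}\to\Sp$.

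Next, fix $n$ and set $\cB:=\cA\otimes_{\bbF_q}\bbF_{q^n}$, which is saturated over $\bbF_{q^n}$ since base change preserves saturatedness. By Theorem~\ref{thm:motivic homotopy type of saturated dg-category} there is an integer $m\geq 0$ with $\cU(\cB)$ a direct summand of $\bfone^{\oplus m}$ in $\Ho(\Mot_{\bbF_{q^n}})$. Applying $\widetilde E$, which preserves finite coproducts and retracts, exhibits $L_{KU}\bbK(\cB)$ as a retract of $L_{KU}\bbK(\underline{\bbF_{q^n}})^{\oplus m}=L_{KU}K(\Spec\bbF_{q^n})^{\oplus m}$. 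Because $\bbZ(l^{\infty})$ is torsion, $\Hom(\bbZ(l^{\infty}),-)$ preserves finite direct sums and retracts, so the class of $l$-reducible groups is closed under finite direct sums and direct summands; hence it is enough to verify that $\pi_{-1}L_{KU}K(\Spec\bbF_{q^n})$ is $l$-reducible. This base case is classical: for the $0$-dimensional variety $\Spec\bbF_{q^n}$ the Tate conjecture holds trivially, and Thomason's equivalence \cite{Tho89} translates this into the desired $l$-reducibility; alternatively, Quillen's computation of $K(\bbF_{q^n})$ \cite{Qui72} together with the $KU$-local descent spectral sequence shows that $\pi_{-1}L_{KU}K(\Spec\bbF_{q^n})$ is finitely generated over $\bbZ_l$, so it has no nontrivial $l$-divisible elements.

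The step I expect to be the main obstacle is the second paragraph: checking that passing to the $KU$-localization leaves the localizing-invariant axioms intact, which rests on $L_{KU}$ being smashing so that it commutes with the filtered colimits in play, not merely with finite limits and colimits. Once $L_{KU}\bbK$ is known to descend to $\Mot_{\bbF_{q^n}}$ the reduction to $\Spec\bbF_{q^n}$ is formal, and the only remaining care is to confirm—as above—that $l$-reducibility, defined via the non-finitely generated group $\bbZ(l^{\infty})$, is stable under the finite-direct-sum and retract operations used in the reduction.
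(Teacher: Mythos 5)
Your proposal is correct and follows essentially the same route as the paper: observe that $L_{KU}$ is smashing (Bousfield), so $L_{KU}\bbK$ remains a localizing invariant factoring through $\Mot$, invoke Theorem~\ref{thm:motivic homotopy type of saturated dg-category} to realize $\cU(\cA\otimes_{\bbF_q}\bbF_{q^n})$ as a retract of $\bfone^{\oplus m}$, check that $l$-reducibility survives finite sums and retracts, and settle the base case $\underline{\bbF_{q^n}}$ via Thomason's theorem for a point (or Quillen's computation). The only cosmetic difference is that you verify the commutative-to-noncommutative translation for $\Perf_{\dg}X$ directly rather than citing Tabuada's equivalence, which the paper does.
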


Here is a noncommutative version of Thomason conjecture which is 
proposed by Tabuada in \cite{Tab18}. 
Tabuada showed that for a proper and smooth variety $X$ over a finite field 
$\bbF_q$, Tate conjecture for $X\times_{\bbF_q}\Spec \overline{\bbF}_q$ 
is equivalent to noncommutative 
Tabuada-Thomason-Tate conjecture below for $\Perf_X$ 
(see \cite[1.3]{Tab18}).

\begin{cor}[\bf Tabuada-Thomason-Tate conjecture]
\label{cor:Tabuada conjecture}
Let $\bbF_q$ be a finite field and $l$ be a prime number 
which is prime to $q$ and let $\cA$ be a saturated dg-category over 
$\bbF_q$. 
Then $\pi_{-1}L_{KU}K(\cA\otimes_{\bbF_q}\bbF_{q^n})$ 
is $l$-reducible for all $n$ 
where $L_{KU}K(\cA\otimes_{\bbF_q}\bbF_{q^n})$ 
stands for the Bousfield localization of 
the algebraic $K$-spectrum if $\cA\otimes_{\bbF_q}\bbF_{q^n}$ with 
respect to topological complex $K$-theory $KU$. 
\end{cor}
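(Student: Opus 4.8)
The plan is to follow the same template as the corollaries above: first reduce, by means of Theorem~\ref{thm:motivic homotopy type of saturated dg-category}, to the case $\cA=\underline{\bbF_q}$, and then invoke the known computation of the algebraic $K$-theory of finite fields.

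I would begin by assembling three formal inputs. \emph{(a)} For each $n$ set $k=\bbF_{q^n}$; base change along $\bbF_q\to k$ sends saturated dg-categories over $\bbF_q$ to saturated dg-categories over $k$ and carries $\underline{\bbF_q}$ to $\underline{k}$, so in particular $\cA\otimes_{\bbF_q}k$ is saturated over $k$. \emph{(b)} Non-connective $K$-theory $\bbK$ is a localizing invariant, hence factors as $\bbK\simeq\cF\circ\cU$ for a colimit-preserving $\cF\colon\Mot_k\to\Sp$; the $KU$-local Bousfield localization $L_{KU}$ is smashing, hence colimit-preserving, so $L_{KU}K\simeq(L_{KU}\circ\cF)\circ\cU$ is again a localizing invariant, and in particular it carries finite direct sums and retracts in $\Ho(\Mot_k)$ to finite direct sums and retracts in $\Ho(\Sp)$. \emph{(c)} The class of $l$-reducible abelian groups is closed under finite direct sums and under passage to direct summands, because $\Hom(\bbZ(l^{\infty}),-)$ is additive.

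Next I would fix $n$ and apply Theorem~\ref{thm:motivic homotopy type of saturated dg-category} over $k=\bbF_{q^n}$ to $\cA\otimes_{\bbF_q}k$, obtaining $m\geq 0$ with $\cU(\cA\otimes_{\bbF_q}k)$ a direct summand of $\bfone^{\oplus m}=\cU(\underline{k})^{\oplus m}$ in $\Ho(\Mot_k)$. By \emph{(b)} the spectrum $L_{KU}K(\cA\otimes_{\bbF_q}k)$ is then a direct summand of $L_{KU}K(\underline{k})^{\oplus m}=L_{KU}K(k)^{\oplus m}$, so $\pi_{-1}L_{KU}K(\cA\otimes_{\bbF_q}k)$ is a direct summand of $\pi_{-1}L_{KU}K(k)^{\oplus m}$. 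By \emph{(c)} it then suffices to show that $\pi_{-1}L_{KU}K(\bbF_{q^n})$ is $l$-reducible for every $n$, i.e.\ Conjecture~\ref{con:Thomason conjecture} for $X=\Spec\bbF_q$. This last point I would settle from Quillen's computation \cite{Qui72} ($K_{2i-1}(\bbF_{q^n})=\bbZ/(q^{ni}-1)$, $K_{2i}(\bbF_{q^n})=0$ for $i>0$, and vanishing of negative $K$-groups) together with the {\'e}tale-descent description of $L_{KU}K(\bbF_{q^n})$ as the fibre of $\psi^{q^n}-1$, from which $\pi_{-1}L_{KU}K(\bbF_{q^n})$ is seen to have no nonzero $l$-divisible element.

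The step I expect to be the main obstacle is input \emph{(b)}: one must verify carefully that the $KU$-localization of non-connective $K$-theory still descends through $\cU$ to $\Mot_k$, so that Theorem~\ref{thm:motivic homotopy type of saturated dg-category} is applicable to it. By contrast the base-field statement is classical---it is already implicit in \cite{Tho89}, the Tate conjecture being trivially true for $\Spec\bbF_q$---and the remaining manipulations are purely formal.
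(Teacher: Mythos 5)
Your proposal follows essentially the same route as the paper: base change to $\bbF_{q^n}$ to reduce to $n=1$, use the fact that $L_{KU}$ is smashing (Bousfield) so that $L_{KU}K$ descends to $\Ho(\Mot_k)$ and respects direct sums and retracts, reduce to $\underline{\bbF_q}$ via Theorem~\ref{thm:motivic homotopy type of saturated dg-category}, and settle the base case from the known $K$-theory of finite fields. The only cosmetic difference is that the paper cites Thomason's theorem together with the (trivial) Tate conjecture for a point, whereas you offer to verify the base case directly from Quillen's computation and the fibre of $\psi^{q^n}-1$; both are fine.
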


\begin{proof}[Proof of Corollary~\ref{cor:Tabuada conjecture}]
Since for each positive integer $n$ and a saturated dg-category $\cA$ over 
$\bbF_q$, 
a dg-category $\cA\otimes_{\bbF_q}\bbF_{q^n}$ is a saturated dg-category 
over $\bbF_{q^n}$, 
by replacing $k=\bbF_q$ with $\bbF_{q^n}$ and $\cA$ with 
$\cA\otimes_{\bbF_q}\bbF_{q^n}$, we shall 
only give a proof for $n=1$. 
Since for any spectra $X$, 
we have an equivalence of spectra (\cite[4.3]{Bou79}) 
$L_{KU}X\simeq X\wedge L_{KU}(\Sigma^{\infty}S^0)$, 
$L_{KU}$ preserves distinguished triangles and direct sums. 
Thus by 
Theorem~\ref{thm:motivic homotopy type of saturated dg-category}, 
we reduced to the case for $\cB=\underline{\bbF_q}$. 
Since Tate conjecture for $\Spec \overline{\bbF}_q$ is true, 
$\pi_{-1}L_{KU}K(\bbF_q)$ is $l$-reducible by 
Thomason's theorem \cite{Tho89}. 
Hence we obtain the result.
\end{proof}

\subsection{Noncommutative $p$-adic Tate conjecture}
\label{subsec:padic Tate conjecture}

Tabuada proposed 
a noncommutative $p$-adic Tate conjecture 
and proved that it implies the original $p$-adic version of 
Tate conjecture in \cite[1.3]{Tab18}. 
Recall the notations from introduction. 
Let $k$ be a finite field $\bbF_q$ of characteristic $p$ with $q=p^m$, 
and 
let $W(k)$ be the associated ring of $p$-typical Witt vectors and we 
set $K:=W(k)[1/p]$ the fraction field of $W(k)$. 
Let $\cA$ be a saturated dg-category over $k$. 
Then Tabuada constructed the $p$-adic noncommutative cycle map 
\begin{equation}
\label{eq:padic noncommutative cycle map}
\rho_{\cA}\colon K_0(\cA)\otimes K\to TP_0(\cA)^{{\varphi}^m}_{1/p}
\end{equation}
where $TP_0(\cA)_{1/p}^{\varphi^m}$ stands for 
$K$-linear subspace of ${TP_0(\cA)}_{1/p}=TP_0(\cA)\otimes_{W(k)}K$ 
the $0$-th topological periodic cyclic homology of $\cA$ consisting of 
those elements which are fixed by the $K$-linear endomorphism 
$\varphi^m$ the $m$-times cyclotomic Frobenius 
(for more detail, see below). 
Here is a noncommutative $p$-adic Tate conjecture and 
we will show it by utilizing motivic Bass conjecture. 
In particular we obtain the $p$-adic Tate conjecture. 

\begin{cor}[\bf Noncommutative $p$-adic Tate conjecture]
\label{cor:noncommutative padic Tate conjecture}
For a saturated dg-category $\cA$ over $k$, the cycle map 
$\mathrm{(\ref{eq:padic noncommutative cycle map})}$ is surjective. 
\end{cor}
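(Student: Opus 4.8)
The plan is to run the same reduction as for the preceding corollaries of this section: use Theorem~\ref{thm:motivic homotopy type of saturated dg-category} to descend from an arbitrary saturated dg-category over $k=\bbF_q$ to $\underline{\bbF_q}$, where the statement becomes a direct computation. First I would record that the property ``$\rho_\cA$ is surjective'' is stable under finite direct sums and direct summands of $\cA$. All three functors in sight are additive: algebraic $K$-theory sends a finite product of dg-categories to the product of spectra and $-\otimes K$ is exact, so $\cA\mapsto{K_0(\cA)}\otimes K$ is additive; $TP$ is a localizing invariant and the cyclotomic Frobenius $\varphi$ is a natural endomorphism of it, so $\cA\mapsto TP_0(\cA)^{\varphi^m}_{1/p}$ --- the kernel of $\varphi^m-\id$ acting on $\pi_0(-)\otimes_{W(k)}K$, hence a finite limit of additive functors --- is again additive; and Tabuada's cycle map $\rho$ of $\mathrm{(\ref{eq:padic noncommutative cycle map})}$ is natural in $\cA$. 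A retraction $\cU(\cA)\triangleleft\cU(\underline{\bbF_q})^{\oplus n}=\bfone^{\oplus n}$ in $\Ho(\Mot_k)$ then exhibits the data computing $\rho_\cA$ as a retract of the $n$-fold direct sum of the data computing $\rho_{\underline{\bbF_q}}$; since a direct sum of surjections is surjective and surjectivity passes to retracts, surjectivity of $\rho_{\underline{\bbF_q}}$ yields surjectivity of $\rho_\cA$. By Theorem~\ref{thm:motivic homotopy type of saturated dg-category} such a retraction exists for every saturated $\cA$, so it remains to treat $\cA=\underline{\bbF_q}$.

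For $\cA=\underline{\bbF_q}$ the statement is a computation. Since $K_0(\underline{\bbF_q})=K_0(\bbF_q)=\bbZ$, the source of $\rho_{\underline{\bbF_q}}$ is the one-dimensional $K$-vector space $K$. On the target side, Hesselholt's computation gives $TP_*(\bbF_q)\cong W(\bbF_q)[\sigma^{\pm1}]$ with $|\sigma|=2$, so $TP_0(\bbF_q)=W(\bbF_q)$ and $TP_0(\bbF_q)_{1/p}=K$; the cyclotomic Frobenius acts on $TP_0(\bbF_q)=W(\bbF_q)$ as the Witt-vector Frobenius, which has order $m$ since $q=p^m$, hence $\varphi^m=\id$ and $TP_0(\bbF_q)^{\varphi^m}_{1/p}=K$. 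Finally, the cyclotomic trace $K(\bbF_q)\to TP(\bbF_q)$ is a map of $\bbE_\infty$-rings and so sends $1\in K_0(\bbF_q)$ to $1\in TP_0(\bbF_q)$, whence $\rho_{\underline{\bbF_q}}$ is the identity of $K$ after inverting $p$ --- in particular surjective. (Equivalently, by \cite[1.3]{Tab18} this case reduces to the classical $p$-adic Tate conjecture for $\Spec\bbF_q$, which is immediate since $H^0_{\crys}(\Spec\bbF_q)(0)^{\Fr_p}=\bbQ_p$ is generated by the fundamental class.) Via \cite[1.3]{Tab18} once more, this also yields the original $p$-adic Tate conjecture for proper smooth varieties over finite fields.

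The step I expect to be the main obstacle is transporting the retract datum of Theorem~\ref{thm:motivic homotopy type of saturated dg-category} through $TP$. In contrast with $\bbK$, $\HK$ and $L_{KU}K$, which were used in the earlier corollaries, $TP$ is \emph{not} finitary --- the $S^1$-Tate construction does not commute with filtered colimits --- so the universal property of $\Mot_k$ recalled above does not directly furnish a functor realizing $TP$, and a priori a morphism $\cU(\cA)\to\bfone^{\oplus n}$ in $\Ho(\Mot_k)$ need not be induced by a dg-functor. What saves the argument is saturatedness of $\cA$: then $\cU(\cA)$ is dualizable and every morphism of $\Mot_k$ out of $\cU(\cA)$ is represented by a bimodule (Fourier--Mukai) kernel, and such kernels act on every localizing invariant --- $TP$ included, $S^1$-equivariantly on $\mathrm{THH}$ and hence compatibly with $\varphi$ --- so the retraction and the operators $\varphi^m$ do transport. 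Making this action explicit, together with pinning down the naturality of $\rho$ (built into Tabuada's construction from the cyclotomic trace, but worth stating), is the only non-formal point; the additivity bookkeeping and the base-case computation are routine.
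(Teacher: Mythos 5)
Your proposal matches the paper's proof: the paper likewise reduces to $\cA=\underline{k}$ via Theorem~\ref{thm:motivic homotopy type of saturated dg-category} (the question being closed under direct summands) and then concludes from the finite-field computation, i.e.\ Lemma~\ref{lem:Tabuada Frobenius}~(iv), which is exactly your base case. The ``non-finitary $TP$'' issue you flag is handled in the paper's preliminary discussion by regarding $THH$ and $TP$ as functors on $\Mot^{\loc}_k$ via \cite{BM12}, \cite{CT12} and \cite{BM17}, so your extra justification is consistent with, and essentially a restatement of, what the paper already sets up.
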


Before giving our proof of 
Corollary~\ref{cor:noncommutative padic Tate conjecture}, 
we briefly recall some notations and fundamental results of 
topological periodic cyclic homology from 
\cite{BM17}, \cite{HM97}, \cite{Tab18} and so on. 
We write $THH$ for the topological Hochschild homology theory. 
Since $THH$ is a localizing invariant on the category of 
small dg-categories (over $\bbZ$) by \cite[1.2, 7.1]{BM12}, 
we can regard it as a functor on $\Mot^{\loc}_k$ to $\Ho(\Spt)$ 
the category of spectra by \cite[7.2]{CT12}. 
Topological periodical cyclic homology $TP$ is defined as the 
Tate cohomology of the circle group action on 
topological Hochschild homology. Namely 
$TP(\cA)={THH(\cA)}^{tS^1}$ for any dg-category $\cA$ over $k$. 
We can also regard $TP$ as the symmetric monoidal functor 
from $\Mot_k^{\loc}$ to $\Ho(\Spt)$ by 
\cite[7.5]{CT12} and \cite[Theorem A]{BM17} 
(see also \cite{AMN18}). 
By the work of Hesselholt and Madsen \cite[\S 5]{HM97}, 
we obtain the following computation results. 
In particular for a saturated dg-category over $k$, 
${TP_0(\cA)}_{1/p}:=TP_0(\cA)\otimes_{W(k)}K$ has 
a structure of $K$-vector space. 
\begin{lem}
\label{lem:HM97}
As a graded $W(k)$-algebra, we have an isomorphism 
\begin{equation}
\label{eq:HM97}
TP_{\ast}(k)\isoto S_{W(k)}\{v^{\pm 1}\}
\end{equation}
for some $v\in TP_{-2}(k)$. 
Here $S_{W(k)}$ stands for the symmetric algebra over $W(k)$. 
\qed
\end{lem}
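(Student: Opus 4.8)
The plan is to reduce the computation of $TP_\ast(k)$ for the finite field $k=\bbF_q$ to the known computation of $THH_\ast(\bbF_p)$ and then run the Tate spectral sequence of the circle action. First I would recall Bökstedt's computation $THH_\ast(\bbF_p)\cong\bbF_p[\mu]$ with $\mu$ in degree $2$, together with the étale base-change statement that gives $THH_\ast(\bbF_q)\cong W(k)/p\,[\mu]$ (equivalently $THH_\ast(k)\cong THH_\ast(\bbF_p)\otimes_{\bbF_p}k$, using that $\bbF_q/\bbF_p$ is finite étale so $THH(\bbF_q)\simeq THH(\bbF_p)\wedge_{H\bbF_p}H\bbF_q$). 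From here the approach is standard Tate-spectral-sequence bookkeeping: the homotopy fixed point and Tate spectral sequences for the $S^1$-action on $THH(k)$ have $E_2$-page a polynomial-over-Laurent algebra on a degree-$(-2)$ class $t$ (the Euler class) tensored with $THH_\ast(k)$, and the only possible differential is $d_2$ determined by the action of the Bökstedt periodicity element; tracking it yields the collapse at a finite page and the stated answer.

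The key steps, in order, are: (i) establish the étale descent reduction $THH_\ast(k)\cong W(k)/p[\mu]$; (ii) write down the Tate spectral sequence $\widehat{E}_2^{\ast,\ast}=\widehat H^{-\ast}(S^1;\pi_\ast THH(k))\Rightarrow TP_\ast(k)$, identifying the $E_2$-term as $(W(k)/p)[\mu][t^{\pm 1}]$ with $|t|=(-2,0)$ and $|\mu|=(0,2)$; (iii) compute the first differential — here one uses that the de Rham--Witt / crystalline structure forces $d_2(\mu)$ to be a unit multiple of $t\cdot v$ for a class $v$ detecting the Witt lift, so that after $d_2$ the spectral sequence collapses with $E_3=E_\infty$ concentrated appropriately; (iv) solve the (trivial, by degree reasons) multiplicative extension problem to conclude $TP_\ast(k)\cong S_{W(k)}\{v^{\pm1}\}$ with $v\in TP_{-2}(k)$, where the symmetric algebra notation records that $TP_\ast(k)$ is a Laurent polynomial ring over $W(k)$ on the single generator $v$; and (v) quote \cite[\S5]{HM97} for the precise identification of $v$ and of the resulting Frobenius action, which is all that the subsequent $p$-adic cycle-map argument needs. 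For the last sentence of the lemma it then suffices to observe that a saturated dg-category $\cA$ over $k$ has $TP_\ast(\cA)$ a $TP_\ast(k)$-module via the unit map, and since $TP_{-2}(k)$ contains the invertible element $v$, inverting $p$ and using $W(k)[1/p]=K$ equips $TP_0(\cA)_{1/p}$ with a $K$-vector space structure.

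The main obstacle I expect is step (iii): pinning down the first nonzero differential in the Tate spectral sequence and checking that it is given by multiplication by (a unit times) $t$ against the correct lift of Bökstedt periodicity to $W(k)$-coefficients, i.e.\ verifying that the extension from $\bbF_p$-coefficients to $W(k)$-coefficients is exactly the ``multiplication by $p$'' deformation that produces the divided-power–free Laurent algebra rather than something with higher torsion. This is precisely the content of Hesselholt--Madsen's analysis, so in practice I would not reprove it but cite \cite[\S5]{HM97} (and \cite{BM17} for the $\infty$-categorical repackaging of $TP$ as $THH^{tS^1}$ compatible with the localizing-invariant formalism of \cite{CT12} already set up in the excerpt), and limit the argument here to assembling these inputs and recording the module structure used downstream.
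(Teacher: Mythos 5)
The paper offers no argument for this lemma at all: it is stated with a reference to \cite[\S 5]{HM97} and closed immediately, so any honest proof must in the end rest on that citation, as yours does. Your overall route (B\"okstedt's computation $THH_{\ast}(\bbF_p)\cong\bbF_p[\mu]$, \'etale base change to get $THH_{\ast}(\bbF_q)\cong\bbF_q[\mu]$, then the Tate spectral sequence for the $S^1$-action) is the standard and correct one. However, your description of how that spectral sequence actually behaves is inverted, and as written steps (iii) and (iv) would fail. The $E_2$-page $k[\mu][t^{\pm 1}]$ with $|t|=(-2,0)$ and $|\mu|=(0,2)$ is concentrated entirely in even total degree, so \emph{every} differential vanishes for parity reasons; there is no $d_2(\mu)$ to compute, and in particular no differential ``forced by the de Rham--Witt structure.'' The spectral sequence degenerates at $E_2$, and the entire content of the lemma sits in the multiplicative extension problem that you dismiss as ``trivial by degree reasons'': the associated graded of $TP_0(\bbF_q)$ is $\bigoplus_{i\geq 0}k\cdot t^i\mu^i$, and one must show that the extension is $t\mu\doteq p$ (rather than, say, $t\mu=0$) so that these classes assemble into $W(k)$ and not into $k[x]$ or $k[[x]]$. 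That identification is precisely what Hesselholt--Madsen (or, in modern form, Nikolaus--Scholze) prove, and it is the one step you cannot wave away by degree considerations.

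So: the skeleton of your argument is the right one and the final appeal to \cite[\S 5]{HM97} is legitimate (indeed it is all the paper itself does), but you have located the difficulty in the wrong place. If you rewrite step (iii) as ``degeneration at $E_2$ by parity'' and step (iv) as ``resolve the nontrivial multiplicative extension $t\mu\doteq p$, citing \cite{HM97},'' the proposal becomes correct. Your closing remark about the $K$-vector space structure on $TP_0(\cA)_{1/p}$ via the unit map $TP_{\ast}(k)\to TP_{\ast}(\cA)$ and inversion of $p$ is fine and matches the sentence preceding the lemma in the paper.
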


We obtain the cycle class map 
$\rho\colon {K_0(-)}_K(=K_0(-)\otimes K) \to {TP_0(-)}_{1/p}$ by universality of $K$-theory \cite[8.7]{CT12} 
(see also around Lemma 3.7 in \cite{Tab18}). 
We write $\rho_X$ for $\rho_{\Perf_X}$ for a scheme $X$ over $k$ and 
we denote $\rho_{\Spec A}$ by $\rho_A$ for a $k$-algebra $A$. 

In \cite{Tab18}, by utilizing \cite[II 4.2]{NS18}, Tabuada 
constructed the {\it cyclotomic Frobenius morphism} 
$$\varphi\colon{TP_0(\cA)}_{1/p}\to {TP_0(\cA)}_{1/p}$$
for a saturated dg-category $\cA$ over $k$ 
which satisfies the following conditions 
(see around Lemma 3.7 in \cite{Tab18}).

\begin{lem}
\label{lem:Tabuada Frobenius}
Let $\cA$ be a saturated dg-category over $k$. Then
\begin{enumerate}
\enumidef
\item
$\varphi^m$ is a $K$-linear map where $m=\log_p\# k$ 
where $\# k$ stands for the number of elements in $k$. 

\item
The image of the cycle map 
$\rho_{\cA}\colon {K_0(\cA)}_K\to {TP_0(\cA)}_{1/p}$ is in 
${TP_0(\cA)}^{{\varphi}^m}_{1/p}$ the $K$-linear subspace 
of ${TP_0(\cA)}_{1/p}$ of those elements 
which are fixed by the $K$-linear endomorphism ${\varphi}^m$. 

\item
$\varphi^{m}\colon {TP_0(k)}_{1/p}\to {TP_0(k)}_{1/p}$ 
is the identity morphism. 

\item
$\rho_{k}\colon K_0(k)_K\to {TP_0(k)}_{1/p}^{\varphi^m}$ 
is just the identity map of $K$ via 
the canonical isomorphisms ${K_0(k)}_K\simeq K$ and 
${TP_0(k)}_{1/p}^{\varphi^m}\simeq K$.
\end{enumerate}
\qed
\end{lem}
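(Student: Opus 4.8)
The plan is to reconstruct Tabuada's construction of $\varphi$ and then verify the four assertions, the only substantive one being (ii). First we recall the construction of $\varphi$. For a saturated dg-category $\cA$ over $k$ the spectrum $THH(\cA)$ is a bounded below cyclotomic spectrum (bounded belowness uses saturation: $THH(\cA)$ is a perfect $THH(k)$-module and $THH(k)$ is connective). The $p$-cyclotomic Frobenius $\varphi_p\colon THH(\cA)\to THH(\cA)^{tC_p}$, together with the Nikolaus--Scholze identification $\bigl(THH(\cA)^{tC_p}\bigr)^{h(S^1/C_p)}\simeq THH(\cA)^{tS^1}$ of \cite[II.4.2]{NS18}, yields a Frobenius $\varphi'\colon TC^-(\cA)=THH(\cA)^{hS^1}\to TP(\cA)$, to be compared with the canonical map $\mathrm{can}\colon TC^-(\cA)\to TP(\cA)$. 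Now $THH(\cA)$ is a module over $THH(k)$, and since $p=0$ in $\pi_0 THH(k)=k$ the element $p$ acts by zero on $THH(\cA)$; hence $THH(\cA)$, and with it the fiber of $\mathrm{can}$ (a shift of $THH(\cA)_{hS^1}$), is $p$-torsion, so $\mathrm{can}$ becomes an equivalence after inverting $p$. We then set $\varphi:=\varphi'\circ(\mathrm{can}_{1/p})^{-1}$, an endomorphism of ${TP_0(\cA)}_{1/p}$.

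Items (i), (iii) and (iv) are then formal. Lax symmetric monoidality of $THH$ and of the homotopy-fixed-point and Tate constructions makes $\varphi$ semilinear over the Frobenius $\varphi_k$ of $TP_0(k)$ with respect to the $TP_0(k)$-module structure on $TP_0(\cA)$, i.e. $\varphi(ax)=\varphi_k(a)\varphi(x)$ for $a\in TP_0(k)=W(k)$. By Lemma~\ref{lem:HM97} we have ${TP_0(k)}_{1/p}\simeq W(k)[1/p]=K$, and by the Hesselholt--Madsen computation — equivalently, by the identification of $TP_0$ of a perfect field with its Witt vectors together with their Frobenius — the map $\varphi_k$ is the Witt-vector Frobenius $\sigma$ of $W(k)$. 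As $k=\bbF_{p^m}$ with $m=\log_p\#k$, the automorphism $\sigma$ of $W(k)$ (equivalently of $K$) has order $m$, so $\varphi^m$ is $W(k)$-linear, hence $K$-linear, on ${TP_0(\cA)}_{1/p}$; this is (i). Taking $\cA=\underline{k}$, the endomorphism $\varphi$ of ${TP_0(k)}_{1/p}=K$ equals $\sigma$, so $\varphi^m=\sigma^m=\id$, giving (iii), and hence ${TP_0(k)}^{\varphi^m}_{1/p}=K$. Finally, $\rho_k$ is $\pi_0$ of a map of ring spectra (the cyclotomic trace for the commutative ring $k$), so it sends $1$ to $1$; under the identifications ${K_0(k)}_K\simeq K$ and ${TP_0(k)}^{\varphi^m}_{1/p}\simeq K$ it is therefore the identity of $K$, which is (iv).

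For (ii): by construction $\rho_\cA$ is, on $\pi_0$ and before inverting $p$, the cyclotomic trace $K_0(\cA)\to TC_0(\cA)$ followed by $TC_0(\cA)\to TC^-_0(\cA)\xrightarrow{\mathrm{can}}TP_0(\cA)$. Since $TC(\cA)=\operatorname{fib}\bigl(\varphi'-\mathrm{can}\colon TC^-(\cA)\to TP(\cA)\bigr)$, exactness gives that the image of $TC_0(\cA)\to TC^-_0(\cA)$ lands in $\{\,y:\mathrm{can}(y)=\varphi'(y)\text{ in }TP_0(\cA)\,\}$; so for $x=\mathrm{can}(\bar y)$ with $\bar y$ such an element we get, in ${TP_0(\cA)}_{1/p}$, $\varphi(x)=\varphi'\bigl((\mathrm{can}_{1/p})^{-1}(x)\bigr)=\varphi'(\bar y)=\mathrm{can}(\bar y)=x$. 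Thus the image of $K_0(\cA)$ in ${TP_0(\cA)}_{1/p}$ lies in the $\varphi$-fixed locus, a fortiori in the $\varphi^m$-fixed locus; and by (i) the subset ${TP_0(\cA)}^{\varphi^m}_{1/p}$ is a $K$-linear subspace, so it contains the $K$-span of that image, which is precisely the image of $\rho_\cA\colon {K_0(\cA)}_K\to {TP_0(\cA)}_{1/p}$. This proves (ii).

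The hard part will be the construction of $\varphi$, not the bookkeeping of (i)--(iv): producing $\varphi'\colon TC^-\to TP$ from the cyclotomic structure on $THH$ and the Tate-orbit identification, and checking that inverting $p$ makes $\mathrm{can}$ an equivalence so that $\varphi'$ descends to an honest endomorphism of ${TP}_{1/p}$ — together with the one genuinely computational input, that $\varphi$ restricts on $TP_0(k)$ to the Witt-vector Frobenius of $W(k)$. All of this is due to Tabuada \cite{Tab18} (building on \cite{NS18}) and Hesselholt--Madsen \cite{HM97}, and is what \cite[Lemma~3.7]{Tab18} and the discussion around it provide; granting it, (i)--(iv) follow as above, (ii) being the only step that uses the factorization of the cyclotomic trace through $TC$.
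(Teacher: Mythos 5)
Your proposal is correct, and it is essentially the paper's approach: the paper states this lemma without proof, simply importing it from the sources it cites (the construction of $\varphi$ via the Nikolaus--Scholze Frobenius and the inverted-$p$ equivalence of the canonical map from \cite{Tab18} and \cite{NS18}, and the computation of $TP_{\ast}(k)$ with its Frobenius from \cite{HM97}), and your argument is exactly a reconstruction of that material, with (ii) correctly reduced to the factorization of the trace through $TC$ and the observation that only the $\varphi^m$-fixed (not $\varphi$-fixed) locus is a $K$-subspace.
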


\begin{proof}[Proof of Corollary~\ref{cor:noncommutative padic Tate conjecture}]The question is closed under direct summand, thus by 
Theorem~\ref{thm:motivic homotopy type of saturated dg-category}, 
we reduce to the problem for $\cB=\underline{k}$. 
For this case, assertion is true by 
Lemma~\ref{lem:Tabuada Frobenius} $\mathrm{(iv)}$. 
\end{proof}

\subsection{Noncommutative standard conjectures}
\label{subsec:standard conjecture}

In this subsection, 
We consider the 
noncommutative Beilinson and Voevodsky nilpotence 
conjectures which were proposed by Tabuada  
respectively Bernardara, Marcolli and Tabuada 
in \cite{Tab18} and \cite{BMT18} respectively. 
Let $\cA$ be a saturated dg-category over a field $k$. 
Then there exists a pairing 
$\chi\colon K_0(\cA)\times K_0(\cA)\to\bbZ$ 
which sends a pair $([M],[N])$ to an integer 
$\displaystyle{\sum_i{(-1)}^i\dim_k\Hom_{\calD_c(\cA)}(M,n[-i])}$ where 
$\calD_c(\cA)$ stands for the full subcategory of all compact objects in 
$\calD(\cA)$ the derived category of $\cA$. 
We say that an element $x$ in $K_0(\cA)$ is 
{\it numerically equivalent to zero} if for any element $y$ in $K_0(\cA)$, 
$\chi(x,y)=0$. 
We denote the quotient group of ${K_0(\cA)}_{\bbQ}$ by subgroup of 
all elements which are numerically equivalent to zero by ${K_0(\cA)}_{\num}$. 
Notice that we have the canonical isomorphism $K_0(\cA)\simeq\Hom_{\NMM_k}(\bfone,\cU(\cA))$ and therefore we can regard an element of $K_0(\cA)$ as 
a morphism $\bfone \to\cU(\cA)$. 
An element $x$ in $K_0(\cA)$ is {\it $\otimes$-nilpotent} if there exists an 
integer $n>0$ such that $x^{\otimes n}=0$ where $\otimes$ is the symmetric monoidal structure on $\NMM_k$. We write ${K_0(\cA)}_{\nil}$ 
for the quotient group of $K_0(\cA)_{\bbQ}$ 
by the subgroup of all $\otimes$-nilpotent elements. 
Then we will show the followings.

\begin{cor}[\bf Noncommutative Beilinson conjecture]
\label{cor:noncommutative Beilinson conjecture}
If $k$ is a finite field, then the equality 
${K_0(\cA)}_{\num}={K_0(\cA)}_{\bbQ}$ holds. 
\end{cor}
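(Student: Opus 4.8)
The plan is to run the same reduction used in Corollaries~\ref{cor:noncommutative Parshin conjecture}, \ref{cor:Tabuada conjecture} and \ref{cor:noncommutative padic Tate conjecture}. First I would reformulate the statement. Extending the Euler pairing $\chi$ $\bbQ$-bilinearly to ${K_0(\cA)}_{\bbQ}$, the equality ${K_0(\cA)}_{\num}={K_0(\cA)}_{\bbQ}$ says exactly that the left radical of $\chi$ on ${K_0(\cA)}_{\bbQ}$ vanishes, i.e. that $\chi$ is non-degenerate. Via the canonical isomorphism $K_0(\cA)\simeq\Hom_{\NMM_k}(\bfone,\cU(\cA))$ and the standard identification of this radical with the numerical $\otimes$-ideal $\cN(\bfone,\cU(\cA))$, this amounts to saying that the localization $\NMM_{k,\bbQ}\to\NMM_{k,\bbQ}/\cN$ induces an isomorphism $\Hom_{\NMM_{k,\bbQ}}(\bfone,\cU(\cA))\to\Hom_{\NMM_{k,\bbQ}/\cN}(\bfone,\cU(\cA))$.

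Next I would check that this property is stable under direct sums and direct summands of $\cU(\cA)$. For a product of dg-categories one has $\calD_c(\cA\times\cB)\simeq\calD_c(\cA)\times\calD_c(\cB)$, so $K_0$ splits and $\chi_{\cA\times\cB}$ is the orthogonal sum of $\chi_{\cA}$ and $\chi_{\cB}$; more conceptually, both $\Hom_{\NMM_{k,\bbQ}}(\bfone,-)$ and $\Hom_{\NMM_{k,\bbQ}/\cN}(\bfone,-)$ are additive functors and the comparison between them is a natural transformation. Since $\Hom_{\NMM_{k,\bbQ}}(\bfone,\bfone)=\bbQ$ and $\chi(1,1)=1\neq 0$, the ideal $\cN$ vanishes at $(\bfone,\bfone)$, so the comparison is an isomorphism on $\bfone^{\oplus n}$ for every $n$; an additive functor carries a direct-sum decomposition to a compatible one, so an isomorphism on $\bfone^{\oplus n}$ restricts to an isomorphism on each direct summand.

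Then I would invoke Theorem~\ref{thm:motivic homotopy type of saturated dg-category}: $\cU(\cA)$ is a direct summand of some $\bfone^{\oplus n}$, so it suffices to treat $\cA=\underline{k}$. There $K_0(\underline{k})\simeq\bbZ$ is generated by $[k]$, the higher $\Hom$-groups between $k$ and itself in $\calD_c(\underline{k})$ vanish, and
$$\chi([k],[k])=\dim_k\Hom_{\calD_c(\underline{k})}(k,k)=\dim_k k=1\neq 0,$$
so $\chi$ becomes the non-degenerate rank-one form on $\bbQ$ and ${K_0(\underline{k})}_{\num}={K_0(\underline{k})}_{\bbQ}=\bbQ$, which is the desired conclusion. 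The finite-field hypothesis is used only because that is the setting in which the conjecture is posed; the reduced computation is valid over any field, the genuine input being Theorem~\ref{thm:motivic homotopy type of saturated dg-category}.

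The step I expect to cost the most work is the middle one: making precise that non-degeneracy of the Euler form, equivalently the vanishing of $\cN$ against $\bfone$, really is stable under passage to a direct summand of the associated noncommutative motive, and in particular checking that the natural surjection ${K_0(\cA)}_{\bbQ}\to{K_0(\cA)}_{\num}$ is the map induced by the localization $\NMM_{k,\bbQ}\to\NMM_{k,\bbQ}/\cN$, so that functoriality applies. Once this bookkeeping is arranged, the rest is the same terse reduction as elsewhere in this section.
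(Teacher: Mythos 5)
Your proposal is correct and follows essentially the same route as the paper: reformulate the statement as the vanishing of $\Num(\bfone,\cU(\cA))_{\bbQ}$ via the identification $K_0(\cA)_{\#}\simeq\Hom_{\NMM_{k,\#}}(\bfone,\cU(\cA))$, observe that this property passes to direct summands (the paper cites the compatibility of ideals with finite direct sums, Lemma~\ref{lem:quotient category}~(i), which is the same additivity fact you invoke), reduce to $\cA=\underline{k}$ by Theorem~\ref{thm:motivic homotopy type of saturated dg-category}, and conclude from $K_0(k)=\bbZ$ with non-degenerate Euler form. Your explicit computation of $\chi([k],[k])=1$ and the remark that the finite-field hypothesis is not needed in the reduced step are consistent with, and slightly more detailed than, the paper's terse argument.
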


\begin{cor}[\bf Noncommutative Voevodsky nilpotence conjecture]
\label{cor:noncommutative Voevodsky conjecture}
The equality ${K_0(\cA)}_{\nil}={K_0(\cA)}_{\num}$ 
holds. 
\end{cor}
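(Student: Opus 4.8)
The plan is to follow the same template used throughout this section: exploit the fact that the quantities in play are compatible with direct sums and direct summands, reduce via Theorem~\ref{thm:motivic homotopy type of saturated dg-category} to the base case $\cA=\underline{k}$, and invoke the known classical statement there. The subtle point, compared to the earlier corollaries, is that the two sides of the claimed equality involve quotient groups of ${K_0(\cA)}_{\bbQ}$, so one must verify that passing to a direct summand is compatible with both the $\otimes$-nilpotence ideal and the numerical ideal. Concretely, if $\cU(\cA)$ is a direct summand of $\cU(\cB)$ in $\NMM_k$, then $K_0(\cA)=\Hom_{\NMM_k}(\bfone,\cU(\cA))$ is a direct summand of $K_0(\cB)=\Hom_{\NMM_k}(\bfone,\cU(\cB))$ as abelian groups, and the splitting is induced by idempotents in $\Hom_{\NMM_k}(\cU(\cB),\cU(\cB))$; since $\otimes$-nilpotence and the pairing $\chi$ are both defined purely in terms of the symmetric monoidal structure and Hom-pairings on $\NMM_k$, they restrict compatibly along such a split inclusion. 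Hence both ${K_0(-)}_{\nil}$ and ${K_0(-)}_{\num}$ are functorial for split inclusions of noncommutative motives, and the equality $ {K_0(-)}_{\nil} = {K_0(-)}_{\num}$ for $\cB$ implies it for any direct summand $\cA$.

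First I would record the reduction precisely: by Theorem~\ref{thm:motivic homotopy type of saturated dg-category} there is $n\geq 0$ with $\cU(\cA)$ a direct summand of $\bfone^{\oplus n}$ in $\Ho(\Mot_A)$, and since $\NMM_k$ is a thick (idempotent complete) subcategory this summand decomposition lives in $\NMM_k$. By the compatibility just described, it suffices to prove the equality for $\cA=\underline{k}$. In that case $K_0(\underline{k})\simeq\bbZ$, generated by the unit, so ${K_0(\underline{k})}_{\bbQ}\simeq\bbQ$; the class of the unit is certainly not $\otimes$-nilpotent (its $n$-th tensor power is again the unit class, which is nonzero), and the pairing $\chi$ on the unit is $\chi(1,1)=\dim_k\Hom_{\calD_c(\underline k)}(k,k)=1\neq 0$, so the unit is not numerically trivial either. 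Therefore both the nilpotence ideal and the numerical ideal in ${K_0(\underline k)}_{\bbQ}$ are zero, and the quotients agree, both being $\bbQ$.

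The only thing that requires a little care — and the step I expect to be the main obstacle — is the compatibility claim in the first paragraph, namely that forming the quotient by the $\otimes$-nilpotent ideal commutes with passing to a direct summand of a noncommutative motive. The issue is that a $\otimes$-nilpotent element of $K_0(\cA)$, when viewed inside $K_0(\cB)$ via the split inclusion, must still be $\otimes$-nilpotent there, and conversely a $\otimes$-nilpotent element of $K_0(\cB)$ lying in the summand $K_0(\cA)$ must be $\otimes$-nilpotent as an element of $K_0(\cA)$; the latter direction uses that the projector $\cU(\cB)\to\cU(\cA)$ is a monoidal-functorial retraction only up to the idempotent, so one argues instead at the level of the morphisms $\bfone\to\cU(\cA)\hookrightarrow\cU(\cB)$ and their tensor powers, noting $x^{\otimes m}=0$ in $\Hom(\bfone,\cU(\cA)^{\otimes m})$ if and only if $x^{\otimes m}=0$ after the split inclusion into $\Hom(\bfone,\cU(\cB)^{\otimes m})$. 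The same bookkeeping works verbatim for $\chi$ since $\chi$ is computed via the Euler characteristic of a graded Hom, which is additive on direct sums. Once this is in place the proof is complete.
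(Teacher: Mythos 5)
Your proposal is correct and follows essentially the same route as the paper: reformulate both sides via $K_0(\cA)_{\#}\simeq\Hom_{\NMM_{k,\#}}(\bfone,\cU(\cA))$, check compatibility with direct summands, reduce to $\cA=\underline{k}$ by Theorem~\ref{thm:motivic homotopy type of saturated dg-category}, and settle the base case using $K_0(k)=\bbZ$. The compatibility step you flag as the main obstacle is exactly what the paper dispatches with the equality $\mathrm{(\ref{eq:ideal and finite sum})}$ of Lemma~\ref{lem:quotient category}~(i), i.e.\ the decomposition of the ideals $\Nil$ and $\Num$ over finite direct sums, so your more explicit bookkeeping with split inclusions and tensor powers is just an unpacking of that lemma.
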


As in \cite[2.9]{Tat94}, 
the Beilinson conjecture and 
the ($p$-adic) Tate conjecture 
implies the following strong Tate conjecture. 

\begin{cor}[\bf Strong Tate conjecture]
\label{cor:strong Tate conjecture}
Let $X$ be a projective and smooth scheme $X$ over a finite field $k$. 
Then the order of the pole of the Hasse-Weil zeta function $\zeta(x,s)$ of $X$ 
at $-i$ is equal to the dimension of $\CH^i(X)_{\bbQ,\num}$ for $0\leq i\leq \dim X$. 
\qed
\end{cor}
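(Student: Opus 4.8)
The plan is to deduce the statement from results already established in this paper, following Tate's analysis in \cite[2.9]{Tat94}. The three inputs I would assemble are: the Tate conjecture for $X\times_k\bar k$ and every prime $\ell$ prime to the characteristic, supplied by Corollary~\ref{cor;Tate conjecture} (whose formulation, Conjecture~\ref{con:Tate conjecture}, is already phrased over $\bar k$; applying it also over each $\bbF_{q^n}$ accounts for all Tate twists); the $p$-adic Tate conjecture for $X$, supplied by Corollary~\ref{cor:noncommutative padic Tate conjecture} for $\cA=\Perf_{\dg}X$ together with Tabuada's comparison \cite[1.3]{Tab18}; and the classical Beilinson conjecture for $X$, i.e.\ that numerical equivalence coincides with $\ell$-adic homological equivalence on $\CH^\ast(X)_{\bbQ}$. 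Granting these, \cite[2.9]{Tat94} both forces the semisimplicity of the Frobenius action on $H^{2i}_{\etale}(X\times_k\bar k,\bbQ_\ell)$ and identifies, for $0\le i\le \dim X$, the order of the pole of the Hasse--Weil zeta function with $\dim_{\bbQ}\CH^i(X)_{\bbQ,\num}$, which is the assertion.

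It remains to produce the Beilinson input from the noncommutative Beilinson conjecture. By \cite[3.27]{TV07} the dg-category $\Perf_{\dg}X$ is saturated, so Corollary~\ref{cor:noncommutative Beilinson conjecture} gives $K_0(\Perf_{\dg}X)_{\num}=K_0(\Perf_{\dg}X)_{\bbQ}$. Under the Chern character isomorphism $K_0(X)_{\bbQ}\isoto\bigoplus_i\CH^i(X)_{\bbQ}$ and Hirzebruch--Riemann--Roch, the Euler form $\chi$ on $K_0(\Perf_{\dg}X)$ corresponds to the intersection pairing on $\CH^\ast(X)_{\bbQ}$ twisted by the Todd class $\mathrm{Td}(X)$ and a sign $(-1)^i$ on the degree-$i$ piece; since $\mathrm{Td}(X)$ is a unit in $\CH^\ast(X)_{\bbQ}$ and the sign only reindexes summands, the left kernel of $\chi$ goes over to the numerically trivial cycles degree by degree, whence $\CH^i(X)_{\bbQ,\num}=\CH^i(X)_{\bbQ}$ for all $i$. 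As numerical equivalence is the coarsest adequate equivalence relation on cycles, $\ell$-adic homological equivalence is squeezed between rational and numerical equivalence, hence also coincides with the full rational Chow group; this is exactly the Beilinson input needed above.

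Modulo the cited results this is bookkeeping, but two points deserve care rather than being routine. First, one must check that the numerical equivalence defined through the bilinear form $\chi$ on $K_0(\cA)$ genuinely matches classical numerical equivalence of algebraic cycles; this is the Todd-twist comparison of the previous paragraph, and the essential observation is that the twisting factor is a unit. Second, Corollary~\ref{cor;Tate conjecture} is stated for a single prime $\ell$ over a countable perfect base, and upgrading it to the $\ell$-uniform form used in \cite[2.9]{Tat94} should be done by invoking the $\ell$-independence of the characteristic polynomial of Frobenius on the pure cohomology $H^{2i}_{\etale}(X\times_k\bar k,\bbQ_\ell)$, which holds by purity. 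Finally, the pole at $-i$ in the statement and the customary pole at $i$ of the zeta function differ only by normalization, so no separate argument is needed there. The genuine content of the corollary is of course absorbed into Theorem~\ref{thm:motivic homotopy type of saturated dg-category} and the corollaries derived from it; the present step is purely the formal assembly carried out in \cite[2.9]{Tat94}.
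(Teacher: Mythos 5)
Your argument is exactly the one the paper intends: the paper's entire justification is the single sentence that, as in \cite[2.9]{Tat94}, the Beilinson conjecture together with the ($\ell$-adic and $p$-adic) Tate conjectures implies the strong form, all of these being supplied by the earlier corollaries. Your version merely fills in the bookkeeping the paper leaves implicit (the Todd-twist comparison between the Euler form on $K_0$ and numerical equivalence of cycles, and the $\ell$-uniformity), so it is the same proof, just written out.
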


By the work of Bernardara, Marcolli and Tabuada \cite[1.1]{BMT18}, 
noncommutative nilpotence conjecture 
implies the original nilpotence conjecture and as in \cite[p.194]{Voe95}, 
nilpotence conjecture implies 
the standard conjecture D and 
it might be well-known that the standard conjecture D 
implies the standard conjecture B 
(see for example \cite[5.1]{Kle94}). 
Thus finally we obtain the following:

\begin{cor}[\bf Standard conjecture]
\label{cor:Standard conjecture}
Grothendieck's standard conjectures B and D are true. 
\qed
\end{cor}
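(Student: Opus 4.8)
The plan is to obtain both standard conjectures as purely formal consequences of the noncommutative Voevodsky nilpotence conjecture (Corollary~\ref{cor:noncommutative Voevodsky conjecture}), whose own proof has already absorbed the only substantive input, namely the motivic Bass conjecture (Theorem~\ref{thm:motivic homotopy type of saturated dg-category}) and the reduction to the trivial case $\cA=\underline{k}$. So the task here is not to prove anything new but to run the noncommutative-to-classical translation and then cite the classical chain of implications $\text{nilpotence}\Rightarrow D\Rightarrow B$.

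First I would specialize Corollary~\ref{cor:noncommutative Voevodsky conjecture} to $\cA=\Perf_{\dg}X$ for a smooth projective variety $X$ over a field $k$; this dg-category is saturated by \cite[3.27]{TV07}, so the corollary gives ${K_0(\Perf_{\dg}X)}_{\nil}={K_0(\Perf_{\dg}X)}_{\num}$. Using the identification $K_0(\Perf_{\dg}X)\simeq\Hom_{\NMM_k}(\bfone,\cU(\Perf_{\dg}X))$ together with Tabuada's comparison functor $\Phi$ between $K$-motives and noncommutative motives, and the additivity of the Euler pairing $\chi$ with respect to the grading by codimension, one checks that the $\otimes$-nilpotence and numerical relations on $K_0(\Perf_{\dg}X)_{\bbQ}$ restrict, summand by summand, to the smash-nilpotence and numerical relations on $\bigoplus_i\CH^i(X)_{\bbQ}$. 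This matching of equivalence relations — hence the deduction of Voevodsky's nilpotence conjecture for $X$ from Corollary~\ref{cor:noncommutative Voevodsky conjecture} — is exactly the content of \cite[1.1]{BMT18}, so no independent argument is needed.

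Next I would invoke the classical implications. By \cite[p.194]{Voe95}, the nilpotence conjecture — that every numerically trivial cycle is smash-nilpotent — implies the standard conjecture $D$: a smash-nilpotent cycle has vanishing class, since the cycle class map is compatible with tensor products and no nonzero element of a $\bbQ$-vector space is tensor-nilpotent, so numerically trivial $\Rightarrow$ smash-nilpotent $\Rightarrow$ homologically trivial, whence homological and numerical equivalence coincide. Finally, Kleiman's argument \cite[5.1]{Kle94} shows $D\Rightarrow B$, the algebraicity of the Lefschetz operator $\Lambda$. Chaining these yields both $B$ and $D$, which is the assertion.

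I do not expect a genuine obstacle at this last step: all ingredients are in the literature, and the only hard point — reducing the noncommutative nilpotence conjecture to $\cA=\underline{k}$ by means of Theorem~\ref{thm:motivic homotopy type of saturated dg-category} — lies upstream. If anything, the one place demanding care is the verification that the noncommutative $\otimes$-nilpotence and numerical relations on $K_0(\Perf_{\dg}X)$ genuinely recover the classical ones grading-by-grading rather than only in total; but this is precisely what \cite[1.1]{BMT18} establishes, so for the purposes of the present corollary it can be cited rather than reproved.
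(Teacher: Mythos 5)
Your proposal is correct and follows essentially the same route as the paper: the author likewise deduces the corollary by chaining Corollary~\ref{cor:noncommutative Voevodsky conjecture} through \cite[1.1]{BMT18} (noncommutative nilpotence implies classical nilpotence), then \cite[p.194]{Voe95} (nilpotence implies $D$), then \cite[5.1]{Kle94} ($D$ implies $B$). The extra care you devote to matching the $\otimes$-nilpotence and numerical relations grading-by-grading is exactly what the paper delegates to \cite[1.1]{BMT18}, so there is no substantive difference.
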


Before showing corollaries~\ref{cor:noncommutative Beilinson conjecture} 
and \ref{cor:noncommutative Voevodsky conjecture}, 
we will recall a general theory of ideals of ringoids 
which we will use to prove corollaries. 
Let $\calR$ be a ringoid, in other words, 
a category enriched over the 
category of abelian groups. 
An {\it ideal} $\cI$ of $\calR$ is a family of subgroups $\cI(x,y)$ of 
$\Hom_{\calR}(x,y)$ for all ordered pair 
$(x,y)\in\Ob\left(\calR\times\calR\right)$ which 
satisfies the condition that 
for any objects $x$, $y$, $z$ and $u$ of $\calR$ and 
any pair of morphisms $f\colon x\to y$ and $g\colon z\to u$, 
we have an inclusion $g\cI(y,z)f\subset\cI(x,y)$. 
In this situation, we define $\calR/\cI$ to be a ringoid 
by setting $\Ob\calR/\cI:=\Ob\calR$ and 
$\Hom_{\calR/\cI}(x,y):=\Hom_{\calR}(x,y)/\cI(x,y)$ 
for any pair of objects $x$ and $y$ in $\calR$. 
Then the compositions in $\calR$ induces a composition of $\calR/\cI$. 
We call the category $\calR/\cI$ 
the {\it quotient category of $\calR$ by $\cI$}. 

From now on, we assume $\calR$ admits a unital 
symmetric monoidal structure $(\otimes,\bfone)$. 
An {\it $\otimes$-ideal} $\cI$ of $\calR$ is an ideal of $\cI$ such that 
for all triple of objects $x$, $y$ and $z$ and a morphism 
$f\colon x\to y$ in $\cI(x,y)$, 
$\id_z\otimes f$ and $f\otimes\id_z$ are also in 
$\cI(z\otimes x,z\otimes y)$ and $\cI(x\otimes z,y\otimes z)$ respectively. 
We will illustrate typical two examples of $\otimes$-ideals. 
The first one is $\Nil_{\calR}$ which is defined as follows. 
A morphism $f\colon x\to y$ in $\calR$ is {\it $\otimes$-nilpotent} if 
there exits an integer $n>0$ such that $f^{\otimes n}=0$. 
For a pair of objects $x$ and $y$ in $\calR$, 
we denote the set of all $\otimes$-nilpotent morphisms from $x$ to $y$ 
by $\Nil_{\calR}(x,y)$. 
Then we can show that the family 
$\{\Nil_{\calR}(x,y)\}_{(x,y)\Ob(\calR\times\calR)}$ is a $\otimes$-ideal.

Assume that an object $x$ in $\calR$ is dualizable. 
Then for a morphism $f\colon x\to x$, we define $\tr(f)$ 
to be a morphism $\bfone \to \bfone$ by 
compositions $\bfone\onto{\delta}x^{\vee}\otimes x\simeq x\otimes x^{\vee} 
\onto{f\otimes \id_{x^{\vee}}}x\otimes x^{\vee} \onto{ev}\bfone$ and 
we call it the ({\it categorical}) {\it trace of $f$}. 
We say that 
a morphism $f\colon x\to y$ is {\it numerically equivalent to zero} if 
for any morphism $g\colon y\to x$, $\tr(gf)=0$ and we denote the 
set of all morphisms from $x$ to $y$ which are numerically equivalent to zero 
by $\Num_{\calR}(x,y)$. 
Moreover if we assume that all objects in $\calR$ are dualizable, then 
the family $\{\Num_{\calR}(x,y)\}_{(x,y)\in\Ob(\calR\times\calR)}$ 
forms a $\otimes$-ideal by \cite[7.1.1]{AKO02}. 
If $\Hom_{\calR}(\bfone,\bfone)$ is a field, then we have an inclusion 
$\Nil_{\calR}(x,y)\subset \Num_{\calR}(x,y)$ 
for any pair of objects $x$ and $y$ in $\calR$ 
by \cite[7.1.4]{AKO02}. 
We write $\NMM_{A,\nil}$ and $\NMM_{A,\num}$ for 
the quotient categories ${(\NMM_A)}_{\bbQ}/\Nil_{{(\NMM_A)}_{\bbQ}}$ and 
${(\NMM_A)}_{\bbQ}/\Num_{{(\NMM_A)}_{\bbQ}}$ respectively. 
These notions are compatible with the notions in introduction. 
Namely for a saturated dg-categories $\cA$ over a field $k$, 
we have the following isomorphisms 
\begin{equation}
\label{eq:K_0(A)sharp}
K_0(\cA)_{\#}\simeq\Hom_{\NMM_{k,\#}}(\bfone,\cU(\cA))
\end{equation}
for $\#\in\{\nil,\num\}$ by \cite[4.4]{MT14}. 
Moreover $\NMM_{k,\#}$ for $\#\in\{\nil,\num \}$ are symmetric monoidal triangulated categories by Lemma~\ref{lem:quotient category} below.  

\begin{lem}
\label{lem:quotient category}
\begin{enumerate}
\enumidef
\item
Assume that $\calR$ is additive. Then 
for any pair of families of objects $\{x_i\}_{i\in I}$ and 
$\{y_j\}_{j\in J}$ indexed by 
non-empty finite sets $I$ and $J$, 
the canonical isomorphism 
$$\Hom_{\calR}(\bigoplus_{i\in I}x_i,\bigoplus_{j\in J}y_j)\isoto 
\bigoplus_{(i,j)\in I\times J}\Hom_{\calR}(x_i,y_j)$$
induces an isomorphism 
\begin{equation}
\label{eq:ideal and finite sum}
\cI(\bigoplus_{i\in I}x_i,\bigoplus_{j\in J}y_j)\isoto 
\bigoplus_{(i,j)\in I\times J}\cI(x_i,y_j).
\end{equation}
In particular $\calR/\cI$ is an additive category.

\item
Moreover assume that $\calR$ is a triangulated category. 
Then we can make $\calR/\cI$ into a triangulated category where 
a triangle in $\calR/\cI$ is distinguished if and only if it 
is isomorphic to a distinguished triangle in $\calR$. 

\item
{\rm(\cite[6.1.2]{AKO02}).}\ 
If $\calR$ is a symmetric monoidal category and $\cI$ is a $\otimes$-ideal, 
then the symmetric monoidal structure on $\calR$ makes $\calR/\cI$ into 
a symmetric monoidal category. 
\end{enumerate}
\qed
\end{lem}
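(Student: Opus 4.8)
The plan is to treat the three parts in order of increasing difficulty. Part $\mathrm{(iii)}$ is exactly \cite[6.1.2]{AKO02}, to be quoted directly; so the real content lies in $\mathrm{(i)}$ and $\mathrm{(ii)}$.

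For $\mathrm{(i)}$, I would argue directly with the structure maps of the biproducts. Write $\iota_i\colon x_i\to\bigoplus_k x_k$ and $\pi_j\colon\bigoplus_k y_k\to y_j$ for the inclusions and projections in $\calR$, so that the canonical isomorphism carries $f$ to the family $(\pi_j f\iota_i)_{(i,j)}$ and its inverse carries $(g_{ij})_{(i,j)}$ to $\sum_{(i,j)}\iota_j g_{ij}\pi_i$. If $f\in\cI(\bigoplus_i x_i,\bigoplus_j y_j)$ then each component $\pi_j f\iota_i$ lies in $\cI(x_i,y_j)$, since an ideal is closed under pre- and post-composition by arbitrary morphisms; conversely, if each $g_{ij}\in\cI(x_i,y_j)$ then each summand $\iota_j g_{ij}\pi_i$ again lies in $\cI(\bigoplus_i x_i,\bigoplus_j y_j)$, and the finite sum stays there because $\cI(\bigoplus_i x_i,\bigoplus_j y_j)$ is a subgroup. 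This yields the isomorphism $\mathrm{(\ref{eq:ideal and finite sum})}$. Additivity of $\calR/\cI$ is then formal: its Hom-sets are quotients of abelian groups and composition descends biadditively; a zero object of $\calR$ remains a zero object; and $\mathrm{(\ref{eq:ideal and finite sum})}$ shows that a biproduct $\bigoplus_i x_i$ of $\calR$ retains the universal properties of both product and coproduct in $\calR/\cI$.

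For $\mathrm{(ii)}$, one first descends the shift: the autoequivalence $\Sigma$ (and $\Sigma^{-1}$) of $\calR$ passes to $\calR/\cI$ as soon as $\Sigma^{\pm 1}\cI(x,y)\subseteq\cI(\Sigma^{\pm 1}x,\Sigma^{\pm 1}y)$, which I would verify in the situation at hand (for the ideals $\Nil_{\calR}$ and $\Num_{\calR}$ occurring later this follows from compatibility of $\Sigma$ with $\otimes$ and, for $\Num_{\calR}$, from the behaviour of the categorical trace under suspension in a category satisfying May's axioms). Writing $Q\colon\calR\to\calR/\cI$ for the quotient functor, declare a triangle of $\calR/\cI$ distinguished exactly when it is isomorphic in $\calR/\cI$ to $Q(T)$ for some distinguished triangle $T$ of $\calR$; this class is visibly closed under isomorphism and contains the trivial triangles. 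Axioms $\mathrm{(TR1)}$ and $\mathrm{(TR2)}$ are then immediate, by lifting objects and morphisms to $\calR$, completing or rotating there, and applying $Q$; the octahedral axiom is handled the same way, being an existence statement that can be lifted to $\calR$ and pushed down through $Q$.

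The genuine obstacle is $\mathrm{(TR3)}$, the extension of a commutative square to a morphism of triangles, because $Q$ only reflects commutativity up to $\cI$. Given distinguished triangles $Q(T_1),Q(T_2)$ of $\calR/\cI$ with $T_i\colon X_i\onto{u_i}Y_i\onto{v_i}Z_i\onto{w_i}\Sigma X_i$ distinguished in $\calR$, and $\bar a\colon X_1\to X_2$, $\bar b\colon Y_1\to Y_2$ with $\overline{u_2}\,\bar a=\bar b\,\overline{u_1}$, I would lift $\bar a,\bar b$ to $a,b$ in $\calR$ and set $\varepsilon:=u_2a-bu_1\in\cI(X_1,Y_2)$. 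The aim is to correct the lifts to $a+\delta_X$, $b+\delta_Y$ with $\delta_X\in\cI(X_1,X_2)$ and $\delta_Y\in\cI(Y_1,Y_2)$ so that the square commutes in $\calR$ on the nose; then $\mathrm{(TR3)}$ in $\calR$ supplies the third morphism, and $Q$ of the resulting morphism of triangles is the one sought. Such a correction exists precisely when $\varepsilon$ lies in $u_2\cdot\cI(X_1,X_2)+\cI(Y_1,Y_2)\cdot u_1$, equivalently when the sequences obtained by applying $\Hom(-,-)$ to $T_1$ and $T_2$ stay exact after passing to $\cI$-quotients of the relevant Hom-groups. This exactness is where the hypotheses on $\cI$ — its stability under $\Sigma^{\pm 1}$, and, in the applications, its being a $\otimes$-ideal in a symmetric monoidal triangulated category all of whose objects are dualizable — must really be used, and it is the step I expect to require the most care. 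Once $\mathrm{(TR3)}$ is in hand, checking the remaining compatibilities and well-definedness of the triangulation is routine, and part $\mathrm{(iii)}$ applies without further change.
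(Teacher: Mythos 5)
Parts $\mathrm{(i)}$ and $\mathrm{(iii)}$ of your proposal are fine: the biproduct computation with the inclusions and projections is the standard complete argument for $\mathrm{(i)}$, and $\mathrm{(iii)}$ is correctly delegated to the cited result of Andr\'e--Kahn--O'Sullivan. You are also right to point out that descending the shift functor requires $\Sigma^{\pm1}\cI(x,y)\subseteq\cI(\Sigma^{\pm1}x,\Sigma^{\pm1}y)$, which is not part of the paper's definition of an ideal and must be verified separately for $\Nil$ and $\Num$.

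The genuine gap is in $\mathrm{(ii)}$, and it sits exactly where you say you expect it to: you reformulate $\mathrm{(TR3)}$ as the condition $\varepsilon=u_2a-bu_1\in u_2\cI(X_1,X_2)+\cI(Y_1,Y_2)u_1$ (equivalently, an exactness statement for the $\cI$-reduced Hom sequences), observe that this is where "the hypotheses on $\cI$ must really be used", and then stop. This step is not a formality that can be deferred: for a general ideal stable under $\Sigma^{\pm1}$ the conclusion of $\mathrm{(ii)}$ is simply false. For instance, in $D^b(\bbZ)$ let $\cI$ be the ideal of maps inducing zero on all cohomology groups; the image of the distinguished triangle $\bbZ\onto{2}\bbZ\to\bbZ/2\to\Sigma\bbZ$ has third map in $\cI$, so in any triangulated structure on the quotient the map $\bbZ\to\bbZ/2$ would have to admit a section, which it does not since $\Hom(\bbZ/2,\bbZ)=0$. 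So no argument of the shape you outline can close without invoking a genuinely special property of the ideals $\Nil_{\calR}$ and $\Num_{\calR}$ on $\NMM_{k,\bbQ}$ (or restricting the class of distinguished triangles), and you supply neither that property nor the exactness it is supposed to yield. A secondary, smaller issue: your treatment of the octahedral axiom "by lifting" also presupposes uniqueness of cones in $\calR/\cI$, which itself rests on $\mathrm{(TR3)}$, so everything in $\mathrm{(ii)}$ reduces to the one step you have left open. As it stands, $\mathrm{(ii)}$ is not proved. (For what it is worth, the paper states this lemma without proof, so there is no argument there to compare against; but the burden of the missing step is real.)
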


\begin{proof}[Proof of Corollaries~\ref{cor:noncommutative Beilinson conjecture} and \ref{cor:noncommutative Voevodsky conjecture}]
Since Corollaries~\ref{cor:noncommutative Beilinson conjecture} and 
\ref{cor:noncommutative Voevodsky conjecture} 
are equivalent to assertions 
that 
$$\Num(\bfone,\cU(\cA))_{\bbQ}=0 \text{  and  }
\Nil(\bfone,\cU(\cA))_{\bbQ}=\Num(\bfone,\cU(\cA))_{\bbQ}$$
respectively, if assertions are true for $\cA$, then 
assertions are also true for direct summands of $\cU(\cA)$ 
by the equality $\mathrm{(\ref{eq:ideal and finite sum})}$. 
Thus by Theorem~\ref{thm:motivic homotopy type of saturated dg-category}, 
we shall assume that 
$\cA=\underline{k}$ a finite dg-cell over $k$. 
Since $K_0(k)=\bbZ$, assertions for $\cA=\underline{k}$ are true. 
\end{proof}

\paragraph{Acknowledgement}
The author wish to express my deep gratitude to 
Seidai Yasuda 
for giving several comments and 
he greatly appreciate Kanetomo Sato, Kento Yamamoto, Takashi Suzuki, 
Masana Harada and Kazuya Kato for stimulative 
discussions in the early stage of the works.

\mn
SATOSHI MOCHIZUKI\\
\emph{DEPARTMENT OF MATHEMATICS,
CHUO UNIVERSITY,
BUNKYO-KU, TOKYO, JAPAN.}\\
e-mail: {\tt{mochi@gug.math.chuo-u.ac.jp}}\\

\end{document}